\setlist[itemize]{leftmargin=2.5em}
\newtheorem{theorem}{Theorem}[section]
\newtheorem{lemma}[theorem]{Lemma}
\newtheorem{claim}{Claim}[theorem]
\Crefname{claim}{Claim}{Claims} 
\newtheorem{conjecture}[theorem]{Conjecture}
\Crefname{conjecture}{Conjecture}{Conjectures}
\let\expandafter\oldproof\csname\string\proof\endcsname
\let\oldendproof\endproof
\renewenvironment{proof}[1][\proofname]{%
	\oldproof[\normalfont\bfseries #1]%
}{\oldendproof}
\newenvironment{subproof}[1][\normalfont\it\proofname]{%
	\begin{proof}[#1]%
	}{%
	\end{proof}%
}
\newcommand{\cupcup}{\cup\cdots\cup}
\newcommand{\dd}{\textquotedblleft}
\newcommand{\ee}{\textquotedblright}
\newcommand{\mac}{\mathcal}
\newcommand{\vep}{\varepsilon}
\newcommand{\vare}{\varepsilon}
\newcommand{\nin}{\notin}
\newcommand{\ind}{\mathrm{ind}}
\renewcommand{\subset}{\subseteq}
\renewcommand{\supset}{\supseteq}
\newcommand{\erh}{Erd\H{o}s-Hajnal}
\newcommand{\poly}{\operatorname{poly}}
\DeclarePairedDelimiter\abs{\lvert}{\rvert}%
\DeclarePairedDelimiter\ceil{\lceil}{\rceil}%
\DeclarePairedDelimiter\floor{\lfloor}{\rfloor}%
\newcommand{\leqnomode}{\tagsleft@true}
\newcommand{\reqnomode}{\tagsleft@false}
\newcommand{\nat}{^{\natural}}
\def\DD{\hbox{-}}
\def\LL{,\ldots,}
\begin{document}
	\title[New graphs with the Erd\H os-Hajnal property]{Induced subgraph density. IV. New graphs with the Erd\H os-Hajnal property}
	\author{Tung Nguyen}
	\address{Princeton University, Princeton, NJ 08544, USA}
	\curraddr{Mathematical Institute and Christ Church, University of Oxford, UK}
	\email{\href{mailto:tunghn@math.princeton.edu}
		{tunghn@math.princeton.edu}}
	\author{Alex Scott}
	\address{Mathematical Institute,
		University of Oxford,
		Oxford OX2 6GG, UK}
	\email{\href{mailto:alexander.scott@maths.ox.ac.uk}{alexander.scott@maths.ox.ac.uk}}
	\author{Paul Seymour}
	\address{Princeton University, Princeton, NJ 08544, USA}
	\email{\href{mailto:pds@math.princeton.edu}{pds@math.princeton.edu}}
	\thanks{The first author was supported by AFOSR grant FA9550-22-1-0234, NSF grant DMS-2154169, a Porter Ogden Jacobus Fellowship, a Titchmarsh Research Fellowship, and a Christ Church Research Centre Grant.
		The second author was supported by EPSRC grant EP/X013642/1.
		The third author was supported by AFOSR grant FA9550-22-1-0234 and NSF grant DMS-2154169.}
	\begin{abstract}
		Erd\H os and Hajnal conjectured that for every graph $H$, there exists $c>0$ such that every $H$-free graph $G$ 
		has a clique or a stable set of size at least $|G|^c$
		(a graph is $H$-free if it has no induced subgraph isomorphic to $H$).
		Alon, Pach, and Solymosi reduced the Erd\H os-Hajnal conjecture to the case when $H$ is {\em prime} (that is, $H$ cannot be obtained  
		by vertex-substitution from smaller graphs);
		but until now, it was not shown for any prime graph with more than five vertices.
		
		We will provide infinitely many prime graphs that satisfy the conjecture.
		Let $H$ be a graph with the property that for every prime induced subgraph $G'$ with $|G'|\ge 3$, $G'$ has a vertex of 
		degree one and a vertex of degree $|G'|-2$.
		We will prove that every graph $H$ with this property satisfies the  
		Erd\H os-Hajnal conjecture, and infinitely many 
		graphs with this property are prime.
		More generally, say a graph is {\em buildable} if every prime induced subgraph with at least three vertices has a 
		vertex of degree one.
		We prove that if $H_1$ and $\overline{H_2}$ are buildable, there exists $c>0$ such that 
		every  graph $G$ that is both $H_1$-free and $H_2$-free has a clique or a stable set of size at least $|G|^c$. 
		
		Our proof uses a new technique of ``iterative sparsification'', where we pass to a sequence of successively more restricted induced subgraphs.
		This approach also extends to ordered graphs and to tournaments.  For ordered graphs, we obtain a theorem which significantly extends a recent result
		of Pach and Tomon about excluding monotone paths; and for tournaments, we  obtain infinitely many new prime tournaments that satisfy the Erd\H{o}s-Hajnal conjecture (in tournament form).
		
		

	\end{abstract}
	\maketitle
	\section{Introduction}
	\label{sec:intro}
	All graphs in this paper are finite and simple.
	For a graph $G$, $\overline G$ denotes its complement, and $\abs G:=\abs{V(G)}$.
	An {\em induced subgraph} of $G$ is a graph obtained from $G$ by removing vertices.
	For a graph $H$, a {\em copy} of $H$ in $G$ is an isomorphism from $H$ to an induced subgraph of $G$;
	and $G$ is {\em $H$-free} if there is no copy of $H$ in~$G$.

	A conjecture of Erd\H{o}s and Hajnal~\cite{MR1031262,MR599767} asserts that:
	\begin{conjecture}
		\label{conj:eh}
		For every graph $H$, there exists $c>0$ such that in every $H$-free graph $G$ there is a clique or a stable set of size at least $|G|^c$.
	\end{conjecture}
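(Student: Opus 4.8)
Conjecture~\ref{conj:eh} is the full Erd\H os--Hajnal conjecture, which is open; what follows is the route I would take, together with the point at which it stalls in full generality.

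The first step is to reduce to prime $H$ using the theorem of Alon, Pach, and Solymosi, arguing by induction on $\abs H$. If $H$ is not prime, then $H$ is obtained by substituting graphs $H_0, H_1, \dots, H_k$, each on fewer vertices, into one another; by the inductive hypothesis each $H_i$ satisfies the conjecture with some exponent $c_i$, and Alon--Pach--Solymosi then produces an exponent $c>0$ for $H$ depending only on $\abs H$ and $\min_i c_i$. So it suffices to show: for every prime graph $H$ there is $c>0$ such that every $H$-free graph $G$ has a clique or stable set of size at least $\abs G^{c}$.

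For prime $H$ the plan is an \emph{iterative sparsification}. Starting from an $H$-free graph $G$, one builds a chain of induced subgraphs $G = G_0 \supseteq G_1 \supseteq \cdots \supseteq G_m$ with $\abs{G_{i+1}} \ge \abs{G_i}^{1-o(1)}$ and $m$ bounded in terms of $\abs H$, where each $G_{i+1}$ is ``more restricted'' than $G_i$ with respect to a complexity measure read off from the structure of $H$. The intended mechanism is that a vertex of $H$ of extreme degree (degree one, or degree $\abs H - 2$, the latter handled in the complement) yields a pendant-type configuration which can be grown into a copy of $H$ one vertex at a time; so $H$-freeness forces, at each round, either that such configurations are sparse --- permitting a sparsification step at the cost of a factor $\abs G^{o(1)}$ --- or that we have already entered the structured regime. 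At the end $G_m$ should be forced into the polynomial R\"odl configuration conjectured by Fox and Sudakov, namely a partition into two large parts with density near $0$ or near $1$ between them, from which one further application within a part followed by the induction delivers the required clique or stable set. The crux is to choose the complexity measure and the peeling rule so that they genuinely respect the forbidden graph $H$ while costing only a $\abs G^{o(1)}$ factor per step.

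The main obstacle --- and precisely why the conjecture as stated remains open --- is making the peeling rule work for an \emph{arbitrary} prime $H$. At present a suitable complexity measure is available only when every prime induced subgraph of $H$ on at least three vertices has a vertex of degree one --- when $H$ is, in the terminology of this paper, ``buildable'' --- or when its complement is: the degree-one vertex is exactly the inductive handle the sparsification needs, since it can be attached last and controlled through the earlier subgraphs. Under that hypothesis, and symmetrically for pairs $H_1, H_2$ with $H_1$ and $\overline{H_2}$ buildable, the scheme can be carried through, which is the content of the main theorems; a measure valid for every prime graph appears to require a genuinely new idea and is, in my assessment, beyond the present techniques.
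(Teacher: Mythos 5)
You are right that this statement is the full Erd\H{o}s--Hajnal conjecture, which the paper states only as a conjecture and does not prove; no proof exists to compare against, and declining to supply one is the correct answer. Your account of the partial progress --- reduction to prime $H$ via Alon--Pach--Solymosi, then iterative sparsification hinging on a degree-one vertex in every non-trivial prime induced subgraph (the ``buildable'' case, and its pair version) --- accurately matches what the paper actually establishes and where its method stops.
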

	
	A graph $H$ satisfying this conjecture is said to have the {\em \erh{} property}.
	Despite considerable attention (see, for example,~\cite{MR3150572,MR1425208} for surveys), the Erd\H os-Hajnal conjecture 
	remains open.  Erd\H os and Hajnal \cite{MR1031262} proved that $H$-free graphs contain cliques or stable sets of size $\exp(c_H\sqrt{\log n})$, and recently, this was improved to $\exp(c_H\sqrt{\log n\log\log n})$ \cite{density1}. But no better general bound is known.
	
	An important result of Alon, Pach, and Solymosi shows that 
	the class of graphs $H$ satisfying \cref{conj:eh} has a certain closure property.
	Given two graphs $H_1,H_2$ and a vertex $v\in V(H_1)$, the graph obtained {\em from $H_1$ by substituting $H_2$ 
		for $v$} is formed by taking the disjoint union of $H_1\setminus \{v\}$ and $H_2$, and then adding edges to make every 
	vertex of $H_2$ adjacent to all the neighbours of $v$ in $H_1$. This operation is {\em vertex-substitution}.  Alon, Pach, and Solymosi~\cite{MR1832443} showed the following:
	\begin{theorem}
		\label{thm:aps}
		Let $H_1,H_2$ have the \erh{} property,
		and let $H$ be obtained from $H_1$ by substituting $H_2$ for a vertex of $H_1$.
		Then $H$ has the \erh{} property.
	\end{theorem}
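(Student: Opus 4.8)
The plan is to establish a \emph{local dichotomy} for an arbitrary $H$-free graph $G$: either $G$ has a large induced subgraph that is $H_1$-free, or $G$ has a large induced subgraph that is $H_2$-free. Granting this, the \erh{} property of $H$ follows at once by invoking the \erh{} property of $H_1$, respectively $H_2$, on the appropriate side. We may assume $\abs{H_1},\abs{H_2}\ge 2$, since otherwise $H\in\{H_1,H_2\}$. Write $p:=\abs{H_1}$, let $v\in V(H_1)$ be the substituted vertex, and let $c_1,c_2>0$ be \erh{} exponents for $H_1$ and $H_2$. Let $G$ be $H$-free with $\abs G=n$. For each copy $\phi$ of $H_1\setminus\{v\}$ in $G$ (an isomorphism onto an induced subgraph $G[S_\phi]$), define the \emph{substitution set}
\[
Z(\phi):=\bigl\{\,z\in V(G)\setminus S_\phi:\ z\text{ is adjacent to }\phi(u)\iff uv\in E(H_1),\ \text{for every }u\in V(H_1)\setminus\{v\}\,\bigr\}.
\]
The crucial observation is that $G[Z(\phi)]$ is $H_2$-free: an induced copy of $H_2$ with vertex set $Y\subseteq Z(\phi)$ would satisfy $Y\cap S_\phi=\varnothing$, and then $G[S_\phi\cup Y]$ would be isomorphic to $H=H_1[v\to H_2]$, contradicting that $G$ is $H$-free.

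Now fix a threshold $t$. If some copy $\phi$ of $H_1\setminus\{v\}$ has $\abs{Z(\phi)}\ge t$, then $G[Z(\phi)]$ is an $H_2$-free induced subgraph on at least $t$ vertices, hence contains a clique or stable set of size $\ge t^{c_2}$. Otherwise $\abs{Z(\phi)}<t$ for every such $\phi$; since every subset of $V(G)$ inducing $H_1$ arises from some copy $\phi$ of $H_1\setminus\{v\}$ by adjoining a vertex of $Z(\phi)$, and there are at most $n^{p-1}$ copies of $H_1\setminus\{v\}$, there are fewer than $n^{p-1}t$ subsets of $V(G)$ that induce $H_1$. A standard deletion argument — pass to a uniformly random induced subgraph on about $s$ vertices, delete one vertex from each surviving induced copy of $H_1$, and optimise $s$ — then yields an $H_1$-free induced subgraph of $G$ on $\Omega\!\bigl((n^p/(n^{p-1}t))^{1/(p-1)}\bigr)=\Omega\!\bigl((n/t)^{1/(p-1)}\bigr)$ vertices, which contains a clique or stable set of size $\Omega\!\bigl((n/t)^{c_1/(p-1)}\bigr)$.

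It remains to balance the two cases. Taking $t=\lceil n^{1-\eta}\rceil$, the first case gives a clique or stable set of size $\ge n^{(1-\eta)c_2}$ and the second one of size $n^{\eta c_1/(p-1)-o(1)}$; choosing $\eta=\tfrac{(p-1)c_2}{c_1+(p-1)c_2}$ equates the exponents, so $H$ has the \erh{} property with $c=\tfrac{c_1c_2}{c_1+(p-1)c_2}>0$ (the $o(1)$ and the behaviour for bounded $n$ are absorbed into a slightly smaller constant, as usual). I expect the two places needing the most care to be the recombination observation — one must exclude $S_\phi$ from $Z(\phi)$ to prevent a degenerate overlap when $H_1$ has twins, and account for automorphisms of $H_1\setminus\{v\}$ when counting the subsets that induce $H_1$ — and the deletion estimate, where one checks that the optimal $s$ may be taken at most $n$ and that the extracted subgraph is genuinely $H_1$-free and of polynomial size. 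Neither is deep; the real content is packaging these two estimates so that the final exponent $c$ comes out positive.
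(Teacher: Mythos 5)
Your argument is correct, and it is essentially the original Alon--Pach--Solymosi argument: the dichotomy between (a) some copy $\phi$ of $H_1\setminus\{v\}$ whose substitution set $Z(\phi)$ is large, in which case $G[Z(\phi)]$ is $H_2$-free by the recombination observation, and (b) few induced copies of $H_1$, in which case random sparsification plus deletion yields a large $H_1$-free induced subgraph, is exactly how \cref{thm:aps} is proved in the cited source; the balancing of $t$ and the resulting exponent $c_1c_2/(c_1+(p-1)c_2)$ are also standard. Note, however, that the present paper does not reprove \cref{thm:aps} at all --- it cites it --- and its internal analogue is \cref{lem:sub}, the ``viral'' version for families. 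That proof keeps your counting core (copies $\varphi$ of $H_1\setminus v$, their extension sets $I_\varphi$, the set $A$ of vertices extending $\varphi$, and the recombination of a copy of $H_2$ inside $G[A]$ with $\varphi$ into a copy of $H$), but it runs in the opposite logical direction and needs no deletion step: assuming there is no large $\vep$-restricted set, one first gets many copies of $H_1$, hence many $\varphi$ with $\abs{I_\varphi}$ large; for each such $\varphi$ the set $A$ again has no large restricted subset, so by the viral hypothesis on $\mac F\cup\{H_2\}$ it contains many copies of $H_2$, and recombining produces more than $(\vep^d\abs G)^{\abs H}$ copies of $H$, contradicting the copy-count hypothesis. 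Your route buys an explicit \erh{} exponent for the clique/stable-set conclusion; the paper's route buys the quantitative strengthening (only ``few copies of $H$'' is assumed, and the output is an $\vep$-restricted set with polynomial dependence on $\vep$), which is the form of the statement that the paper's induction actually requires. The two points you flag (excluding $S_\phi$ from $Z(\phi)$, and counting subsets versus isomorphisms) are handled correctly in your sketch; neither affects the conclusion.
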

	A graph $H$ is {\em prime}
	if it cannot be constructed by vertex-substitution from two graphs both with fewer vertices.
	Equivalently, $H$ is prime if there is no $S\subset V(H)$ with $1<\abs S<\abs H$ such that all vertices in $S$ have 
	the same neighbourhood in $V(H)\setminus S$. All graphs with at most two vertices are prime;
	let us say a graph is {\em non-trivial} if it has at least three vertices. 
	
	In view of \cref{thm:aps}, it is enough to show that every prime graph satisfies the Erd\H{o}s-Hajnal conjecture.  However, until now the only non-trivial prime graphs known to have the \erh{} property were the 
	the four-vertex path $P_4$, the
	bull
	(obtained from $P_4$ by adding a new vertex adjacent its two middle vertices)~\cite{MR2462320},
	and the five-vertex cycle $C_5$~\cite{MR4563865}. 
	The five-vertex path $P_5$ is in process of being added to this list~\cite{density7}
	(and see also~\cite{bb2022, density5} for some other recent progress).
	
	In this paper, we will give infinitely many prime graphs that all have the \erh{} property (for example, the graphs of 
	\cref{fig:smallest}; the lower half of \cref{fig:construction} shows a more complicated example).
	The simplest version of our main result is the
	following (we prove stronger versions later):
	\begin{theorem}\label{mainthm0}
		Let $H$ be a graph, such that every prime induced subgraph $H'$ with at least three vertices has both a vertex of degree one 
		and a vertex of degree $|H'|-2$ (and so degree one in the complement). Then $H$ has the \erh{} property.
	\end{theorem}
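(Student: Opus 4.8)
The plan is to prove a stronger, more flexible statement by induction and then specialize. Since the hypothesis on $H$ concerns its prime induced subgraphs, and primeness interacts well with vertex-substitution (Theorem~\ref{thm:aps}), the natural strategy is: if $H$ itself is not prime, then $H$ is built by vertex-substitution from strictly smaller graphs $H_1,H_2$; each $H_i$ inherits the hypothesis (every prime induced subgraph of $H_i$ is a prime induced subgraph of $H$), so by induction each $H_i$ has the \erh{} property, and then Theorem~\ref{thm:aps} finishes. Thus the entire content is the \emph{prime} case: we must show that every prime non-trivial graph $H$ possessing a degree-one vertex $a$ and a degree-$(|H|-2)$ vertex $b$ has the \erh{} property. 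Here $a\ne b$ since their degrees differ once $|H|\ge 3$. (One should also handle the trivial base cases $|H|\le 2$ separately, where the conclusion is immediate.)

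For the prime case the idea is the ``iterative sparsification'' advertised in the abstract. Let $a$ be the vertex of degree $1$, with unique neighbour $p$, and let $b$ be the vertex of degree $|H|-2$, non-adjacent to a unique vertex $q$. Write $H_0 := H\setminus\{a,b\}$. The plan is: given an $H$-free graph $G$ (large, say $|G|=n$), we want to find in $G$ a clique or stable set of polynomial size. I would argue that $G$ cannot contain, in a ``robust'' way, a copy of $H_0$ together with enough room around it to extend to a copy of $H$. Concretely, one sets up a weighted/fractional count: if $G$ had many copies of $H$-minus-$a$ (i.e.\ $H\setminus a$, which still has the degree-$(|H|-2)$ vertex $b$) each with a large common neighbourhood avoiding one vertex, one could locate the missing pendant $a$ and derive a copy of $H$ — contradiction. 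This is where we pass to a smaller induced subgraph $G_1\subseteq G$, with $|G_1|\ge n^{1-o(1)}$ or $|G_1|\ge \delta n$, in which $H\setminus a$ is ``rare'' or absent in the relevant robust sense; then repeat, peeling off the degree-$(|H|-2)$ vertex $b$ analogously (working in the complement, where $b$ becomes a pendant), to kill $H\setminus\{a,b\}=H_0$. After finitely many sparsification steps we reach an induced subgraph $G'$ of $G$ with $|G'|$ still polynomial in $n$ that is $H_0$-free; but $H_0$ has fewer vertices than $H$, so if we have arranged the induction to run over \emph{all} graphs satisfying the hypothesis (equivalently, we set up a suitable induction on $|H|$ where the peeled graph $H_0$ also satisfies a version of the hypothesis — this is the point that needs care, since $H_0$ need not itself satisfy the pendant/co-pendant condition, so one really wants the stronger two-sided ``buildable'' framework from the abstract), $G'$ has a clique or stable set of size $|G'|^{c_0}\ge n^{c}$, and we are done.

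The main obstacle, and the step I expect to be genuinely hard, is making the sparsification quantitative: each peeling step must shrink $n$ by at most a polynomially bounded factor (or a subpolynomial factor repeated boundedly often), and one must show that the failure to sparsify produces an honest induced copy of $H$. This requires the right notion of ``many robust copies of $H\setminus a$'': a copy $\phi$ of $H\setminus a$ such that the set of vertices adjacent to $\phi(p)$ and non-adjacent to all of $\phi(V(H)\setminus\{a,p\})$ is large. A Ramsey/pigeonhole or dependent-random-choice argument should show that if most vertex-sets of the appropriate size host such a robust copy, then iterating the extension actually embeds $a$, contradicting $H$-freeness; and if not, a large induced subgraph avoids robust copies, after which a cleaning step (again losing only a polynomial factor) removes \emph{all} copies of $H\setminus a$. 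Balancing these losses across the $O(|H|)$ peeling steps so that the final exponent $c$ is a positive constant depending only on $H$ — and correctly tracking which reduced graph one is left with so the induction closes — is the technical heart of the argument. A secondary subtlety is that the two operations (peeling the pendant $a$ in $G$, peeling the co-pendant $b$ in $\overline G$) must be interleaved and shown to commute up to the bookkeeping, which is exactly why the cleaner inductive hypothesis is the ``$H_1$ and $\overline{H_2}$ buildable'' statement rather than the symmetric one-graph version stated here.
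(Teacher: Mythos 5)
Your high-level strategy is the right one and matches the paper's: reduce to the prime case via substitution (the paper's \cref{lem:sub}, the viral analogue of \cref{thm:aps}), then exploit the pendant vertex of $H$ in the sparse regime and the co-pendant in the dense regime, iterating a sparsification step, and you correctly sense that the clean induction is the two-graph ``buildable pair'' statement rather than the symmetric one. But there is a genuine gap at the quantitative heart of the argument, exactly where you flag difficulty. Your plan is to pass, with only polynomial loss, to an induced subgraph that is \emph{entirely} free of $H\setminus a$ (your ``cleaning step''), and eventually to an $H_0$-free subgraph to which the inductive Erd\H{o}s--Hajnal property is applied. This cannot work as stated: when the sparsification step fails to produce a sparse pair, what you can conclude is only that some large subset contains \emph{few} copies of $H\setminus a$ (in the paper, at most $y^{a-2}|S|^{h-1}$ in \cref{lem:sparse1,lem:sparse2}), and no polynomial-loss cleaning can upgrade ``few copies'' to ``no copies'': a random or greedy deletion destroying all of $(\delta m)^{h-1}$ copies in an $m$-vertex set leaves only a set of essentially constant size unless the copy count is far smaller. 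Consequently the inductive hypothesis must itself tolerate a bounded number of copies and must output an $\vep$-restricted (sparse or dense) subset of polynomial size rather than a clique or stable set --- this is the ``viral'' strengthening (\cref{conj:viral}), and the paper is explicit that even the polynomial R\"odl statement \cref{conj:polyrodl}, let alone the bare Erd\H{o}s--Hajnal property, is not a strong enough induction hypothesis. Your proposal, which inducts on the EH property and on exact $H_0$-freeness, therefore does not close.

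A second, smaller divergence: in the paper the peeling never reaches $H_0=H\setminus\{a,b\}$ at all. The induction (\cref{axioms}) removes one leaf at a time and keeps the \emph{pair}: one assumes $\{H',\overline J\}$ and $\{H,\overline{J'}\}$ are viral and deduces that $\{H,\overline J\}$ is, where at each stage the graph $G$ is already restricted (sparse or dense), so only one of the two leaves is ever used per stage; the two operations are not interleaved on the same graph and no commuting issue arises. The output of each stage is either a successively more restricted set $S_{i+1}$ or a long, wide $x$-sparse/dense blockade, and a separate mechanism (\cref{transfer} and \cref{thm:divisive}) converts the blockade criterion into the viral property. So beyond the cleaning step, your endgame (apply EH of $H_0$ inside an $H_0$-free $G'$) would need to be replaced by this blockade/restricted-set bookkeeping for the exponents to come out polynomial.
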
 
	This is remarkable, because until now, and despite 
	intensive effort, there were no prime graphs with more than five vertices that were known to have the \erh{} property.

	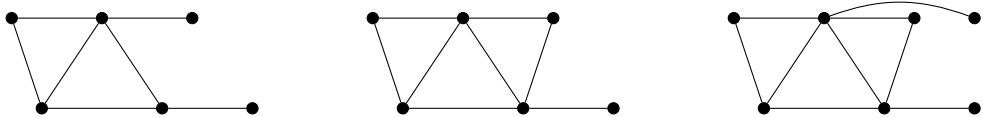
\begin{figure}[h]
		\centering
		
		\tikzstyle{every node}=[inner sep=1.5pt, fill=black,circle,draw]
		\begin{tikzpicture}[scale=0.75,auto=left]
			\node (a1) at (-4.5,1.5) {};
			\node (a2) at (-4,0) {};
			\node (a3) at (-2,0) {};
			\node (a4) at (-.5,0) {};
			\node (a5) at (-3,1.5) {};
			\node (a6) at (-1.5,1.5) {};
			\foreach \from/\to in {a1/a2, a2/a3,a3/a4,a1/a5,a2/a5,a3/a5, a5/a6}
			\draw [-] (\from) -- (\to);
			
			\tikzstyle{every node}=[inner sep=1.5pt, fill=black,circle,draw]
			\node (a1) at (1.5,1.5) {};
			\node (a2) at (2,0) {};
			\node (a3) at (4,0) {};
			\node (a4) at (4.5,1.5) {};
			\node (a5) at (3,1.5) {};
			\node (a6) at (5.5,0) {};
			\foreach \from/\to in {a1/a2, a2/a3,a3/a4,a1/a5,a2/a5,a3/a5,a4/a5, a3/a6}
			\draw [-] (\from) -- (\to);
			
			\tikzstyle{every node}=[inner sep=1.5pt, fill=black,circle,draw]
			\node (a1) at (7.5,1.5) {};
			\node (a2) at (8,0) {};
			\node (a3) at (10,0) {};
			\node (a4) at (10.5,1.5) {};
			\node (a5) at (9,1.5) {};
			\node (a6) at (11.5,0) {};
			\node (a7) at (11.5,1.5) {};
			\foreach \from/\to in {a1/a2, a2/a3,a3/a4,a1/a5,a2/a5,a3/a5,a4/a5, a3/a6}
			\draw [-] (\from) -- (\to);
			\draw [-] (a5) to [bend left=20] (a7);
		\end{tikzpicture}
		\caption{
			The two six-vertex prime graphs in $\mac H$, and one on seven vertices.
		} \label{fig:smallest}
	\end{figure}

	\begin{figure}[h]
		\centering
		
		\begin{tikzpicture}[scale=0.8,auto=left]
			\tikzstyle{every node}=[inner sep=1.5pt, fill=black,circle,draw]
			\node (b0) at (-9,3) {};
			\node (a2) at (-7.5,0) {};
			\node (b3) at (-5.5,3) {};
			\node (b4) at (-6.5,3) {};
			
			\node (a5) at (-4.5,0) {};
			\node (a6) at (-3.5,0) {};
			\node (b7) at (-1.5,3) {};
			
			\node (b8) at (-2.5,3) {};
			\node (a9) at (-.5,0) {};
			\node (a10) at (.5,0) {};
			\node (b11) at (2.5,3) {};
			\node (b12) at (1.5,3) {};
			\node (a13) at (3,0) {};

			\foreach \from/\to in {a2/b3, a2/b4,b4/a5,b4/a6,a6/b7,a6/b8,b8/a9,b8/a10,a10/b11,a10/b12,b12/a13}
			\draw [very thick] (\from) -- (\to);
			
			\tikzstyle{every node}=[]
			\draw[above] (b0) node            {$b_1$};
			\draw[above] (b3) node            {$b_4$};
			\draw[above] (b4) node            {$b_3$};
			\draw[above] (b7) node            {$b_8$};
			\draw[above] (b8) node            {$b_7$};
			\draw[above] (b11) node            {$b_{12}$};
			\draw[above] (b12) node            {$b_{11}$};
			\draw[below] (a2) node            {$a_2$};
			\draw[below] (a5) node            {$a_5$};
			\draw[below] (a6) node            {$a_6$};
			\draw[below] (a9) node            {$a_{9}$};
			\draw[below] (a10) node            {$a_{10}$};
			\draw[below] (a13) node            {$a_{13}$};

			%
			%
			
			\tikzstyle{every node}=[inner sep=1.5pt, fill=black,circle,draw]
			\node (b0) at (-9,-1) {};
			\node (a2) at (-7.5,-4) {};
			\node (b3) at (-5.5,-1) {};
			\node (b4) at (-6.5,-1) {};
			
			\node (a5) at (-4.5,-4) {};
			\node (a6) at (-3.5,-4) {};
			\node (b7) at (-1.5,-1) {};
			
			\node (b8) at (-2.5,-1) {};
			\node (a9) at (-.5,-4) {};
			\node (a10) at (.5,-4) {};
			\node (b11) at (2.5,-1) {};
			\node (b12) at (1.5,-1) {};
			\node (a13) at (3,-4) {};

			\foreach \from/\to in {a2/b3, a2/b4,b4/a5,b4/a6,a6/b7,a6/b8,b8/a9,b8/a10,a10/b11,a10/b12,b12/a13}
			\draw [very thick] (\from) -- (\to);
			\foreach \from/\to in {a2/a5,a5/a6,a6/a9,a9/a10, a10/a13}
			\draw [-] (\from) to [bend right=5] (\to);
			\foreach \from/\to in {a2/a6,a5/a9,a6/a10,a9/a13}
			\draw (\from) to [bend right=9] (\to);
			\foreach \from/\to in {a2/a9,a5/a10,a6/a13}
			\draw (\from) to [bend right=13] (\to);
			\foreach \from/\to in {a2/a10, a5/a13}
			\draw (\from) to [bend right=17] (\to);
			\foreach \from/\to in {a2/a13}
			\draw (\from) to [bend right=21] (\to);
			
			\foreach \from/\to in {b0/a5,b0/a6,b0/a9,b0/a10,b0/a13,b3/a9,b3/a10,b3/a13,
				b4/a9,b4/a10,b4/a13,b7/a13,b8/a13}
			\draw [ultra thin] (\from) -- (\to);

		\end{tikzpicture}
		\caption{
			Start with a path ($a_2\DD b_3\DD a_6 \DD b_7\DD a_{10}\DD b_{11}$ in this case), add a leaf at every vertex, add an isolated 
			vertex $b_1$, 
			and take a bipartition $(A,B)$, numbered as shown.
			Now make $A$ a clique; and make $a_i, b_j$ adjacent if $i\ge j+4$.} \label{fig:construction}
	\end{figure}
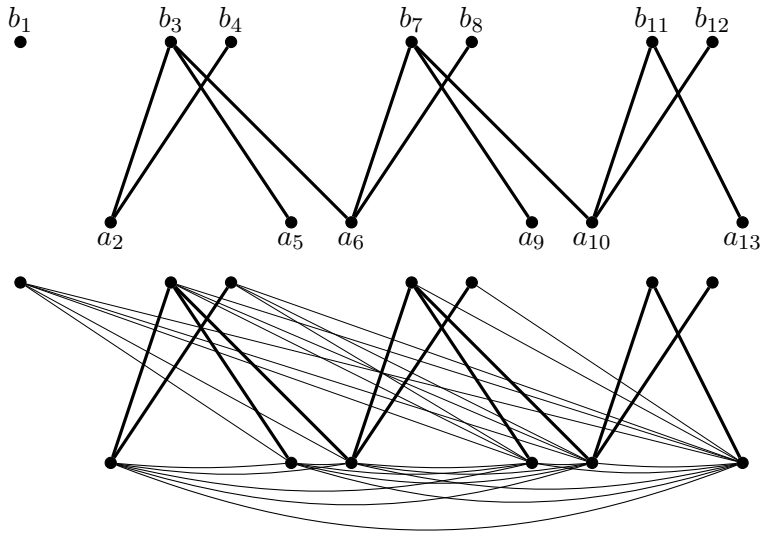
	We define $\mac H$ to be the class of all graphs that can be constructed by a sequence of the following operations, starting with one-vertex graphs:
	\begin{itemize}
		\item choose a graph $G$ that is already constructed, choose a vertex $v\in V(G)$ that has degree at least $|G|-2$,
		and add a new vertex adjacent only to $v$;
		\item choose a graph $G$ that is already constructed, choose a vertex $v\in V(G)$ that has degree at most one,
		and add a new vertex with neighbour set $V(G)\setminus \{v\}$ (this is the operation of the first bullet, in the complement graph);
		\item choose two graphs $H_1,H_2$ that are already constructed, and substitute $H_2$ for a vertex of $H_1$.
	\end{itemize}
	For instance, the graph constructed in \cref{fig:construction} belongs to $\mac H$. (Add one vertex at a time in the order 
	of the numbers.)
	Then:
	\begin{itemize}
		\item the bull and $P_4$ belong to $\mac H$;
		\item  if $H\in \mac H$, then so is its complement and all its induced subgraphs;
		\item $H\in \mac H$ if and only if every nontrivial prime induced subgraph $H'$ of $H$ has a vertex of degree one and a vertex of degree $|H'|-2$; and
		\item each prime graph $H\in\mac H$ is a {\em split graph}, that is,
		its vertex set is the union of a clique and a stable set.
	\end{itemize}
	(The first three bullets are direct consequences of the construction of $\mac H$, and the fourth bullet is the statement of \cref{lem:prime}.)
	By the definition of $\mac H$, \cref{mainthm0} says:
	\begin{theorem}
		\label{thm:eh}
		Every $H\in\mac H$ has the \erh{} property.
	\end{theorem}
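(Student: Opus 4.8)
The plan is to prove \cref{thm:eh} by induction on $|H|$, the case $|H|=1$ being trivial. So fix $H\in\mathcal H$ with $|H|\ge2$ and assume every member of $\mathcal H$ with fewer vertices has the \erh{} property. If $H$ is not prime, then $H$ can be obtained by vertex-substitution from two members of $\mathcal H$ each with fewer vertices than $H$ (an induced subgraph on a non-trivial module, and the corresponding quotient; both are induced subgraphs of $H$, hence in $\mathcal H$), so \cref{thm:aps} and the inductive hypothesis finish this case. If $H$ is prime, then, since $H\in\mathcal H$ and $|H|\ge2$, some construction of $H$ of the type in the definition ends --- after discarding vacuous substitutions of a single vertex --- with the first or the second bullet; and since both the \erh{} property and the class $\mathcal H$ are invariant under complementation, we may assume it ends with the first. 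Hence $H=G+v$, where $G\in\mathcal H$ with $|G|=|H|-1$ and $v$ is a new vertex whose only neighbour is a vertex $u$ of $G$ that is non-adjacent to at most one other vertex of $G$; write $w_0$ for that non-neighbour if it exists. Note that $G$, $G-u$, $G-w_0$ and $(G-u)\cup K_1$ all lie in $\mathcal H$ and have fewer vertices than $H$, so by induction they have the \erh{} property.

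I would first dispose of the case where $u$ is non-adjacent to \emph{no} vertex of $G$, i.e.\ where $H$ has a universal vertex $z$ (the case of an isolated vertex being symmetric). Here $H-z=(G-u)\cup K_1\in\mathcal H$ is smaller than $H$, so by induction it has the \erh{} property with some exponent $c_0>0$. The key observation is that in any $H$-free graph $G'$, and for every vertex $x$ of $G'$, the graph $G'[N_{G'}(x)]$ is $(H-z)$-free, because adjoining to a copy of $H-z$ a new vertex adjacent to all of it produces a copy of $H$. So if $|G'|=n$: either $G'$ has a vertex of degree at least $\sqrt n$, and its neighbourhood contains a clique or stable set of size $\ge n^{c_0/2}$; or $G'$ has fewer than $n^{3/2}/2$ edges and hence a stable set of size $\ge\tfrac12\sqrt n$. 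Either way $G'$ has a clique or stable set of size $\ge n^{c'}$ for some $c'=c'(H)>0$, proving this case.

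The remaining case --- and the heart of the matter --- is when $H$ has neither a universal nor an isolated vertex; then in the reduction above $u$ is adjacent to all of $G$ except exactly one vertex $w_0$, and $H$ has a near-universal vertex $u$ (of degree $|H|-2$) carrying a leaf $v$. Now the one-shot argument above fails, since a copy of $G$ in $G'$ with $u$ mapped to $u'$ has its copy of $w_0$ \emph{outside} $N_{G'}(u')$, so $G'[N_{G'}(u')]$ need not avoid any small graph. The idea is \emph{iterative sparsification}: from an $H$-free graph $G'$ one builds a descending chain $V(G')=W_0\supseteq W_1\supseteq\cdots\supseteq W_k$, where each $W_{i+1}$ is a judiciously chosen common neighbourhood or common non-neighbourhood of a bounded set of already-located vertices, with $|W_{i+1}|$ at least a fixed power of $|W_i|$, selected so that $G'[W_{i+1}]$ either already exhibits a clique or stable set of the required polynomial size, or still satisfies an ``$H$-free''-type restriction together with one further pinned-down piece of structure; after a controlled number of rounds the accumulated structure pins down the location of $w_0$, so that $u$ becomes effectively universal in the surviving instance and the easy case above applies inside $G'[W_k]$, yielding the clique or stable set in $G'$. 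For this bookkeeping to close --- in particular so that the hypothesis can be reapplied inside the shrunken subgraphs, whose orders have dropped --- it seems one must carry through the induction not bare \cref{conj:eh} but a stronger statement; following the plan announced in the abstract, I would use one of the form: for suitable $c,d>0$ depending on $H$ and every $\varepsilon\in(0,\tfrac12)$, every graph $G'$ with at most $\varepsilon^{d}|G'|^{|H|}$ copies of $H$ has a clique or stable set of size $|G'|^{c}$, or an induced subgraph on at least $\varepsilon^{d}|G'|$ vertices of edge density outside $[\varepsilon,1-\varepsilon]$ --- which simultaneously yields the polynomial R\"odl and ``few copies'' strengthenings. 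Plain \cref{thm:eh} is then recovered by letting $\varepsilon\to0$ and iterating the R\"odl conclusion in the standard way to upgrade polynomial-size almost-homogeneous sets to a genuine clique or stable set.

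The hard part is precisely this iterative sparsification: choosing at each step a restriction under which the instance genuinely simplifies rather than merely shrinks; bounding the number of rounds (or otherwise controlling the total polynomial loss) so that $|W_k|$ remains a fixed power of $|G'|$; tracking the degradation of $c$ and $d$ along the chain and through each appeal to the inductive hypothesis; and checking that each restricted subgraph really does satisfy the strengthened hypothesis for an appropriate smaller member of $\mathcal H$ --- which forces one to know that $\mathcal H$ is closed under the auxiliary moves the argument needs (disjoint union with $K_1$, substitution into $\overline{K_2}$, and the like). With this case settled, the three cases together complete the induction and prove \cref{thm:eh}.
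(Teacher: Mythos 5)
The easy parts of your plan are fine: the non-prime case via substitution, and the universal/isolated-vertex case (which in fact cannot arise for prime $H$ with at least three vertices, since a universal or isolated vertex creates a nontrivial module, so that case is vacuous rather than wrong). But the entire content of the theorem lies in your third case, and there you offer a plan rather than a proof --- as you acknowledge when you list "the hard part". Moreover, the specific mechanism you sketch is not viable as stated: restricting to common neighbourhoods or non-neighbourhoods of a bounded set of located vertices until "the accumulated structure pins down the location of $w_0$, so that $u$ becomes effectively universal" cannot work, because a copy of $H$ inside the surviving graph $G'[W_k]$ is free to place the image of $w_0$ at \emph{any} vertex of $W_k$; no bounded amount of pinning converts the instance into the universal-vertex case. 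In addition, common neighbourhoods in an arbitrary (or merely $H$-free) graph need not retain a fixed power of the vertices, so the asserted control $|W_{i+1}|\ge |W_i|^{\Theta(1)}$ has no justification without a density hypothesis that your sketch never establishes.

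For comparison, the paper's argument has a quite different engine, none of which (nor any substitute) appears in your proposal. The induction is run on \emph{pairs} of (ordered) graphs (\cref{thm:mainorderedpair}): when the leaf is deleted from $H$, the dense-side structure must be carried by the second graph, which for $H\in\mac H$ is ultimately taken to be $\overline H$; the sparse regime uses the degree-one vertex and the dense regime the degree-$(|H|-2)$ vertex. Concretely one needs: an ordered Nikiforov theorem (\cref{thm:orderedniki}) to enter an $\vep$-restricted regime; the counting lemma \cref{lem:sparse1}, which uses the leaf to produce, in a sparse host, either many copies of $H$, or a large subset with few copies of $H\setminus\{v\}$, or a sparse pair $(A,B)$, amplified into long sparse or dense blockades by \cref{lem:sparse2}; the equivalence of being divisive and being viral (\cref{thm:divisive}, via \cref{transfer}) to convert such blockades back into restricted sets; and the nested-sequence bookkeeping of \cref{axioms}, which trades sparsity against size polynomially at every level. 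Your viral-type strengthening is named but never exploited, and your single appeal to "the easy case inside $G'[W_k]$" replaces all of this machinery with a step that does not go through. So the proposal has the right top-level shape (induct, reduce to prime $H$, exploit the leaf, carry a stronger counting statement), but the core of the proof is missing, and the route you propose for that core would fail.
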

	We claim that $\mac H$ contains an infinite number of prime graphs (including the bull, but not $C_5$, and 
	unfortunately not $P_5$).
	Indeed, $\mac H$ contains a prime graph with $h$ vertices for every $h\ge 4$. To see this, observe first 
	that there are prime graphs in $\mac H$ with four vertices and five vertices
	($P_4$ and the bull).
	Second, if $H\in\mac H$ is prime, then there is a prime graph in $\mac H$ with $|H|+2$ vertices; because let $v$ be a vertex
	of $H$ with degree one, adjacent to $u$ say. Add two new vertices,
	one adjacent to all vertices of $H$, and the other just adjacent to $u$; then this enlarged graph is also prime and belongs to 
	$\mac H$. 
	
	We point out that the third bullet in the definition of $\mac H$ is not really important. If we just want to construct all the prime graphs in $\mac H$, the 
	first two bullets are enough. Note, however, that having used, say, the first bullet operation 
	on some vertex $v$, adding a new vertex $u$,
	one can then use the first bullet operation
	again on the same vertex $v$, adding a ``nonadjacent twin'' of $u$: 
	this is the same as substituting a two-vertex stable set for $u$. At that stage, the graph is not prime, but might still 
	eventually grow into a prime graph, because later steps in the growing process might restore primeness. Moreover, if we want to 
	avoid using the third bullet, then 
	this repetition of the same operation on the same vertex may be necessary (for instance, to grow the graph of 
	\cref{fig:construction}). When we come to the ``pairs of graphs'' extension of the result (Theorem \ref{thm:pairsEH}), vertex-substitution will become
	important, and it is convenient to retain it here to make that extension simpler.

	The \erh{} property of the bull was first proved by Chudnovsky and Safra~\cite{MR2462320} using the strong perfect graph theorem~\cite{MR2233847} and a decomposition theorem for bull-free graphs,
	and later reproved by Chudnovsky, Scott, Seymour, and Spirkl~\cite{MR4563865} via a different approach that simultaneously
	showed the \erh{} property of $C_5$.
	Our proof of \cref{thm:eh} gives a third proof of the \erh{} property of the bull.
	
	The result of this paper gives two prime six-vertex graphs that have the \erh{} property.
	We have been striving, for the last forty years or so (some of us, anyway) to prove that all five-vertex graphs have the 
	\erh{} property, and we have just succeeded~\cite{density7}. 
	Where are we on
	six-vertex graphs? There are ten prime six-vertex graphs that contain $P_5$ or its complement, and six that do not.
	The result of this paper handles two of the six, those in \cref{fig:smallest}. The other four, two complementary pairs, are
	shown in \cref{fig:6vertex}; and at least they have a ``near-Erd\H{o}s-Hajnal'' property~\cite[Section 8.4]{2025thes}.
	The ten that contain $P_5$ or its complement are even more challenging, since $P_5$ itself is very difficult;
	but assuming the result for $P_5$, we can handle two of the ten, using the results of this paper. We discuss this more in 
	\cref{sec:cor}.
	
	\begin{figure}[h]
		\centering
		
		\tikzstyle{every node}=[inner sep=1.5pt, fill=black,circle,draw]
		\begin{tikzpicture}[scale=0.75,auto=left]
			\node (a1) at (-2,0) {};
			\node (a2) at (-.5,0) {};
			\node (a3) at (1,0) {};
			\node (a4) at (-2,1.5) {};
			\node (a5) at (-0.5,1.5) {};
			\node (a6) at (1,1.5) {};
			
			\node (b1) at (2,0) {};
			\node (b2) at (4,0) {};
			\node (b3) at (6,0) {};
			\node (b4) at (3,1.5) {};
			\node (b5) at (5,1.5) {};
			\node (b6) at (7,1.5) {};
			
			\node (c1) at (8.4,.5) {};
			\node (c2) at (9,1.5) {};
			\node (c3) at (9.6,.5) {};
			\node (c4) at (7.6,0) {};
			\node (c5) at (9,2.4) {};
			\node (c6) at (10.4,0) {};
			
			\node (d1) at (11.4,0) {};
			\node (d2) at (13,0) {};
			\node (d3) at (14.6,0) {};
			\node (d4) at (12.2,1.3) {};
			\node (d5) at (13.8,1.3) {};
			\node (d6) at (13,2.6) {};

			\foreach \from/\to in {a1/a2, a2/a3,a4/a5,a5/a6,a2/a5,a3/a6,b1/b2,b2/b3,b4/b5,b5/b6,b1/b4,b4/b2,b2/b5,b5/b3,b3/b6, c1/c2,c1/c3,c2/c3,c1/c4,c2/c5,c3/c6, d1/d2,d2/d3,d1/d4,d2/d4,d2/d5,d3/d5,d4/d5,d4/d6,d5/d6}
			\draw [-] (\from) -- (\to);
		\end{tikzpicture}
		\caption{
			The six-vertex graphs not containing $P_5$ or $\overline{P_5}$ that remain open.
		} \label{fig:6vertex}
	\end{figure}
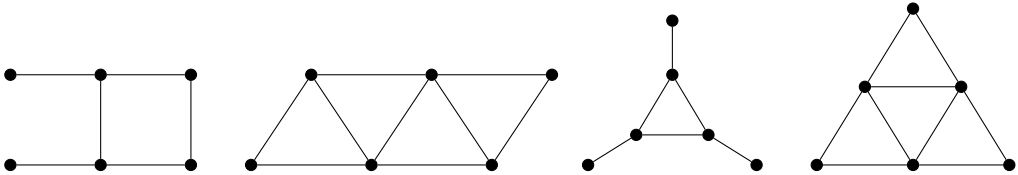
	
	We will actually prove a statement more general than \cref{thm:eh} (in fact, our proof method {\em requires} most of this additional generality). 
	Our final result is
	more general than \cref{thm:eh} in four ways:
	\begin{itemize}
		\item We will prove that these graphs satisfy a conjecture of Fox and Sudakov (explained below), not just that they have the \erh{} property.
		\item We can obtain the same conclusion under a more relaxed hypothesis that there are not many copies of $H$ in $G$. 
		(By a recent result of
		Buci\'c, Fox and Pham~\cite{bfp2024}, these first two strengthenings are now both known to follow from the fact that $H$ has the Erd\H{o}s-Hajnal property.
		However, we will not use the result of~\cite{bfp2024}.)
		\item Each prime graph in $\mathcal{H}$ has a vertex of degree one and so does its complement, and this is what we need for the inductive proof to work. 
		It is just as good, and gives a stronger theorem, if we exclude two graphs instead of one, with 
		the property
		that every non-trivial prime induced subgraph of the first has a vertex of degree one, and the same for the complement of the second.
		\item All this works just as well for ordered graphs; and we obtain consequences for ordered graphs and tournaments.
	\end{itemize}
	Let us explain these things in more detail.
	
	Throughout the paper we will need to work with graphs that are suitably dense or sparse.
	For $\vep\in(0,\frac12)$, a graph $G$ is {\em $\vep$-restricted} if one of $G,\overline G$ has maximum degree at most $\vep\abs G$.
	We say that $S\subseteq V(G)$ is {\em $\vep$-restricted} in $G$ if $G[S]$ is $\vep$-restricted where $G[S]$ is the subgraph of $G$ induced on $S$.
	An important  theorem of R\"odl~\cite{MR837962} shows that $H$-free graphs contain large $\vep$-restricted subsets for any fixed $\vep>0$:
	\begin{theorem}
		\label{thm:rodl}
		For every graph $H$ and every $\vep\in(0,\frac12)$, there exists $\delta>0$ such that for every $H$-free 
		graph $G$, there is an $\vep$-restricted subset $S\subseteq V(G)$ with size at least $\delta \abs G$.
	\end{theorem}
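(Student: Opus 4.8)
The plan is to prove first a weaker ``edge-density'' version of the statement --- that every $H$-free graph $G$ has a large subset on which $G$ or $\overline G$ has few edges --- using Szemer\'edi's regularity lemma together with a multicolour Ramsey argument, and then to upgrade ``few edges'' to ``small maximum degree'' by deleting the vertices of large degree. For the upgrade, suppose we know that for every $\varepsilon'\in(0,\tfrac12)$ there is $\delta'>0$ such that every $H$-free $G$ has $S_0\subseteq V(G)$ with $|S_0|\ge\delta'|G|$ and either $e(G[S_0])\le\varepsilon'\binom{|S_0|}{2}$ or $e(\overline G[S_0])\le\varepsilon'\binom{|S_0|}{2}$. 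In the former case, fewer than $\sqrt{\varepsilon'}\,|S_0|$ vertices of $S_0$ have $G[S_0]$-degree exceeding $\sqrt{\varepsilon'}\,|S_0|$, so discarding them leaves $S$ with $|S|\ge(1-\sqrt{\varepsilon'})|S_0|$ and maximum degree at most $\tfrac{\sqrt{\varepsilon'}}{1-\sqrt{\varepsilon'}}|S|$, which is at most $\varepsilon|S|$ once $\varepsilon'$ is small in terms of $\varepsilon$ (say $\varepsilon'=\varepsilon^2/(1+\varepsilon)^2$); the other case is handled the same way, working in $\overline G$. Taking $\delta:=\delta'/2$, and using a single vertex as $S$ when $|G|$ is below the threshold at which the regularity argument bites, it remains only to prove the edge-density version.

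To this end, fix $\varepsilon'$, write $h:=|H|$, and set $d:=\varepsilon'/3$ and $T:=\lceil 6/\varepsilon'\rceil$. Choose $\eta>0$ small enough that the embedding lemma applies with regularity parameter $\eta$ and density window $[d,1-d]$ for $h$-vertex graphs (given sufficiently large parts), and that $1/(2\eta)\ge R_3(\max(h,T))$, where $R_3(\cdot)$ is the three-colour Ramsey number. Apply Szemer\'edi's regularity lemma to an $H$-free $G$ of large order to obtain an $\eta$-regular equitable partition $V(G)=V_0\cup V_1\cupcup V_k$ with $|V_0|\le\eta|G|$, equal-sized $V_1,\dots,V_k$, and $1/\eta\le k\le M(\eta)$. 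At most $\eta\binom{k}{2}$ of the pairs $\{i,j\}$ fail to be $\eta$-regular, so the graph on $[k]$ whose edges are these ``bad'' pairs has average degree below $\eta k$, hence (Caro--Wei) an independent set $I$ with $|I|\ge 1/(2\eta)\ge R_3(\max(h,T))$ whose parts are pairwise $\eta$-regular. Colour a pair $\{i,j\}\subseteq I$ \emph{low}, \emph{high}, or \emph{mixed} according as $d(V_i,V_j)<d$, $>1-d$, or lies in $[d,1-d]$, and by Ramsey's theorem extract a monochromatic $J\subseteq I$ with $|J|=\max(h,T)$.

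A mixed $J$ is impossible: $h$ of its parts would be pairwise $\eta$-regular with densities in $[d,1-d]$ and, the parts being large, the embedding lemma would produce an induced copy of $H$ in $G$, contradicting $H$-freeness. So $J$ is low or high. Pick $J'\subseteq J$ with $|J'|=T$ and put $S_0:=\bigcup_{j\in J'}V_j$, so $|S_0|=T|V_1|\ge(1-2\eta)T|G|/k\ge\delta'|G|$ with $\delta':=(1-2\eta)T/M(\eta)>0$. If $J$ is low, the edges of $G[S_0]$ between distinct parts number at most $d\sum_{j<j'}|V_j||V_{j'}|\le\tfrac d2|S_0|^2$, and those inside a single part at most $\sum_{j\in J'}\binom{|V_j|}{2}\le\tfrac12 T(|S_0|/T)^2=|S_0|^2/(2T)$, whence $e(G[S_0])\le\tfrac12(d+1/T)|S_0|^2=\tfrac{\varepsilon'}{4}|S_0|^2\le\varepsilon'\binom{|S_0|}{2}$; if $J$ is high, the identical computation bounds $e(\overline G[S_0])$. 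Either way the hypothesis of the first paragraph is met, completing the proof.

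The delicate point is the choice of the monochromatic clique: it must be long enough that the (unavoidable) edges inside the individual parts are negligible against the between-part count --- this is why we demand $|J|\ge T$ rather than merely $|J|\ge h$ --- and simultaneously long enough to exhibit a ``mixed'' pattern on $h$ parts. It is exactly that mixed pattern, and not the ``high'' one, that $H$-freeness kills through the embedding lemma, while the two extreme colours are the useful outcomes, supplying a set that is sparse in $G$ or in $\overline G$. Everything else --- the exceptional set $V_0$, the bad pairs, the degree-cleaning --- is routine bookkeeping; the only genuine cost is that $\delta$ inherits the tower-type dependence on $\varepsilon$ and $H$ from the bound $M(\eta)$ in the regularity lemma, which is known to be necessary for this theorem.
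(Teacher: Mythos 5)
The paper does not prove this statement at all: it is quoted as R\"odl's theorem with a citation to R\"odl's paper, and the authors note that the original proof (like Nikiforov's extension) used the regularity lemma. Your argument --- regularity partition, a three-colour Ramsey step on the cluster graph in which the ``mixed'' colour is excluded by the induced embedding lemma and $H$-freeness, union of $T$ clusters of an extreme colour, then degree-cleaning to pass from small edge density to small maximum degree --- is precisely that standard proof, and it is correct, including the bookkeeping with $d=\varepsilon'/3$, $T=\lceil 6/\varepsilon'\rceil$, the Caro--Wei selection of pairwise regular clusters, and the choice $\varepsilon'=\varepsilon^2/(1+\varepsilon)^2$ for the cleaning step.
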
 
	R\"odl's theorem can be strengthened to allow a few copies of $H$.
	If $G,H$ are graphs, $\ind_H(G)$ denotes the number of copies of $H$ in $G$.
	Nikiforov~\cite{MR2271833} extended \cref{thm:rodl} as follows:
	\begin{theorem}
		\label{thm:niki}
		For every graph $H$ and every $\vep\in(0,\frac12)$, there exists $\delta>0$ such that if $G$ is a graph
		with $\ind_H(G)\le (\delta\abs G)^{\abs H}$,
		there is an $\vep$-restricted subset $S\subseteq V(G)$ with size at least $\delta \abs G$.
	\end{theorem}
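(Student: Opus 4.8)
The plan is to deduce \cref{thm:niki} from R\"odl's theorem (\cref{thm:rodl}) together with the \emph{induced removal lemma} of Alon, Fischer, Krivelevich and Szegedy: for every graph $H$ and every $\gamma>0$ there is $\beta>0$ such that every graph $G$ with $\ind_H(G)\le\beta\abs G^{\abs H}$ admits an $H$-free graph $G'$ on the same vertex set with $\abs{E(G)\triangle E(G')}\le\gamma\abs G^{2}$. Informally: for the purpose of extracting a large $\vep$-restricted set, having few copies of $H$ is almost as good as having none, and R\"odl's theorem handles ``none''.

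First I would fix $H$ and $\vep$, apply \cref{thm:rodl} with parameter $\vep/4$ to get $\delta_0>0$ such that every $H$-free graph has a $(\vep/4)$-restricted subset of size at least $\delta_0$ times its order, then choose $\gamma>0$ small in terms of $\vep$ and $\delta_0$ (hence in terms of $\vep$ and $H$), feed $\gamma$ to the induced removal lemma to obtain $\beta>0$, and finally set $\delta:=\min\{\beta^{1/\abs H},\delta_0/2\}$. Given any $G$ on $n$ vertices with $\ind_H(G)\le(\delta n)^{\abs H}\le\beta n^{\abs H}$, the removal lemma yields an $H$-free $G'$ on $V(G)$ differing from $G$ in at most $\gamma n^{2}$ pairs, and \cref{thm:rodl} applied to $G'$ yields $S\subseteq V(G)$ with $\abs S\ge\delta_0 n$ and, say, $G'[S]$ of maximum degree at most $\tfrac\vep4\abs S$ (the other case, with $\overline{G'[S]}$ sparse, being identical after complementing $G$ and $G'$).

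The remaining step is to transfer from $G'$ back to $G$. Discard from $S$ the set $B$ of vertices meeting more than $\sqrt\gamma\,n$ modified pairs; since there are at most $\gamma n^{2}$ modified pairs we have $\abs B\le2\sqrt\gamma\,n$, so for $\gamma$ small $S':=S\setminus B$ still has $\abs{S'}\ge\delta_0 n/2$ and $\abs{S'}\ge\abs S/2$. Then $G'[S']$ has maximum degree at most $\tfrac\vep4\abs S\le\tfrac\vep2\abs{S'}$, and each vertex of $S'$ lies in at most $\sqrt\gamma\,n\le\tfrac\vep2\abs{S'}$ modified pairs, so $G[S']$ has maximum degree at most $\vep\abs{S'}$. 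Hence $S'$ is $\vep$-restricted in $G$ and $\abs{S'}\ge\delta_0 n/2\ge\delta n$, which is what \cref{thm:niki} asks for.

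The entire difficulty is concentrated in the induced removal lemma, whose proof is where Szemer\'edi-type regularity is used; the rest is bookkeeping, the one genuine subtlety being that editing only $o(n^{2})$ pairs can still change the degree of a constant fraction of vertices by a lot, which is exactly why the set $B$ must be removed before $S'$ can be declared restricted. This route gives only a tower-type bound on $\delta$ in terms of $\vep$ and $\abs H$, but only existence of some $\delta>0$ is required. One could instead reprove \cref{thm:niki} by adapting the regularity-lemma proof of R\"odl's theorem directly and checking that the bounded number of copies of $H$ affects only a negligible fraction of the embeddings produced, but routing through the induced removal lemma is the cleaner option.
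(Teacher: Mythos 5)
Your proposal is correct, but note that the paper does not prove \cref{thm:niki} at all: it is quoted as a theorem of Nikiforov, whose original argument applies the regularity lemma directly to embed copies of $H$ unless a large $\vep$-restricted set exists (and Fox--Sudakov later gave a regularity-free proof with much better bounds, as the surrounding text records). Your route --- R\"odl's theorem plus the Alon--Fischer--Krivelevich--Szegedy induced removal lemma --- is a genuinely different and legitimate derivation, and the bookkeeping checks out: with $\gamma\le\min\bigl(\delta_0^2/16,\ \vep^2\delta_0^2/16\bigr)$ you indeed get $\abs B\le 2\sqrt\gamma\,n\le\delta_0 n/2$, hence $\abs{S'}\ge\max\bigl(\delta_0 n/2,\abs S/2\bigr)$, and the degree of each vertex of $S'$ in $G[S']$ is at most $(\vep/4)\abs S+\sqrt\gamma\,n\le\vep\abs{S'}$; the complement case transfers because $\overline G$ and $\overline{G'}$ differ in exactly the same pairs. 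The labelled-versus-unlabelled counting of copies is harmless since unlabelled copies are at most labelled ones, so the removal-lemma hypothesis is met. You correctly identify the one real subtlety (edits of size $o(n^2)$ can ruin many degrees, so the exceptional set $B$ must be discarded). What each approach buys: yours outsources all the analytic work to the induced removal lemma, which is conceptually clean but gives even worse (wowzer-type) dependence of $\delta$ on $\vep$ than Nikiforov's direct regularity argument, let alone the Fox--Sudakov bound $2^{-d(\log\frac1\vep)^2}$; since the theorem only asserts existence of some $\delta>0$, this is acceptable here, but it would be useless for the quantitative strengthenings (\cref{conj:viral}) that the paper is actually after.
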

	The proofs of R\"odl and Nikiforov used the regularity lemma, and gave bounds for
	$\delta^{-1}$ that were tower-type in terms of $\epsilon^{-1}$.
	Fox and Sudakov~\cite{MR2455625} gave much better bounds, using a different proof method; they proved that, in both theorems, $\delta$ can be chosen to be $2^{-d(\log\frac1\vep)^2}$
	for some $d>0$ depending only on $H$.
	They also conjectured the following ``polynomial R\"odl'' version of \cref{thm:rodl}:
	\begin{conjecture}
		\label{conj:polyrodl}
		For every graph $H$, there exists $d>0$ such that for every $\vep\in(0,\frac12)$,
		and every $H$-free graph $G$, there is an $\vep$-restricted subset of $V(G)$ with size at least $\vep^d\abs G$.
	\end{conjecture}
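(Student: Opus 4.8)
The target, \cref{conj:polyrodl}, is the full Fox--Sudakov conjecture, which remains open in general; the plan below proves it for the graphs in $\mac H$ (and, in a stronger ``excluded pair'' form, for more), and isolates where the general case resists.

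\textbf{Induction, and the substitution case.} I would induct on $|H|$, carrying the conclusion in the strengthened ``few copies of $H$'' form of \cref{thm:niki} rather than the $H$-free form; this seems unavoidable, since the inductive step applies the hypothesis to smaller graphs inside induced subgraphs on which $H$-freeness has already been destroyed. If $H = H_1[v\to H_2]$ is a proper vertex-substitution, I would prove a polynomial-loss analogue of the Alon--Pach--Solymosi reduction (\cref{thm:aps}): given a graph $G$ with few copies of $H$ and $\vep\in(0,\frac12)$, either $G$ has few copies of $H_2$, so the inductive hypothesis for $H_2$ produces the required $\vep$-restricted set, or copies of $H_2$ are abundant, in which case no copy of $H_1$ in $G$ can have its $v$-vertex expanded through an attached copy of $H_2$, so some induced subgraph of $G$ is essentially $H_1$-free and the inductive hypothesis for $H_1$ applies; the $\vep$-dependence survives this with only a polynomial loss.

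\textbf{The prime case: iterative sparsification.} The substantive case is $H$ prime. Invoking \cref{thm:rodl} once with the target $\vep$ costs a factor $2^{-\Theta((\log\frac1\vep)^2)}$, far worse than polynomial, so instead I would push the restriction parameter down geometrically, producing induced subgraphs $G = G_0 \supseteq G_1 \supseteq \cdots$ with $G_i$ being $\vep_i$-restricted, $\vep_{i+1}\le\frac12\vep_i$, and $|G_{i+1}| \ge c\,|G_i|$ for a constant $c = c(H)$, stopping once $\vep_i < \vep$; the losses then telescope to $\vep^{O(1)}$. The passage from $G_i$ to $G_{i+1}$ is where primeness is used. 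Write $H = H^-\cup\{v\}$ with $v$ a vertex of degree one (which exists because $H$ is prime and every prime graph in $\mac H$ has a vertex of degree one) and work inside the sparse side of $G_i$: one shows that either $G_i$ already contains a large induced subgraph of much smaller maximum degree, or copies of $H^-$ are abundant, and then the way $v$ attaches to its neighbour lets $H$-freeness constrain those copies enough that the inductive hypothesis for $H^-$, in its ``few copies'' form applied inside $G_i$, produces the sparser subgraph. A vertex of degree one in $\overline H$ handles the dense side symmetrically. This yields \cref{conj:polyrodl} for every $H\in\mac H$; alternating which of two excluded graphs $H_1,\overline{H_2}$ drives each step upgrades it to the stronger statement in which one excludes a pair and asks only that every non-trivial prime induced subgraph of $H_1$, respectively of $\overline{H_2}$, has a vertex of degree one.

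\textbf{Main obstacle.} The sparsification step genuinely requires a vertex of degree one -- in $H$ for the sparse side, in $\overline H$ for the dense side, and in every non-trivial prime induced subgraph on both sides so that the recursion stays inside the class: a low-degree vertex is precisely what makes $H$-freeness exploitable inside an already sparse graph, and what keeps $H^-$ small enough to induct on. For a prime graph in which neither $H$ nor $\overline H$ has such a vertex -- $C_5$, self-complementary and of minimum degree two, is the smallest, and the four six-vertex graphs of \cref{fig:6vertex} are further small examples, unsettled even for the \erh{} conjecture -- the iteration never begins, and this is exactly the point at which \cref{conj:polyrodl} stays out of reach. Since \cref{conj:polyrodl} for all prime $H$ would in particular imply the \erh{} conjecture, the honest reach of the plan is as stated: \cref{conj:polyrodl} holds for every $H\in\mac H$, and more generally for every $H$ obtained by vertex-substitution from graphs each of which, or whose complement, has a vertex of degree one in each of its non-trivial prime induced subgraphs.
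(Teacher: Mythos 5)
You are right that the statement as labelled is the open Fox--Sudakov conjecture, which the paper does not prove in general, and your account of what \emph{is} proved --- the viral (``few copies'') strengthening carried through an induction on $|H|$, the polynomial-loss substitution reduction (the paper's \cref{lem:sub}), and iterative sparsification driven by a degree-one vertex of $H$ on the sparse side and of $\overline H$ on the dense side, yielding the conjecture for $\mac H$ and the excluded-pair generalization --- is essentially the paper's own route (\cref{thm:viral}, \cref{axioms}, \cref{claim:sequence}). The only point you elide is that the paper does not pass directly from each $S_i$ to a restricted subset but instead proves the pair is ``divisive'' (producing sparse or dense blockades) and converts blockades to restricted sets via \cref{thm:divisive} and \cref{transfer}; also the per-step loss is $\poly(\vep_i)$ rather than a constant factor, though the telescoped bound is still $\vep^{O(1)}$ as you claim.
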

	Taking a fixed value of $\vep$ implies \cref{thm:rodl}.  However, the polynomial dependence allows us to take much smaller $\vep$: in particular, by taking $\vep$ to be a small negative power of $n$, it follows that \cref{conj:polyrodl}
	implies the Erd\H{o}s-Hajnal conjecture.
	Also, the following ``polynomial Nikiforov'' statement unifies \cref{thm:niki} and \cref{conj:polyrodl}.
	\begin{conjecture}
		\label{conj:viral}
		For every graph $H$, there exists $d>0$ such that for every $\vep\in(0,\frac12)$ and every graph $G$ 
		with $\ind_H(G)\le (\vep^d\abs G)^{\abs H}$, there is an $\vep$-restricted subset of $V(G)$ with size at least $\vep^d\abs G$.
	\end{conjecture}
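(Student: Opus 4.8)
The plan is to prove \cref{conj:viral} by induction on $\abs H$, using ``iterative sparsification'': rather than extracting an $\vep$-restricted set in one shot, I would build a chain of induced subgraphs $G=G_0\supseteq G_1\supseteq\cdots\supseteq G_k$ in which each $G_i$ is a subset of $G_{i-1}$ of size at least $\vep^{O(1)}\abs{G_{i-1}}$ that has few copies of a graph $H_i$ with $\abs{H_i}<\abs{H_{i-1}}$, where $H_0:=H$; once $\abs{H_k}\le 2$ the remaining task (``$G_k$ has few copies of an edge or a non-edge'') is trivial and gives an $\vep$-restricted set linear in $\abs{G_k}$. Since the chain has $k\le\abs H$ steps, composing the $\vep^{O(1)}$ losses yields an $\vep$-restricted subset of $G$ of size at least $\vep^{d}\abs G$ with $d=d(H)$, as required. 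The base cases $\abs H\le 2$ are immediate.

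First I would reduce to the case that $H$ is prime. This uses a few-copies, polynomial-dependence analogue of the Alon--Pach--Solymosi argument (\cref{thm:aps}): if $H$ is obtained from $H_1$ by substituting $H_2$ for a vertex, with $H_1,H_2$ both smaller, then for any $G$ with few copies of $H$, either $G$ has few copies of $H_1$ --- apply induction to $H_1$ --- or $G$ has many copies of $H_1$, in which case contracting the near-copies of $H_1$ to single vertices and examining the sets that would have to play the substituted vertex produces a large subset of $G$ with few copies of $H_2$, and we apply induction to $H_2$. This step is routine and keeps the dependence on $\vep$ polynomial.

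The heart of the argument is the inductive step for prime $H$. Suppose $H$ has a vertex $v$ of degree one, with neighbour $u$, and set $H':=H\setminus\{v\}$. I would prove the following dichotomy: if $G$ has few copies of $H$, then either $G$ is already $\vep$-restricted, or there is $S\subseteq V(G)$ with $\abs S\ge\vep^{O(1)}\abs G$ such that $G[S]$ has few copies of $H'$; feeding $H'$ (which has fewer vertices) into the induction then completes this step, and this is the single link of the chain above. To prove the dichotomy, take any copy $\phi$ of $H'$ in $G$ and look at $x_\phi:=\phi(u)$: the copy extends to a copy of $H$ as soon as there is a vertex $y\notin\im\phi$ adjacent to $x_\phi$ and non-adjacent to the other $\abs H-2$ vertices of the copy, so the number of copies of $H'$ that do not extend is controlled by the number of copies of $H$, plus the number of copies of $H'$ for which $x_\phi$ has small degree in $G$, plus those for which some other copy-vertex has large common neighbourhood with $x_\phi$. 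A standard cleaning/averaging step --- restricting to the low-degree vertices of $G$, or to an already-dense common neighbourhood --- then extracts the required $S$. Complementing, the same works if $H$ instead has a vertex of degree $\abs H-2$.

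The hard part --- and the reason this strategy yields \cref{conj:viral} only for a restricted family of graphs $H$, not all of them --- is the case of a prime graph $H$ that has no vertex of degree one and no vertex of degree $\abs H-2$, i.e.\ neither $H$ nor $\overline H$ is buildable, so the leaf-deletion recursion has nothing to delete. The smallest such $H$ is $C_5$; others are the longer cycles $C_k$, their complements $\overline{C_k}$, the Petersen graph, and more generally any prime graph all of whose degrees lie in $\{2,\dots,\abs H-3\}$. For these the dichotomy has no handle: deleting an arbitrary vertex of $H$ destroys exactly the re-extension structure used above, so ``few copies of $H$'' no longer propagates to ``few copies of'' any useful smaller graph, and iterative sparsification stalls at the first step. (The \erh{} property of $C_5$ itself was obtained by a genuinely different, structural argument, with no sparsification analogue.) Finding a replacement reduction valid for leafless prime graphs is, as far as I can see, the crux of the full conjecture and is where I expect this plan to break down; absent such an idea, what the plan delivers --- and what is proved in this paper --- is \cref{conj:viral} for every $H$ such that every non-trivial prime induced subgraph of $H$, or of $\overline H$, has a vertex of degree one, together with the stronger ``two graphs'' version and the ordered-graph and tournament analogues.
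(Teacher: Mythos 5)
The statement you were asked to prove is \cref{conj:viral}, the ``polynomial Nikiforov'' conjecture of Fox--Sudakov type, and it is genuinely open: the paper does not prove it, and only establishes it for the restricted class $\mac H$ (\cref{thm:viral}) and for pairs coming from $\mac J$ and $\mac K$. You correctly recognise this, and your diagnosis of the obstruction --- prime graphs $H$ for which neither $H$ nor $\overline H$ has a vertex of degree one, with $C_5$ the smallest example --- is exactly where the paper's method stops. So no complete proof should have been expected, and none exists in the paper; on that central point your assessment is accurate.

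That said, your sketch of the provable special case diverges from the paper's argument at a point that matters. Your key dichotomy --- ``either $G$ is already $\vep$-restricted, or there is a large $S$ with few copies of $H'$'' --- is stronger than what the leaf argument delivers, and the ``standard cleaning/averaging step'' you invoke does not establish it. What \cref{lem:sparse1} actually proves (and only for $G$ already of small maximum degree, which is why an ordered Nikiforov theorem is needed as a preliminary step) is a trichotomy whose third outcome is a sparse \emph{pair} $(A,B)$ with $\abs A\ge\poly(y)\abs G$ and $B$ containing almost all of $G$: when a copy of $H\setminus\{u,v\}$ extends to many copies of $H'$ but not to many copies of $H$, what you obtain is not a subset with few copies of $H'$ but a large set $A$ of extension-vertices with few edges to the rest of the graph. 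These pairs must then be iterated into a long, wide sparse blockade (\cref{lem:sparse2}), and converting such blockades into an $\vep$-restricted set of polynomial size requires the separate and substantial ``divisive implies viral'' machinery of \cref{transfer} and \cref{thm:divisive} (the cograph-layout and random-sampling argument), which your plan omits entirely. Relatedly, the paper's iteration is not a chain of length at most $\abs H$ indexed by deleting vertices of $H$: it is a chain $S_0\supseteq S_1\supseteq\cdots\supseteq S_m$ of increasingly restricted sets whose length $m$ depends on $\vep$, each step invoking the virality of the \emph{same} smaller pair ($\{H',\overline J\}$ or $\{H,\overline{J'}\}$) as a black box; this is why the induction must be run on pairs, and why the inductive hypothesis must be the full viral statement rather than \erh{} alone. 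If you ran your two-outcome dichotomy directly, the step ``extract $S$ with few copies of $H'$'' would fail precisely in the regime where the sparse blockade is produced instead.
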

	Let us say a graph $H$ is {\em viral} if it satisfies \cref{conj:viral}. Thus, viral graphs satisfy \cref{conj:polyrodl}; and it was recently shown by Buci\'c, Fox, and Pham~\cite{bfp2024} that a graph $H$ has the \erh{} property if and only if it is viral.
	Recent~developments on the Erd\H{o}s-Hajnal conjecture suggest that being viral could be the ``right'' concept to investigate due to its ``counting induced subgraphs'' idea.
	Indeed, this idea was crucial behind the proof of the best known general bound $e^{c\sqrt{\log n\log\log n}}$ (for some $c$ depending on $H$) towards \cref{conj:eh}, which was actually done by directly proving the best known
	quantitative dependence of $\delta$ on $\vep$ known in \cref{thm:rodl,thm:niki}:
	\[\delta=e^{-d(\log\frac1\vep)^2/\log\log\frac1\vep}\]
	(for some $d>0$ depending on $H$).
	We will prove the following equivalent version of \cref{thm:eh}:
	\begin{theorem}
		\label{thm:viral}
		Every $H\in\mac H$ is viral.
	\end{theorem}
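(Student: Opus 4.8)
The plan is to prove \cref{thm:viral} by induction on $|H|$, exploiting that $\mac H$ is closed under induced subgraphs and complementation, and that the class of viral graphs is closed under complementation (immediate from $\ind_H(G)=\ind_{\overline H}(\overline G)$ and the self-complementarity of $\vep$-restrictedness) and under vertex-substitution (by~\cite{polyP4}). The first move is to reduce to $H$ prime: if $H\in\mac H$ is not prime then $H=H_1[v\to H_2]$ where $H_1,H_2$ are induced subgraphs of $H$ with fewer vertices, hence lie in $\mac H$ and are viral by induction, so $H$ is viral by substitution-closure. The cases $|H|\le 2$ are easy (for $|H|=2$: when $\ind_H(G)\le(\vep^2|G|)^2$, fewer than $\vep^2|G|$ vertices have degree above $\vep^2|G|$, and deleting them leaves a sparse, hence $\vep$-restricted, set of size at least $|G|/2$). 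So assume $H$ is prime with $|H|\ge 3$.

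Here I would use that every prime graph in $\mac H$ admits a construction using only the first two operations in the definition of $\mac H$. The last vertex $v$ added is then a pendant (operation~$1$), or becomes one in the complement (operation~$2$); so, after possibly replacing $H$ by $\overline H$, we may write $H=J+v$ with $J:=H\setminus\{v\}\in\mac H$, with $v$ adjacent in $H$ only to some $b\in V(J)$, and with $b$ adjacent in $J$ to all but at most one vertex $a$ (operation~$1$ gives no such $a$; operation~$2$, complemented, gives exactly one). The engine is the identity, obtained by letting $u$ be the image of $b$,
\[\ind_H(G)=\sum_{u\in V(G)}c_u(G),\]
where $c_u(G)$ counts the copies $\chi$ of $H\setminus b$ in $G$ with $\chi(a)\notin N[u]$ (when $a$ exists) and all other vertices of $H\setminus b$ sent into $N(u)$. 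When there is no $a$, one has $H\setminus b=(J\setminus b)+K_1$ and simply $c_u(G)=\ind_{H\setminus b}(G[N(u)])$.

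The core step --- this is the ``iterative sparsification'', passing from $G$ to the more restricted induced subgraph $G[N(u)]$ having peeled $b$ off $H$ --- is the following dichotomy. Take $G$ on $n$ vertices with $\ind_H(G)\le(\vep^{d}n)^{|H|}$, where $d:=d_{H\setminus b}+2$. If at most $\vep^{d}n$ vertices of $G$ have degree above $\vep^{2}n$, delete them: what remains has at least $n/2\ge\vep^{d}n$ vertices and maximum degree at most $\vep^{2}n\le\vep\cdot(n/2)$, hence is $\vep$-restricted. Otherwise more than $\vep^{d}n$ vertices $u$ have $|N(u)|>\vep^{2}n$; were every such $u$ to satisfy $c_u(G)>(\vep^{d}n)^{|H|-1}$, the identity would give $\ind_H(G)>(\vep^{d}n)^{|H|}$, contrary to hypothesis, so some such $u$ has $c_u(G)\le(\vep^{d}n)^{|H|-1}$. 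When $a$ does not exist this reads $\ind_{H\setminus b}(G[N(u)])\le(\vep^{d}n)^{|H|-1}\le(\vep^{d_{H\setminus b}}|N(u)|)^{|H\setminus b|}$ (using $|N(u)|>\vep^{2}n$ and $d=d_{H\setminus b}+2$); since $H\setminus b\in\mac H$ has fewer than $|H|$ vertices, virality of $H\setminus b$ applies to $G[N(u)]$ and yields an $\vep$-restricted subset of $N(u)$ of size at least $\vep^{d_{H\setminus b}}|N(u)|\ge\vep^{d}n$, as wanted. The recursion $d_H=d_{H\setminus b}+2$, together with the non-prime case, keeps $d_H$ finite, which is all virality asks.

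I expect the main obstacle to be the case that $a$ exists --- i.e.\ $b$ has a single non-neighbour $a$ in $H$. Then $c_u(G)$ counts copies of $H\setminus b$ constrained to respect one fixed vertex of $V(G)\setminus N[u]$ (the image of $a$), so passing to $G[N(u)]$ does not eliminate all of $H\setminus b$: a residual adjacency pattern to that outside vertex survives, and one cannot close the argument in one reduction. This is precisely what necessitates a genuine iteration: one builds a nested chain $G=G_0\supseteq G_1\supseteq\cdots$ of successively more restricted induced subgraphs, at each stage recording which vertices of $H$ have been placed and on which side of a chosen vertex-neighbourhood, and appeals to virality of a strictly smaller member of $\mac H$ only once the host has become $\vep$-restricted or the constrained count has collapsed to zero. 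The delicate points will be choosing the successive degree thresholds so that the host shrinks by only a bounded power of $\vep$ over the boundedly many stages, and verifying that ``few copies of $H$ in $G$'' telescopes down the chain to ``few copies of the fully peeled-down graph in $G_k$'', so that virality of that small graph can finally be used.
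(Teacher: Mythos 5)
Your reduction to the prime case (substitution-closure of viral graphs, closure of $\mac H$ under complementation and induced subgraphs) is fine, and your structural description of a prime $H\in\mac H$ --- a degree-one vertex $v$ attached to a vertex $b$ that is adjacent to all but at most one vertex $a$ of $H\setminus\{v\}$ --- agrees with what is established in \cref{lem:char}. But the only case you actually prove is the case in which $a$ does not exist, and that case is vacuous: if $b$ were adjacent to every vertex of $V(H)\setminus\{v,b\}$, then $V(H)\setminus\{v,b\}$ would be a proper subset of $V(H)$, of size $|H|-2\ge 2$ once $|H|\ge 4$, all of whose vertices have the same neighbourhood outside it (adjacent to $b$, nonadjacent to $v$), so $H$ would not be prime; and there are no prime graphs on exactly three vertices. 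Hence every prime $H\in\mac H$ falls into the case you defer, where $b$ has exactly one non-neighbour $a$, and for that case you offer only the admission that a ``genuine iteration'' is needed, with no construction of the nested chain, no choice of thresholds, and no verification that the hypothesis on copies of $H$ telescopes. That is exactly where the entire difficulty of the theorem lies, so the proposal has a genuine gap rather than a complete argument.

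Moreover, the single-neighbourhood recursion you propose (peel $b$, pass to $G[N(u)]$, apply induction to $H\setminus b$) does not extend naturally to the real case: a copy of $H\setminus b$ must now place the image of $a$ outside $N[u]$, and in a sparse host that outside part is essentially all of $G$, so one step of this kind gains no restrictedness. The paper's proof is organized quite differently precisely to get around this: it peels the \emph{leaf} $v$ rather than $b$; it proves the stronger pair statement that $\{H,\overline J\}$ is viral whenever $H,J$ have the leaf property (so that induction can delete a leaf of $H$ when the host is sparse and a leaf of $J$, in the complement, when it is dense --- \cref{thm:viral} is then the case $H=J$ via \cref{lem:char}); it works with ordered graphs and starts each round from an ordered analogue of Nikiforov's theorem (\cref{thm:orderedniki}); and each round produces not an $\vep$-restricted set directly but a sparse or dense blockade (\cref{lem:sparse1,lem:sparse2}), which is converted back into restricted sets by the divisive-implies-viral machinery of \cref{transfer,thm:divisive}. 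Nothing in your sketch plays the role of these steps or otherwise handles the vertex $a$ and the sparse/dense trade-off, so as it stands the proposal does not prove \cref{thm:viral}.
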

	The proof of this is by induction on $|H|$ and underlines the importance of the viral property.
	There are two reasons why we chose to present a direct proof of \cref{thm:viral} in this paper:
	\begin{itemize}
		\item First, even if we just wanted to prove \cref{thm:eh} that the graphs in $\mac H$ have the \erh{} property, it 
		is essential, for our inductive argument to work, that we have a more general inductive hypothesis that the proper induced subgraphs of $H$ are viral.
		This was actually done in~\cite[Chapter~3]{2025thes} where the main argument is only a little simpler numerically.
		To the best of our knowledge, this paper was also the first time that the viral property was crucial in providing (infinitely many) new graphs with the \erh{}~property.
		
		\item Second, in order to prove the viral property of the graphs in $\mac H$ directly, we developed an idea that turns \dd uniformly dense or sparse\ee{} sequences of disjoint vertex subsets with \dd floating\ee{} parameters into polynomially dense induced subgraphs; and this is \cref{transfer}.
		From our viewpoint, this idea is interesting in its own right because it became the main inspiration behind a key step in the proof of the \erh{} property of $P_5$~\cite{density7}. 
	\end{itemize}
	
	Although progress on the \erh{} property itself has been slow, there are several papers in the literature showing that graphs 
	that contain neither of two given graphs have polynomial-sized cliques or stable sets. For instance, it is shown 
	in~\cite{MR4170220}
	that if $H_1, \overline{H_2}$ are forests, there exists $c>0$ such that every graph $G$ that is both $H_1$-free and $H_2$-free
	has a clique or stable set of size at least $|G|^c$. The reason this ``pair of graphs'' approach is comparatively so successful, 
	is that the proof method uses \cref{thm:rodl} as the first step, and thereafter works inside a subgraph that is either very 
	sparse or very dense. One of the graphs $H_1,H_2$ is good for the sparse case, and the other for the dense case, while it may be
	difficult to find a single graph that is good for both cases simultaneously.
	One could try to derive a graph with the \erh{} property by asking that $H_1=H_2$; but for instance, if $H$
	is both a forest and the complement of a forest, then $H$ has at most four vertices, and we already know that such graphs 
	have the \erh{} property. The same happens for all the ``pair of graphs'' results found so far: if we insist that the same graph
	$H$ fills both roles, we get nothing of interest. But in the present paper, that is not so. There is a ``pair of graphs'' version,
	which is perhaps simpler and more natural; and it remains nontrivial (and gives \cref{thm:viral}) if we insist that the two 
	graphs are the same.
	
	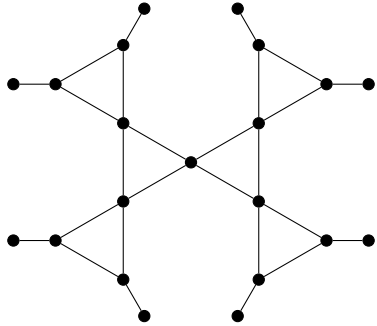
\begin{figure}[h]
		\centering
		
		\tikzstyle{every node}=[inner sep=1.5pt, fill=black,circle,draw]
		\begin{tikzpicture}[scale=0.8,auto=left]
			\def\r{1.3}
			\def\s{.7}
			\node (a) at (0,0) {};
			\node (b1) at ({\r*cos(30)},{\r*sin(30)}) {};
			\node (b2) at ({\r*cos(-30)},{\r*sin(-30)}) {};
			\node (b3) at ({\r*cos(210)},{\r*sin(210)}) {};
			\node (b4) at ({\r*cos(150)},{\r*sin(150)}) {};
			\node (c1) at ({\r*cos(30)},{\r*sin(30)+\r}) {};
			\node (c2) at ({2*\r*cos(30)},{2*\r*sin(30)}) {};
			\node (c3) at ({2*\r*cos(-30)},{2*\r*sin(-30)}) {};
			\node (c4) at ({\r*cos(-30)},{\r*sin(-30)-\r}) {};
			\node (c5) at ({\r*cos(210)},{\r*sin(210)-\r}) {};
			\node (c6) at ({2*\r*cos(210)},{2*\r*sin(210)}) {};
			\node (c7) at ({2*\r*cos(150)},{2*\r*sin(150)}) {};
			\node (c8) at ({\r*cos(150)},{\r*sin(150)+\r}) {};
			\node (d1) at ({\r*cos(30)+\s*cos(120)},{\r*sin(30)+\r+\s*sin(120)}) {};
			\node (d2) at ({2*\r*cos(30)+\s},{2*\r*sin(30)}) {};
			\node (d3) at ({2*\r*cos(-30)+\s},{2*\r*sin(-30)}) {};
			\node (d4) at ({\r*cos(-30)+\s*cos(-120)},{\r*sin(-30)-\r+\s*sin(-120)}) {};
			\node (d5) at ({\r*cos(210)+\s*cos(300)},{\r*sin(210)-\r+\s*sin(300)}) {};
			\node (d6) at ({2*\r*cos(210)-\s},{2*\r*sin(210)}) {};
			\node (d7) at ({2*\r*cos(150)-\s},{2*\r*sin(150)}) {};
			\node (d8) at ({\r*cos(150)+\s*cos(60)},{\r*sin(150)+\r+\s*sin(60)}) {};

			\foreach \from/\to in {a/b1,a/b2,a/b3,a/b4,b1/b2,b3/b4,b1/c1,b1/c2,c1/c2,b2/c3,b2/c4,c3/c4,b3/c5,b3/c6,c5/c6,b4/c7,b4/c8,c7/c8,
				c1/d1,c2/d2,c3/d3,c4/d4,c5/d5,c6/d6,c7/d7,c8/d8}
			\draw [-] (\from) -- (\to);

		\end{tikzpicture}
		\caption{
			With $H$ as shown, $H\in \mac J$, and so $\{H,\overline{H}\}$ is viral.
		} \label{fig:tritree}
	\end{figure}
	
	We define $\mac J$ to be the class of all graphs that can be constructed by a sequence of the following operations, starting with one-vertex graphs:
	\begin{itemize}
		\item choosing a graph $G$ that is already constructed, choosing a vertex $v\in V(G)$,
		and adding a new vertex adjacent only to $v$;
		\item choosing two graphs $H_1,H_2$ that are already constructed, and substituting $H_2$ for a vertex of $H_1$.
	\end{itemize}
	Equivalently,
	$\mac J$ is the family of graphs $J$ with the property that every induced subgraph of $J$ either contains a vertex of
	degree at most one or is not prime.
	For instance, $\mac J$ contains every forest, and all line graphs of forests.
	We prove the following:
	
	\begin{theorem}\label{thm:pairsEH}
		If $H_1,\overline{H_2}\in \mac J$, there exists $c>0$ such that if $G$ is both $H_1$-free and $H_2$-free, then
		$G$ has a clique or stable set of size at least $|G|^c$.
	\end{theorem}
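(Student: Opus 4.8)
The plan is to prove the stronger ``polynomial Rödl'' statement — in fact the ``pair viral'' analogue of \cref{conj:viral} — for pairs $\{H_1,H_2\}$ with $H_1,\overline{H_2}\in\mac J$, and then deduce the Erd\H{o}s-Hajnal-type conclusion by the standard trick of taking $\vep$ to be a small negative power of $|G|$. More precisely, I would prove by induction on $|H_1|+|H_2|$ that there is $d>0$ such that for every $\vep\in(0,\tfrac12)$, every graph $G$ which is both $H_1$-free and $H_2$-free has an $\vep$-restricted subset of size at least $\vep^d|G|$. Granting this, fix $\vep=|G|^{-\eta}$ for a suitably small $\eta>0$; then $G$ has an $\vep$-restricted induced subgraph $G[S]$ with $|S|\ge |G|^{1-\eta d}$, and one of $G[S],\overline{G[S]}$ has maximum degree at most $\vep|S|\le |S|^{1-\eta}$, hence (greedily) a stable set, respectively a clique, of size at least $|S|/(\vep|S|+1)\ge |S|^{\eta}/2\ge |G|^{c}$ for an appropriate $c>0$. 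So the whole content is the inductive polynomial-Rödl statement.

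For the induction, the base cases are when $H_1$ or $H_2$ has at most two vertices (then one of $H_1$-free, $H_2$-free already forces $G$ to be very sparse or very dense, or to be tiny), and more generally when $H_1$ or $\overline{H_2}$ is prime — since $\mac J$ consists of graphs all of whose prime induced subgraphs are trivial (at most two vertices), a prime member of $\mac J$ has at most two vertices, so those cases are already base cases. For the inductive step, by the characterisation of $\mac J$ we may assume $H_1$ is not prime and arises either (i) by adding a degree-one vertex $v$ to a smaller graph $H_1'\in\mac J$ (so $H_1'=H_1\setminus\{v\}$ also lies in $\mac J$), or (ii) by vertex-substitution $H_1'[v\to H_1'']$ with $H_1',H_1''\in\mac J$ smaller; symmetrically for $\overline{H_2}$ in the complement. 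Case (ii) is handled exactly as in the Alon-Pach-Solymosi argument (\cref{thm:aps}) and its polynomial refinement from \cite{polyP4}: if $G$ has few copies of $H_1'$ we are done by induction on $H_1'$ together with $H_2$; otherwise a supersaturation/counting argument produces, inside the ``blow-up structure'' coming from the many copies of $H_1'$, either a large part with few copies of $H_1''$ (apply induction to $\{H_1'',H_2\}$) or a full copy of $H_1$, contradiction — and one has to check the $\vep$-dependence stays polynomial, which is the point of phrasing everything in the viral/polynomial language.

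The real work, and the place where ``iterative sparsification'' enters, is case (i): the leaf-deletion step. Here I would first apply the polynomial Rödl theorem with the full strength of Fox-Sudakov bounds (or rather the inductive hypothesis applied to $\{H_1',H_2\}$, whichever is needed) to pass to a large $\vep'$-restricted induced subgraph $G'$, say with $\overline{G'}$ of small maximum degree (the sparse case; the dense case is symmetric, using that $\overline{H_2}\in\mac J$ supplies the leaf-deletion move on the complement side and $H_1$ handles the other regime). Now the key observation is that in a graph $G'$ which is $H_1'$-sparse and $\vep'$-restricted, a copy of $H_1'$ together with one extra vertex adjacent to the image of the neighbour $u$ of $v$, and nonadjacent to the rest of the image, gives a copy of $H_1$; so $H_1$-freeness plus the abundance of copies of $H_1'$ forces, for almost every copy of $H_1'$, that the ``candidate neighbourhood'' for the leaf $v$ is small — which is a \emph{local sparsity} condition that we can leverage to find an even more restricted subgraph, and then iterate, each iteration reducing the relevant excluded subgraph while only costing a polynomial factor in the density parameter. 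Controlling the number of iterations (it should be bounded in terms of $|H_1|$) and, above all, keeping every loss polynomial rather than tower-type in $1/\vep$ — so that the final $d$ depends only on $H_1,H_2$ — is the main obstacle; this is exactly why the theorem must be proved in the viral form, with a strong inductive hypothesis, rather than merely for the $H$-free/\erh{} statement.
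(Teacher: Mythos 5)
Your overall skeleton (prove a viral/polynomial statement by induction on $|H_1|+|H_2|$, split into a substitution case handled by the polynomial Alon--Pach--Solymosi argument and a leaf case handled by sparsification, then deduce the clique/stable-set bound by taking $\vep$ a small negative power of $|G|$) is the same as the paper's. But two things go wrong. First, your claim that ``a prime member of $\mac J$ has at most two vertices'' is false: $P_4$, and indeed every path and every tree, is a prime member of $\mac J$ (or has prime induced subgraphs of arbitrary size in $\mac J$). Membership in $\mac J$ requires only that every nontrivial prime induced subgraph have a vertex of degree at most one, not that it be trivial; if your claim were true, $\mac J$ would consist of cographs and the theorem would already follow from vertex-substitution alone, which it does not. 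Consequently the prime case is not a base case --- it is exactly the hard case --- and your later ``we may assume $H_1$ is not prime'' contradicts your own case (i), since adding a leaf is precisely how nontrivial prime members of $\mac J$ arise.

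Second, and more seriously, the heart of the leaf case is missing. Observing that $H_1$-freeness forces the candidate neighbourhood of the leaf to be small, and then saying one can ``find an even more restricted subgraph and iterate,'' with the number of iterations ``bounded in terms of $|H_1|$'' and ``each iteration reducing the relevant excluded subgraph,'' does not describe a working argument, nor the paper's. In the paper the leaf argument (\cref{lem:sparse1}) produces, from one well-chosen copy, either a large set with few copies of $H_1'$ (so the inductive viral hypothesis for the pair with $H_1$ replaced by $H_1'$ applies) or a sparse pair $(A,B)$ with $B$ almost all of the host; this is iterated (\cref{lem:sparse2}) into a long, wide sparse blockade, and one then needs the separate and substantial fact that the existence of such blockades at every scale implies the viral property (\cref{thm:divisive}, proved via the cograph-layout amplification of \cref{transfer}), together with an ordered Nikiforov theorem (\cref{thm:orderedniki}) to start the process and the iterative sparsification of \cref{axioms}, in which the restriction parameter of $S_i$ improves doubly exponentially while the number of iterations grows with $1/\vep$ --- it is not bounded in terms of $|H_1|$, and the excluded graphs do not shrink from step to step. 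None of this amplification machinery appears in your sketch; you yourself flag ``keeping every loss polynomial'' as the main obstacle, but that obstacle is the theorem. (A smaller difference: the paper actually proves the ordered version \cref{thm:mainorderedpair} and deduces the unordered statement via \cref{ordering} and \cref{loseorder}; for this particular corollary an unordered leaf lemma can be made to work, but it would have to be supplied.)
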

	The graphs $H$ such that $H\in \mac J$ and $\overline{H}\in \mac J$ are precisely the graphs in $\mac H$ (see \cref{lem:char}), so this
	implies that the members of $\mac H$ have the \erh{} property, by taking $H_1=H_2\in \mac H$.  Note also that, as $\mac J$ contains all forests,
	\cref{thm:pairsEH} contains the theorem of~\cite{MR4170220} mentioned earlier, about excluding a forest and a forest complement.
	We will prove \cref{thm:pairsEH} by showing its viral version, 
	\cref{thm:mainunordered}.
	
	The proof method also applies to ordered graphs (an {\em ordered graph} is a graph with a total order on its vertex set). An argument of Alon, Pach and Solymosi~\cite{MR1832443} shows that 
	the Erd\H{o}s-Hajnal conjecture
	is equivalent to the same statement for ordered graphs. One can define ``vertex-substitution'' and ``prime'' for ordered graphs
	just as for graphs, and again it suffices to consider only prime ordered graphs. But the only prime ordered graphs that 
	(until now) we knew had the ordered \erh{} property had at most {\em three} vertices. We will provide infinitely many. Indeed,
	each graph in $\mathcal{H}$ can be ordered to make a prime ordered graph with the ordered \erh{} property. For instance, the graph
	of \cref{fig:construction}, when ordered such that 
	$$b_{12}\le b_{11}\le b_8\le b_7\le b_4\le b_3\le b_1\le a_2\le a_5\le a_6\le a_{9}\le a_{10}\le a_{13}$$
	becomes a prime ordered graph that has the ordered \erh{} property.
	
	We define $\mac K$ to be the class of all ordered graphs that can be constructed by a sequence of the following operations, starting with one-vertex ordered graphs:
	\begin{itemize}
		\item choosing a graph $G$ that is already constructed, and adding a vertex of degree at most one at one end or the other of its linear order;
		\item choosing two ordered graphs $H_1,H_2$ that are already constructed, and substituting $H_2$ for a vertex of $H_1$.
	\end{itemize}
	We will prove:
	\begin{theorem}\label{thm:buildord}
		If $H_1,\overline{H_2}\in \mac K$, there exists $c>0$ such that if $G$ is an ordered graph that is both
		$H_1$-free and $H_2$-free (in the appropriate sense for ordered graphs), then $G$ has a clique or stable set of size at least $|G|^c$.
	\end{theorem}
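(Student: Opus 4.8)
The plan is to mirror the unordered argument of \cref{thm:pairsEH}, but carried out in the category of ordered graphs, and in fact to prove the stronger viral-type statement (which is what the induction really needs). The backbone is \emph{iterative sparsification}: given an ordered graph $G$ with few copies of $H_1$ and few copies of $H_2$, we repeatedly apply the ordered analogue of Nikiforov's theorem (\cref{thm:niki}, whose proof via the Fox--Sudakov method respects any vertex ordering and so yields the polynomial bound for ordered graphs as well) to pass to a large induced sub-ordered-graph that is $\vep$-restricted, alternating between the sparse and the dense regime so that we can exploit $H_1\in\mac K$ on the sparse side and $\overline{H_2}\in\mac K$ on the dense side. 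At each stage the ambient graph is extremely sparse (or, dually, extremely dense), and the key point is that in a sufficiently sparse ordered graph with few copies of $H_1$, we can find a large induced subgraph that is still sparse but now also avoids the \emph{ordered} leaf-reduction of $H_1$ — i.e.\ we strip off the degree-$\le 1$ vertex that the definition of $\mac K$ hands us, at the cost of a polynomial factor in the size and an increase in $\vep$ that we control.

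The induction is on $|H_1|+|H_2|$. The base case is when one of $H_1,H_2$ has at most two vertices, which is trivial. For the inductive step, suppose $H_1$ has at least three vertices. If $H_1$ is not prime, write $H_1$ as a vertex-substitution of two smaller ordered graphs and appeal to the closure of the viral/Erd\H os--Hajnal property under substitution for ordered graphs (the ordered analogue of \cref{thm:aps}, again due to Alon--Pach--Solymosi) together with the inductive hypothesis. If $H_1$ is prime, then by definition of $\mac K$ it has a vertex $v$ of degree at most one lying at one end of its linear order; let $H_1':=H_1\setminus\{v\}$, which still lies in $\mac K$ and is smaller, so by induction $\{H_1',\overline{H_2}\}$ behaves well. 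Now run the sparsification machine: starting from $G$, first get a large $\vep$-restricted subset; in the sparse case, $G$ is now sparse and $H_1'$-controlled, so we can attach the end-vertex $v$ back in — since $v$ has degree $\le 1$ and sits at the end of the order, a counting/regularization argument shows that if there were many copies of $H_1'$ with an appropriate ``extendable'' low-degree vertex available at the correct side of the order, there would be many copies of $H_1$, contradiction; hence we can pass to a large subset with few copies of $H_1'$ too, and recurse. In the dense case we do the dual thing with $\overline{H_2}$. Iterating this finitely many times (the number of iterations being bounded in terms of $|H_1|+|H_2|$) drives us down to graphs excluding graphs with at most two vertices, forcing a clique or stable set of polynomial size, and unwinding the polynomial losses gives the desired exponent $c$.

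The main obstacle — and the place where the ordering genuinely matters — is the re-attachment step: showing that few copies of $H_1$ in a sparse ordered $G$ let us pass, with only polynomial loss, to a large induced subgraph with few copies of the reduced graph $H_1'$ \emph{such that the low-degree end-vertex can always be re-inserted in an order-compatible way}. In the unordered setting this is handled by a defect-version of dependent random choice or a direct counting argument that finds, for most copies of $H_1'$, many common (non-)neighbours to play the role of $v$; in the ordered setting one must additionally ensure these candidate vertices lie on the correct side of the copy in the linear order, which is exactly why $\mac K$ only allows adding the new vertex at an \emph{end} of the order. Getting the quantitative bookkeeping right here — so that the number of sparsification rounds and the accumulated exponent loss stay bounded by a function of $|H_1|+|H_2|$ alone — is the technical heart of the proof; the rest is assembling the two-sided induction and invoking the ordered forms of \cref{thm:niki} and \cref{thm:aps}.
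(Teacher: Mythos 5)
Your overall architecture matches the paper's: prove the stronger viral statement by induction on $|H_1|+|H_2|$, dispose of the non-prime case by the substitution lemma (\cref{lem:sub}), use the end vertex of degree one that the definition of $\mac K$ supplies, start the sparsification with an ordered form of Nikiforov's theorem (the paper gets \cref{thm:orderedniki} from the R\"odl--Winkler theorem rather than by re-running Fox--Sudakov, but that is a side issue), and iterate inside ever more restricted subsets using the inductive virality of $\{H_1',\overline{H_2}\}$ and $\{H_1,\overline{H_2'}\}$.

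However, there is a genuine gap at exactly the point you call the technical heart, the ``re-attachment'' step. You assert a dichotomy: either the current (sparse, initial-segment) piece has few copies of $H_1'$, or else many copies of $H_1'$ would extend to many copies of $H_1$, a contradiction. That dichotomy is false. Having many copies of $H_1'$ only produces a copy $X$ of $H_1\setminus\{u,v\}$ together with a large set $A$ of candidate images of $u$; to get even one copy of $H_1$ you need an edge from $A$ to a vertex later in the order with no neighbours in $X$, and in a very sparse host such edges may simply not exist. In that case you get neither a contradiction nor a subset with few copies of $H_1'$ --- what you get is a pair $(A,B)$ with $B$ sparse to $A$, where $A$ has polynomial size and $B$ is almost everything. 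This third outcome is unavoidable, and the paper's proof is organized around it: \cref{lem:sparse1} records the three-way outcome, \cref{lem:sparse2} iterates the sparse pairs into a long, wide, $x$-sparse (or, dually, $(1-x)$-dense) blockade, and then the separate and substantial theorem that ``divisive implies viral'' (\cref{thm:divisive}, proved via the cograph-layout and sampling argument of \cref{transfer}) converts the existence of such blockades in every large induced subgraph into the $\vep$-restricted set that virality demands. Your sketch has no counterpart to the blockade outcome nor to the divisive-to-viral conversion, and without them the induction cannot close: dependent-random-choice-style counting cannot rule out the sparse-pair alternative, so the ``contradiction'' you rely on never materializes. (A smaller imprecision: the iteration does not terminate by reducing to excluded graphs on two vertices inside a single $G$; within the proof for a fixed pair one runs the nested sequence $S_0\supseteq S_1\supseteq\cdots$ of \cref{claim:sequence} until either the forbidden blockade appears or a sufficiently restricted polynomial-sized set is reached, and the clique or stable set is then extracted from the viral conclusion in the standard way.)
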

	This contains both \cref{thm:pairsEH,thm:buildtour}. A recent theorem of Pach and Tomon~\cite{MR4273057} 
	asserts the special case of  \cref{thm:buildord} where 
	$H_1$ and $\overline{H_2}$ are both obtained by giving a path its natural ordering.
	
	All these results will be extended, in \cref{thm:mainorderedpair} and \cref{thm:mainunordered}, to say that the corresponding objects are viral. This extension
	is critical for inductive reasons.
	
	We can apply \cref{thm:buildord} to tournaments, and obtain new tournaments with the \erh{} property. (See~\cite{pure10} for 
	some related results.)
	Say a tournament is {\em buildable} if it can be grown from nothing
	by repeatedly either adding a vertex of out-degree $\le 1$  or in-degree $\le 1$, or vertex-substitution. We will show:
	\begin{theorem}\label{thm:buildtour}
		For every buildable tournament $H$, there exists $c$ such that if $G$ is a tournament with no subtournament isomorphic to $H$, then
		there is a transitive set in $V(G)$ with size at least $|G|^c$.
	\end{theorem}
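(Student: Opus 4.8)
The plan is to deduce \cref{thm:buildtour} from \cref{thm:buildord} through the usual dictionary between tournaments and ordered graphs. Let $H$ be a buildable tournament and let $G$ be a tournament with no sub-tournament isomorphic to $H$. Fix any linear order $\le$ on $V(G)$, and let $G'$ be the ordered graph on $(V(G),\le)$ in which $uv$ is an edge exactly when $u<v$ and the arc of $G$ between $u$ and $v$ points from $v$ to $u$ (the backedge graph). A clique of $G'$ is a set of vertices all of whose arcs point downwards and a stable set is one all of whose arcs point upwards, so in either case it is a transitive subset of $V(G)$; hence it is enough to find a clique or stable set of $G'$ of size at least $|G|^c$. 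Moreover, for each linear ordering $\rho$ of $V(H)$ let $H_\rho$ be its backedge graph; an induced copy of $H_\rho$ in $G'$ is precisely a sub-tournament of $G$ isomorphic to $H$ whose vertices sit in the order-pattern $\rho$, so since $G$ is $H$-free, $G'$ is $H_\rho$-free for every ordering $\rho$ of $V(H)$.

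It therefore suffices to produce two orderings $\sigma,\tau$ of $V(H)$ with $H_\sigma\in\mac K$ and $\overline{H_\tau}\in\mac K$. Then \cref{thm:buildord} (applied with $H_1=H_\sigma$ and $H_2=H_\tau$, so that $\overline{H_2}=\overline{H_\tau}\in\mac K$) yields $c>0$ such that every ordered graph $F$ that is both $H_\sigma$-free and $H_\tau$-free has a clique or stable set of size at least $|F|^c$; applied to $F=G'$ this gives a clique or stable set of $G'$ of size at least $|G|^c$, which by the previous paragraph is transitive in $G$. For $\sigma$ I would induct along a witnessing construction of $H$ as a buildable tournament, growing in parallel an ordering of $V(H)$ by adjoining each new vertex at the top or the bottom of the current order. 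If the step adjoins a vertex $v$ of out-degree at most one, place $v$ at the top: among the vertices present, the neighbours of $v$ in the backedge graph are exactly its out-neighbours, of which there is at most one, matching the first $\mac K$-operation. Symmetrically, a vertex of in-degree at most one goes at the bottom. For a vertex-substitution step, use the orderings given inductively for the two smaller tournaments and insert the substituted block into the slot of the replaced vertex; one checks that the backedge graph of a substitution equals the ordered substitution of the backedge graphs, and $\mac K$ is closed under ordered substitution. Finally, reversing all arcs of $H$ exchanges out-degrees with in-degrees and commutes with substitution, so $H^{\mathrm{rev}}$ is buildable as well; running the same construction on $H^{\mathrm{rev}}$ yields an ordering $\tau$ whose backedge graph is $\overline{H_\tau}$, whence $\overline{H_\tau}\in\mac K$.

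Everything here except the parallel construction of $\sigma$ is routine bookkeeping: the clique/stable-to-transitive translation, the matching of copies of $H$ in $G$ with copies of some $H_\rho$ in $G'$, and the identity ``backedge graph of a substitution $=$ substitution of backedge graphs''. The one step that needs care — and that I expect to be the main, if modest, obstacle — is verifying that the three tournament operations defining ``buildable'' are tracked, via the top/bottom placement rule, by the three operations defining $\mac K$; concretely, that a top or bottom insertion makes the new vertex adjacent in the backedge graph to at most one already-present vertex exactly when the inserted tournament vertex has out-degree, respectively in-degree, at most one. Granting this, \cref{thm:buildtour} follows from \cref{thm:buildord}.
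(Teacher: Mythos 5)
Your proposal is correct and follows essentially the same route as the paper: pass to backedge ordered graphs, prove (by induction on the buildable construction, the paper's \cref{lem:QtoL}) that some ordering of $H$ has backedge graph in $\mac K$, and apply \cref{thm:buildord} to conclude that a clique or stable set of the backedge graph, which is a transitive set of $G$, has polynomial size. The only cosmetic difference is how you obtain the second forbidden ordered graph — you run the construction on the arc-reversed tournament, while the paper reverses the vertex ordering and uses closure of $\mac K$ under reversal — and these amount to the same thing.
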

	
	As in \cite{density1, polyP4}, we work with dense or sparse subsets instead of large cliques or stable sets, and rather than forbidding an induced subgraph $H$ we allow a few copies: these are necessary generalizations for our approach to work.  However, the tools from these earlier papers are not strong enough to obtain the polynomial bounds we need here for Erd\H os-Hajnal.  In this paper we use a new method of ``iterative sparsification'': we start with a linear-sized dense or sparse subset $S_0$ as given by R\"odl's theorem, and then move iteratively through a sequence $S_0\supset S_1 \supset\cdots$ of subsets that are ever denser or sparser, but not too small.  If we can keep repeating this process, and keep control of the trade-off between size and density (in particular, maintaining a polynomial relationship between the two) then we can continue the process until we obtain a polynomial-sized clique or stable set.  Stopping the process at an intermediate point gives us a set that is dense or sparse to a given degree, and proves the viral property (as well as the Fox-Sudakov conjecture) for $H$.  The key to making this strategy work is then finding a method of passing to a  large induced subgraph of a dense or sparse graph and increasing the density or sparsity.  We discuss this in the next section.
	
	\section{Ordered graphs, and a sketch of the proof}\label{sec:sketch}
	
	The main motivation for our work was unordered graphs and the \erh{} conjecture, but the proof works equally well for ordered
	graphs, and we thereby gain a much more powerful result.
	We would like to outline the idea of the proof as soon as we can, but we need first to set up more definitions, particularly
	about ordered graphs.
	
	An {\em ordered graph} is a pair $G=(F,\le)$ where $F$ is a graph and $\le $ 
	is a linear order of $V(F)$; and we define $G\nat:=F$, and we define $\le_G$ to equal $\le $.
	A {\em copy} of an ordered graph $H$ in an 
	ordered graph $G$ is an isomorphism $\phi$ from $H\nat$ to an induced subgraph $J$ of $G\nat$, such that for all 
	distinct $u,v\in V(H)$, $u\le_H v$ if and only if $\phi(u)\le _G \phi(v)$. We extend many definitions for graphs to 
	ordered graphs in the natural way; so for instance, if $G$ is an ordered graph, 
	we write $V(G):=V(G\nat)$; $|G|:=|G\nat|$;
	$\overline{G}:=(\overline{G\nat},\le_G)$;
	``degree in $G$'' means degree in $G\nat$; a ``blockade in $G$'' means a blockade in $G\nat$; and so on.
	We use  $[n]$ to denote $\{1,2,\ldots,n\}$ for every integer $n\ge 1$.
	
	We will need to control the number of copies of particular subgraphs.  For  graphs $G,H$, and $x>0$, define
	\[\mu_H(x,G):=\frac{\ind_H(G)}{(x\abs G)^{\abs H}};\]
	and for a finite set $\mac F$ of graphs, let
	$$\mu_{\mac F}(x,G):=\max_{H\in\mac F}\mu_H(x,G).$$ 
	Thus $\mu_{\mac F}(x,G)\le 1$ if and only if $G$ contains at most $x^{|H|}|G|^{|H|}$ induced copies of $H$ for each $H\in\mac F$.
	
	We say that a finite set $\mac F$ of graphs is {\em viral} if there exists $d>0$ such that for every $\vep\in(0,\frac12)$,
	and for every graph $G$ with $\mu_{\mac F}(\vep^d,G)\le 1$, there is an $\vep$-restricted $S\subseteq V(G)$ with 
	$\abs S\ge \vep^d\abs G$. We call such a number $d$ a {\em viral exponent} for $\mac F$.
	Thus a graph $H$ is viral if and only if $\{H\}$ is viral.
	These definitions extend to ordered graphs in the natural way. Thus, when $G,H$ are ordered graphs,
	$\ind_H(G)$ denotes the number of copies of $H$ in $G$, and so on.
	
	We need to define vertex-substitution for ordered graphs. Let $H_1,H_2$ be ordered graphs, and let
	$v\in V(H_1)$. The ordered graph $H$ obtained {\em from $H_1$ by substituting $H_2$
		for $v$} is the pair $(H\nat,\le_H)$, where $H\nat$ is the graph obtained from $H_1\nat$ by substituting $H_2\nat$
	for $v$, and $\le_H$ is defined by:
	\begin{itemize}
		\item if $x,y\in V(H_1)\setminus \{v\}$ then $x\le_H y$ if and only if $x\le_{H_1} y$;
		\item if $x,y\in V(H_2)$ then $x\le_H y$ if and only if $x\le_{H_2} y$;
		\item if $x\in V(H_1)\setminus \{v\}$ and $y\in V(H_2)$, then $x\le_H y$ if and only if $x\le_{H_1}v$.
	\end{itemize}
	An ordered graph is {\em prime} if it cannot be obtained by vertex-substitution from two smaller ordered graphs.
	
	We say $v\in V(H)$ is the {\em first} vertex of an ordered graph $H$ if $v\le_H u$
	for all $u\in V(H)$, and the {\em last} vertex is defined similarly.
	We say that $v$ is an {\em end vertex} of $H$ if $v$ is either the first or last vertex of $H$.
	Let $\mac K$ be the class of all ordered graphs $K$ with the property that for every
	induced ordered subgraph $G$ of $K$, either $G$ is not prime, or there exists $v\in V(G)$ with degree at most one in $G\nat$,
	such that $v$ is an end vertex of $H$.
	Our ultimate goal is to prove the following:
	\begin{theorem}
		\label{thm:mainorderedpair}
		For all $H,J\in\mac K$, the pair $\{H,\overline J\}$ is viral.
	\end{theorem}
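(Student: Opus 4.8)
The plan is to prove \cref{thm:mainorderedpair} by induction on $|H|+|J|$, the engine being an \emph{iterative sparsification}: from a linear-sized restricted subset supplied by R\"odl's theorem we descend through a sequence of induced subgraphs that are successively sparser (or successively denser), each of polynomially controlled size, until one of them is sparse (or dense) enough to be literally a stable set (or clique) — which is then automatically of polynomial size.

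\emph{Reductions.} If $H$ is not prime, write $H=H_1[v\to H_2]$ with $|H_1|,|H_2|<|H|$; fixing a vertex of $H_2$ realises $H_1$ as an induced ordered subgraph of $H$, so $H_1,H_2\in\mac K$, and by the ordered two-graph analogue of the substitution-closure of virality (proved as in \cite{MR1832443,polyP4}) it suffices that $\{H_1,\overline J\}$ and $\{H_2,\overline J\}$ be viral; symmetrically for $J$, using that complementation commutes with substitution. So we may take $H,J$ prime. If $\min(|H|,|J|)\le 2$ the pair is viral directly: few induced copies of $K_2$ (or dually of $\overline{K_2}$) give, by averaging, many vertices of small (resp.\ large) degree, hence a large $\vep$-restricted subset, while a one-vertex forbidden graph makes the hypothesis vacuous. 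As no three-vertex graph is prime, we may assume $|H|,|J|\ge 4$; in particular each of $H,J$ has an end vertex of degree at most one.

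\emph{Skeleton.} Fix $\vep\in(0,\tfrac12)$ and $G$ with $\mu_{\{H,\overline J\}}(\vep^d,G)\le 1$, $d$ a constant to be pinned down at the end. Applying \cref{thm:niki} with $\vep_0:=\tfrac13$ gives an $\vep_0$-restricted $S_0$ with $|S_0|=\Omega(|G|)$ once $d\ge\log_2(1/\delta_0)$ (which also disposes of all $\vep\ge\vep_0$ at once); replacing $G$ by $G[S_0]$ and, if it is dense rather than sparse, by its complement — legitimate since $\{H,\overline J\}$ is viral iff $\{\overline H,J\}$ is and $J\in\mac K$ is the graph we use on the dense side — we reduce to $G$ being $\vep_0$-sparse. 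The heart is a \emph{one-step lemma}: for $\vep\le\vep_0$ and $\vep'<\vep$ not too much smaller, an $\vep$-sparse $G$ obeying the relevant few-copies bound has an induced subgraph on a subset $S$ that is $\vep'$-restricted, has $|S|$ at least a fixed power of $\vep'/\vep$ times $|G|$, and again obeys the relevant few-copies bound with parameter controlled by $\vep'$. (The few-copies hypothesis cannot be weakened to mere sparsity — random-like graphs are $\vep$-sparse but not sparsifiable — so it must be propagated through the iteration, and arranging this is part of the bookkeeping.) Iterating the lemma along a schedule for $\vep'$ (essentially halving $\vep$, or taking a marginally smaller power while $\vep$ is close to $\tfrac12$) brings $\vep$ below $|G|^{-1/2}$ in $O(\log|G|)$ steps; stopping at the first stage where the product of the current density and current size reaches $1$ leaves an induced subgraph of maximum degree $<1$ on one side — a clique or stable set — whose size, by a short computation with the size recursions, is $\ge|G|^{c}$ for a fixed $c>0$, giving the \erh{} conclusion. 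Stopping instead once $\vep$ reaches the prescribed value gives the viral conclusion, with viral exponent a suitable multiple of $d$. The unordered \cref{thm:pairsEH} and its viral form, and (after the routine translation) the tournament statement \cref{thm:buildtour}, drop out of the same machine.

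\emph{The one-step lemma.} Let $a$ be an end vertex of $H$ with $\deg_H(a)\le 1$ and put $H^-:=H\setminus a\in\mac K$, $|H^-|=|H|-1\ge 3$, so $\{H^-,\overline J\}$ satisfies the one-step lemma by induction. Reversing the order of $G$ if needed, take $a$ last, so every copy of $H$ in $G$ is a copy of $H^-$ followed in the order by an attachment joined to it as $a$ is joined in $H$. \emph{If $a$ is isolated in $H$}, split $G$ into its first half $B$ and the complementary suffix; since $G$ is $\vep$-sparse, all but an $O(\vep)$-fraction of the suffix is a valid attachment for any copy of $H^-$ lying in $B$, so $\ind_H(G)\gtrsim|G|\cdot\ind_{H^-}(G[B])-O(\vep|G|^{|H|})$, and few copies of $H$ in $G$ force few copies of $H^-$ in $G[B]$; the inductive one-step lemma for $\{H^-,\overline J\}$ inside $G[B]$ then yields the desired sparser subset, with the size and few-copies claims following by bookkeeping — this is where one checks that the exponents for $\{H,\overline J\}$ and $\{H^-,\overline J\}$ can be chosen compatibly (roughly, $d$ must grow by about the factor $|H|/(|H|-1)$ when $H^-$ is replaced by $H$, consistent with the linear-in-$|H|$ lower bound on viral exponents). \emph{The hard case is $\deg_H(a)=1$}, say $a$ adjacent only to $u$: a copy of $H^-$ in $B$ now extends only through an attachment adjacent to the image of $u$ and to nothing else in the copy, and in a sparse graph no such attachment need exist for a typical copy, so the extension count degenerates. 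I would handle this by first building, inside $G$, a \emph{blockade} — a long sequence of disjoint parts, obtained by iterating R\"odl-type sparsification on pairs of parts while keeping the size/density trade-off polynomial — in which the parts are pairwise anticomplete except that a positive density of edges is guaranteed between the last two; one then seeks copies of $H^-$ with the image of $u$ in the penultimate part and all remaining vertices in earlier parts, so that an attachment taken from the last part is automatically nonadjacent to them while the needed edge to the image of $u$ is furnished by the guaranteed density. Scarcity of copies of $H$ now does force scarcity of these copies of $H^-$, and the inductive one-step lemma finishes as in the isolated case. I expect this blockade construction — and in particular making it quantitatively compatible with the outer iteration so that the polynomial relationship between size and density survives — to be the main obstacle; handling it is exactly what the iterative-sparsification framework is designed for.
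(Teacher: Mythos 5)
Your outer skeleton (induction on $|H|+|J|$, reduction to the prime case via substitution-closure of virality, then exploiting an end vertex of low degree and iterating a sparsification step) matches the paper's strategy, but the proof has a genuine gap exactly where the paper's real work happens: the case where the end vertex $a$ has degree one. You acknowledge this ("I expect this blockade construction \dots to be the main obstacle"), and the route you sketch for it does not work as stated. You propose first building a long blockade whose parts are pairwise \emph{anticomplete} except for a guaranteed positive edge density between the last two parts, so that scarcity of copies of $H$ forces scarcity of copies of $H^-$. Producing polynomially wide anticomplete (edge-free) parts inside a merely sparse graph is itself a major problem (it is the ``pure pairs'' phenomenon, false in general for sparse random-like graphs and only available under strong structural hypotheses), and nothing in your few-copies hypothesis hands it to you; nor do you need the dense last pair the way you describe. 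The paper never constructs such a structure. Instead (\cref{lem:sparse1}) it places the degree-one leaf $v$ last in the order, sets $u$ to be its neighbour and $J:=H\setminus\{u,v\}$, and works with the dichotomy: either the initial segment $S$ of $G$ contains few copies of $H':=H\setminus\{v\}$ --- in which case the iteration recurses using the \emph{inductive virality} of $\{H',\overline J\}$ applied to $G[S]$ --- or $S$ contains many copies of $H'$, so some copy of $J$ has a large set $A$ of candidate images of $u$; sparsity of $G$ makes almost all of $V(G)\setminus S$ nonadjacent to the image of $J$, every $A$--$B$ edge then completes a copy of $H$, and the few-copies-of-$H$ hypothesis forces $(A,B)$ to be a sparse pair with $B$ nearly all of $G$. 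Iterating this gives a long sparse blockade (\cref{lem:sparse2}). Your sketch inverts this logic (you try to force scarcity of $H^-$ rather than use abundance of $H'$ against scarcity of $H$), and the structure you would need to do so is precisely what you cannot build.

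A second missing ingredient: your one-step lemma claims to pass directly from an $\vep$-sparse graph to an $\vep'$-restricted induced subgraph of polynomial size, but the leaf argument above produces a sparse \emph{blockade}, not a restricted subset, and converting ``blockades of suitable length and width in every large induced subgraph'' into a restricted subset of polynomial size is a separate, nontrivial theorem in the paper (the divisive-implies-viral result, \cref{thm:divisive}, proved via the cograph layout argument of \cref{transfer}); the recursion to a more restricted set $S_{i+1}$ goes only through the ``few copies of $H'$'' branch, while the blockade branch is the terminal success outcome in the definition of divisiveness (\cref{axioms}). Without this conversion your stopping argument (``stop when the density times the size reaches $1$'') has nothing to act on in the hard branch. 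Two smaller points: your claim that no three-vertex ordered graph is prime is false (e.g.\ $1<2<3$ with the single edge $\{1,3\}$ is prime), though harmless since prime members of $\mac K$ on at least three vertices still have an end vertex of degree exactly one; and the ``isolated end vertex'' case you treat at length cannot occur for prime $H$, since an end vertex of degree zero yields an ordered substitution decomposition.
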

	All the other theorems we mentioned will be corollaries of this.
	
	Before we can sketch the proof, we need a few more definitions. 
	For a graph $G$ and disjoint $A,B\subseteq V(G)$, $B$ is {\em $x$-sparse} to $A$ if every vertex in $B$ has at most $x\abs A$ neighbours in $A$,
	and {\em $(1-x)$-dense} to $A$ if every vertex in $B$ has at most $x\abs A$ nonneighbours in $A$.
	A {\em blockade} in a graph or ordered graph $G$ is 
	a finite sequence $(B_1,\ldots,B_n)$ of (possibly empty) disjoint subsets of $V(G)$;
	its {\em length} is $n$ and its {\em width} is $\min_{i\in[n]}\abs{B_i}$.
	We also say that $B_1,\ldots,B_n$ are the {\em blocks} of this blockade.
	For $k,w\ge0$, $(B_1,\ldots,B_n)$ is a {\em $(k,w)$-blockade} if its length is at least $k$ and its width is at least $w$.
	For $x\in(0,\frac12)$, this blockade is {\em $x$-sparse} if $B_j$ is $x$-sparse to $B_i$ for all $i,j\in[n]$ with $i<j$,
	and {\em $(1-x)$-dense} if $B_j$ is $(1-x)$-dense to $B_i$ for all $i,j\in[n]$ with $i<j$.  A central part of our argument will involve finding bloackades that are both large and satisfy suitable sparsity conditions.

	Here, finally, is a sketch of the proof of \cref{thm:mainorderedpair}. 
	Let us note that in the actual proof the variables we use will be slightly different from those in the sketch.

	\noindent{\em Proof sketch of \cref{thm:mainorderedpair}.}
	We work by induction on $|H|+|J|$. 
	If one of $H,J$ is not prime, 
	the result follows easily
	from the inductive hypothesis,  by means of \cref{lem:sub} below.
	So we can assume they are both prime, and hence, for each of $H,J$, some end
	vertex has degree one. Let $H',J'$ be obtained from $H,J$ respectively, by deleting an end vertex of degree one.
	Inductively we know that $\{H',\overline{J}\}$ and $\{H,\overline{J'}\}$ are both viral; and this will be enough to imply that
	$\{H,\overline J\}$ is viral (\cref{axioms}). Let $d_0$ be large enough to be a viral exponent for both $\{H',\overline{J}\}$ and $\{H,\overline{J'}\}$.

	There is a key result, \cref{thm:divisive} below, that deduces the property of being viral from the
	existence of suitable blockades. It follows that
	if there exists $b$ such that for every ordered graph $G$ with $\mu_{\mac F}(x^b,G)\le 1$, 
	and every $x\in(0,\frac12)$, there
	is an $x$-sparse or $(1-x)$-dense blockade in $G$ with appropriate length and width, then $\mathcal{F}$ is viral.
	Because of this, we will try to find such a blockade, instead of trying to find directly a large $\vare$-restricted set.
	Thus, let $b\ge d_0$ be some large number,  let $x\in(0,\frac12)$, and let
	$G$ be an ordered graph, with $\mu_{\mac F}(x^b,G)\le 1$. We assume for a contradiction that the blockade we want does not exist; 
	that is, there is no $x$-sparse or $(1-x)$-dense blockade in $G$ with length $k$, where $2\le k\le 1/x$, and width at least 
	$\lfloor|G|/k^b\rfloor$.
	
	Let $h=\max(|H|,|J|)$ and $d\ge d_0$ be some appropriately large number (and later we will choose $b$ to be about $hd^2$).
	We will grow a nested sequence of subsets $V(G)=S_0\supseteq S_1\supseteq \cdots$, where for each $i$,
	$\abs{S_i}\ge 2^{-6hd^{i}}\abs{S_{i-1}}$ and $G[S_i]$ is $2^{-4hd^{i-1}}$-restricted. We have a method to define $S_{i+1}$ 
	in terms of $S_i$, provided $S_i$ is $2^{-4hd^{i-1}}$-restricted;  but it does not work to 
	define $S_1$, because $G$ is generally not $c$-restricted for any fixed $c>0$, so 
	we have to do something else to get $S_1$. We need a large subset $S_1$ which is $2^{-4h}$-restricted.
	Nikiforov's theorem would give us such a thing, except we are working with ordered graphs; so we need an ordered graphs
	version of Nikiforov's theorem (\cref{thm:orderedniki} below). This gives $G[S_1]$. The latter will either have small 
	maximum degree, or the same in the complement, and by moving to the complement if necessary we may assume it has small maximum degree.
	
	Now we will describe
	the general step, to obtain $S_{i+1}$ from $S_i$ when $i\ge 1$.
	We recall that $H'=H\setminus \{v\}$, where $v$ is an end vertex of $H$ with degree one.
	Now we will use that $\{H',\overline{J}\}$ is viral.
	(In the other case, when $G[S_1]$ has small maximum degree in the 
	complement, we would use that $\{H,\overline{J'}\}$ is viral.) Reversing the order if necessary, we may assume that 
	$v$ is the last vertex of $H$.

	Let $y=2^{-2hd^{i-1}}$. Thus $S_i$ is  $y^2$-restricted.
	$G$ is an ordered graph; let $S$ be the set of the first $\lceil y|S_i|\rceil$ vertices of $S_i$. If $G[S]$ does not contain 
	many copies of $H'$ (and we already know that it does not contain many copies of $\overline{J}$, since $G$ itself does not), 
	we can use
	that $\{H',\overline{J}\}$ is viral to find $S_{i+1}\subseteq S$. So we assume that it does contain many copies of $H'$. Let
	$u$ be the neighbour of $v$ in $H$, and let $H''=H\setminus \{u,v\}$. It follows that $G[S]$ contains many copies of $H''$
	that each can be extended to many copies of $H'$ in $G[S]$. But they cannot all be extended to many copies of $H$ in $G$,
	because there are not that many copies of $H$. So there is a copy $X$ of $H''$ in $G[S]$, that extends to many copies of $H'$ in $G[S]$,
	and yet does not extend to many copies of $H$ in $G$; see \cref{fig:sketch}.
	
	\begin{figure}
		\begin{tikzpicture}[scale=0.7]
			\draw[rounded corners] (-12,3) -- (-12,-3) -- (-7,-3) -- (-7,3) -- cycle;
			\draw (-9.5,3.5) node [] {$S$: $\abs S=\ceil{y\abs{S_i}}$};
			\draw[->] (-11.6,3.5) to (-12,3.5);
			\draw[->] (-7.4,3.5) to (-7,3.5);
			\filldraw[color=black,color=black!10,fill=black!10] (-10.5,0.63) -- (-4.4,-1) -- (-4.4,3) -- (-9.8,2.49);
			\filldraw[color=black,fill=white] (-9.5,1.5) ellipse (2cm and 1cm);
			\draw (-9.5,1.5) node [] {$X\cong H\setminus\{u,v\}$};
			\draw (-9.5,-2.5) node[align=center] {$\abs{A_1}\ge \poly(y)\abs{S_i}$};
			\draw[rounded corners] (0,3) -- (0,-1) -- (-4.5,-1) -- (-4.5,-3) -- (8,-3) -- (8,3) -- cycle;
			\draw[rounded corners,color=green,fill=green!20] (-4.5,-1) -- (-4.5,3) -- (0,3) -- (0,-1) -- cycle;
			\draw (-2.25,1) node [] {$\le(\abs H-2)y^2\abs{S_i}$};
			\draw[rounded corners,color=red,fill=red!20] (0,-1) -- (-4.5,-1) -- (-4.5,-3) -- (0,-3) -- cycle;
			\draw (-2.25,-2) node [] {$\le y^2\abs {S_i}$};
			\draw (4,0) node [] {$\ge (1-\abs Hy)\abs {S_i}$};
			\node[inner sep=1pt, fill=black,circle,draw] (w) at (-4.25,-2) {};
			\filldraw[color=black,fill=black!10] (-9,-1.25) -- (w) -- (-9,-2) -- (-9,-1.25);
			\draw[rounded corners,color=blue,fill=blue!20] (-11.25,-2) -- (-11.25,-0) -- (-7.75,-0) -- (-7.75, -2) -- cycle;
			\draw (-9.5,-1) node [align=center] {$A_1$};
			\draw (-5.75,-2.5) node [align=center] {$\ge x\abs{A_1}$};
			\draw (4,3.5) node[] {$B_1$: $x$-sparse to $A_1$};
			\draw[->] (1.5,3.5) to (0,3.5);
			\draw[->] (6.5,3.5) to (8,3.5);
		\end{tikzpicture}
		\caption{Inside $G[S_i]$. The blue box represents $A_1$, the set of choices for $u$; the green box represents the set of vertices with a neighbour in $V(X)$; and the red box represents the set of vertices not in the green box and with at least $x\abs {A_1}$ neighbours in $A_1$.
		} \label{fig:sketch}
	\end{figure}
	
	Let $A_1$ be the set of vertices in $S$ that give an extension of $X$ to a copy
	of $H'$; so $A_1$ is large, all the vertices in $A_1$ have the same neighbours in $V(X)$, and they are all in the right position in the order $\le_G$
	with respect to the vertices in $X$. (For simplicity, we are conflating the copy $X$, which is an isomorphism,
	with its image, an induced subgraph of $G[S]$.) Since $G[S_i]$ has maximum degree at most $y^2|S_i|$, there are not many vertices 
	in the rest of $S_i$ that have a neighbour in $V(X)$; and for the others, say $B$, any edge between $A_1$ and $B$ gives a copy
	of $H$. (This is where we use that $v$ is the last vertex of $H$ and $S$ is an initial segment of $G[S_i]$; 
	all the vertices in $B$ are in the right order relative to $X$ and $A_1$.) Since $X$ does not extend to many copies of $H$, 
	there are not many edges between $A_1$ and $B$; and by throwing away a 
	few outliers, we can choose $B_1\subseteq B$, $x$-sparse to $A_1$, where $B_1$ still contains almost all of $S_i$. We have produced
	an $x$-sparse blockade $(A_1,B_1)$ in $G[S_i]$ of length two, where $|A_1|$ is at least $\poly(y)|G|$, and $|B_1|$ is only slightly smaller than $|S_i|$.
	That is the only argument where we use the leaf of $H$; it is \cref{lem:sparse1}.
	
	Now we look inside the set $B_1$ above, and repeat the same argument; we obtain either the desired set $S_{i+1}$, or
	an $x$-sparse blockade $(A_1,A_2,B_2)$ in $G[S_i]$ 
	of length three, where $|A_1|,|A_2|$ are both at least $\poly(y)|G|$, and $|B_2|$ is only slightly smaller than
	$|S_i|$. By repeating this $1/y$ times (which we can, there is enough room), we obtain either the set $S_{i+1}$ that we want, 
	or the blockade that we assumed did not exist. This is \cref{lem:sparse2}.

	\section{Some lemmas about counting subgraphs}

	In what follows,
	for sets $X,Y,Z$ with $Z\subseteq X$ and a map $f\colon X\to Y$, let $f\vert_Z$ denote the restriction of $f$ to $Z$.
	If $H$ is an ordered graph, and $X\subseteq V(H)$,
	$H\setminus X$ and $H[X]$ are defined in the natural way.

	We need to extend Nikiforov's theorem (\cref{thm:niki}) to ordered graphs, and to do so, we use the following result of R\"odl and Winkler~\cite{rodlwinkler}:
	\begin{theorem}\label{rodlwinkler}
		For every ordered graph $J$ there is a graph $H$ such that, with every ordering of $V(H)$, it contains a copy of $J$.
	\end{theorem}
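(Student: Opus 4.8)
The plan is to prove this by a Ramsey-type argument, reducing the problem to a statement about $2$-colourings of edges of complete graphs. First I would reformulate: given an ordered graph $J$ on vertex set $\{1 < 2 < \cdots < k\}$, I want an (unordered) graph $H$ such that every linear order on $V(H)$ yields an induced copy of $J$ in which the induced ordering agrees with that of $J$. The natural candidate is to take $H$ to be obtained by a Ramsey argument: let $n = R(k;2)$ be large enough (to be determined), set $V(H) = [n]$, and define adjacency in $H$ so that $H$ ``records'' the edge-set of $J$ in a way robust to reordering. Concretely, I would define $H$ on vertex set equal to the set of $k$-subsets of some huge ground set, or—cleaner—use the standard trick of iterating: build $H$ so that for any linear order $\le^\ast$ on $V(H)$, one can find $k$ vertices $x_1 <^\ast x_2 <^\ast \cdots <^\ast x_k$ whose adjacency pattern in $H$ is exactly that of $J$ on $1, \dots, k$.

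The key step is the following. Fix the target ordered graph $J$ on $[k]$. Consider the complete graph on a large vertex set $N$, and $2$-colour each edge $\{a,b\}$ (say $a < b$ in the natural order on $N$) by whether or not we want $a \sim_H b$; but we get to choose this colouring. The subtlety is that an adversary later imposes a new linear order $\le^\ast$ on $N$, and we must still locate $x_1 <^\ast \cdots <^\ast x_k$ with the right pattern. The right framework is a product/iterated Ramsey argument: build $H$ as a ``blow-up tower'' indexed by the $k$ levels of $J$, where at level $i$ we install a large set $W_i$, all of $W_i$ is complete or anticomplete to all of $W_j$ according to whether $ij \in E(J)$, and each $W_i$ is itself a sufficiently large \emph{edgeless or complete} graph internally (so that no internal edges interfere), with $|W_1| \gg |W_2| \gg \cdots \gg |W_k|$. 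Given any linear order $\le^\ast$ on $V(H) = W_1 \cup \cdots \cup W_k$, I would argue by a greedy/pigeonhole descent: since $|W_1|$ is enormous, either many vertices of $W_1$ come $\le^\ast$-before all of $W_2 \cup \cdots \cup W_k$, or many come after; record which, recurse. After processing all levels we extract one vertex per level sitting in the correct $\le^\ast$-order, and by construction their $H$-adjacencies match $J$'s edges exactly. This requires the $W_i$ to have sizes chosen via iterated Ramsey numbers $R_k(R_{k-1}(\cdots))$, which is fine since we only need existence.

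The main obstacle I anticipate is handling the interaction between the adversary's order and the \emph{internal} structure of each block $W_i$: if $W_i$ is a clique or a stable set internally, then two vertices of $W_i$ are always adjacent (resp. non-adjacent) regardless of their $\le^\ast$-order, so we must be sure that in the final selected $k$-tuple we never pick two vertices from the same $W_i$—which the level-by-level descent guarantees automatically, since we commit to exactly one representative per level before moving on. A secondary point to check carefully is that the descent preserves enough vertices at each of the lower levels: when we pass from ``all of $W_1 \cup \cdots \cup W_k$ under $\le^\ast$'' to ``the part before (or after) our chosen $W_1$-representative'', we must retain a positive fraction (in fact just $\ge 1$ suffices at the bottom, but a growing fraction higher up) of each surviving $W_j$; choosing the size gaps $|W_i|/|W_{i+1}|$ to grow like a Ramsey function of the remaining levels makes this bookkeeping go through. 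I would organize the induction on $k$: the statement for ordered graphs on $k$ vertices follows from that on $k-1$ vertices applied inside the larger of the two halves produced by the first pigeonhole split, after prepending the chosen representative. This gives $H$ with the required property, completing the proof.
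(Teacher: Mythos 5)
First, note that the paper does not prove this statement at all: it is quoted as a known theorem of R\"odl and Winkler, so there is no internal proof to compare against; the only question is whether your argument would constitute a valid proof, and it would not. The concrete construction you propose --- a blow-up tower $W_1,\dots,W_k$ in which $W_i$ and $W_j$ are completely joined or completely non-joined according to $E(J)$, with each $W_i$ internally complete or edgeless --- cannot work in general, no matter how the sizes are chosen. Suppose the underlying graph of $J$ is prime (no nontrivial modules) with at least four vertices and has trivial automorphism group, and order $V(J)=\{1<2<\cdots<k\}$. In any such blow-up $H$, two vertices in the same block are twins, so an induced copy of the (prime) underlying graph of $J$ can use at most one vertex per block; it therefore picks exactly one vertex $x_i$ from each $W_i$, and by rigidity the copy must send vertex $i$ to $x_i$. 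Hence an \emph{ordered} copy of $J$ exists under an ordering $\le^*$ only if one can choose $x_i\in W_i$ with $x_1<^*x_2<^*\cdots<^*x_k$. The adversary who places all of $W_k$ first, then $W_{k-1}$, and so on down to $W_1$ defeats this outright: no such transversal exists, so $H$ with that ordering contains no copy of $J$ at all, regardless of block sizes or internal choices.

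This is exactly the point your ``pigeonhole descent'' glosses over: when many (or all) vertices of $W_1$ come $\le^*$-after the other blocks, ``record which, recurse'' gives you nothing, because the graph $H$ is already fixed and too rigid --- there is no alternative configuration of vertices in $H$ realizing $J$'s adjacency pattern in a different order of levels. The real content of the R\"odl--Winkler theorem is precisely that $H$ must contain copies of the underlying graph of $J$ in essentially all order types simultaneously, which a single blow-up cannot supply; the known proofs achieve this with genuinely different machinery (an amalgamation/partite-type construction, or a probabilistic construction with a much more careful union bound than block-disjoint copies provide). Your opening paragraph's appeal to Ramsey numbers is never actually used in the construction, so as written the proposal has no route around this obstruction.
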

	We deduce:
	\begin{theorem}
		\label{thm:orderedniki}
		For every ordered graph $J$ and every $\vep\in(0,\frac12)$, there exists $\delta>0$ such that if $G$ is a ordered graph
		with $\ind_J(G)\le (\delta\abs G)^{\abs J}$,
		there is an $\vep$-restricted subset $S\subseteq V(G)$ with size at least $\delta \abs G$.
	\end{theorem}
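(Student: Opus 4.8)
The plan is to deduce \cref{thm:orderedniki} from the unordered Nikiforov theorem (\cref{thm:niki}) by ``forgetting the order'', with \cref{rodlwinkler} serving as the bridge: a shortage of ordered copies of $J$ in $G$ will be shown to force a shortage of \emph{unordered} copies of a suitable graph $H$ in $G\nat$, and then \cref{thm:niki} applied to $H$ produces the required $\vep$-restricted set. Concretely, given the ordered graph $J$, I would first apply \cref{rodlwinkler} to fix a graph $H$ with the property that every linear order on $V(H)$ turns $H$ into an ordered graph that contains a copy of $J$; in particular $\abs H\ge\abs J$. Next I would apply \cref{thm:niki} to $H$ and $\vep$ to obtain a constant $\delta'>0$ depending only on $H$ (hence only on $J$) and on $\vep$, and then choose the final constant $\delta\in(0,\delta']$ at the very end.

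The heart of the argument is the estimate
\[\ind_H(G\nat)\ \le\ \abs H!\cdot\abs G^{\abs H-\abs J}\cdot\ind_J(G),\]
valid for every ordered graph $G$. To prove it, I would assign to each copy of $H$ in $G\nat$ (an isomorphism from $H$ onto an induced subgraph of $G\nat$) a copy of $J$ in $G$ as follows: the image of the copy is an induced subgraph of $G\nat$ isomorphic to $H$, so with the order inherited from $G$ it becomes an ordered version of $H$, and hence by \cref{rodlwinkler} it has an induced subset that, with that inherited order, is a copy of $J$ in $G$; pick one such. Now fix a copy of $J$ in $G$, with image $W$. Every copy of $H$ assigned to it has image containing $W$, so its image is one of at most $\binom{\abs G-\abs J}{\abs H-\abs J}\le\abs G^{\abs H-\abs J}$ sets, and there are at most $\abs H!$ isomorphisms onto each such set; hence this copy of $J$ receives at most $\abs H!\cdot\abs G^{\abs H-\abs J}$ copies of $H$. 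Since a finite ordered graph has no nontrivial order-preserving automorphism, distinct copies of $J$ in $G$ have distinct images, so summing this fibre bound over the at most $\ind_J(G)$ copies of $J$ gives the displayed inequality.

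Finally I would close the argument quantitatively. If $\ind_J(G)\le(\delta\abs G)^{\abs J}$, the estimate gives $\ind_H(G\nat)\le\abs H!\cdot\delta^{\abs J}\cdot\abs G^{\abs H}$, so taking $\delta:=\min\{\delta',\,((\delta')^{\abs H}/\abs H!)^{1/\abs J}\}$ forces $\ind_H(G\nat)\le(\delta'\abs G)^{\abs H}$. Then \cref{thm:niki} yields an $\vep$-restricted $S\subseteq V(G\nat)=V(G)$ with $\abs S\ge\delta'\abs G\ge\delta\abs G$; and since degrees in $G$ agree with degrees in $G\nat$ (and likewise for complements), $S$ is $\vep$-restricted in $G$, which is exactly what is wanted. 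There is no deep obstacle here beyond getting the bookkeeping right; the three points that need care are that the reduction must be run in the contrapositive direction (many copies of $H$ in $G\nat$ force many copies of $J$ in $G$), that the fibres of the above assignment must be bounded by a fixed power of $\abs G$ so that the Nikiforov hypothesis for $H$ is reached, and that being $\vep$-restricted is a property of $G\nat$ alone, so that passing between $G$ and $G\nat$ costs nothing.
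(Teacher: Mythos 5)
Your proposal is correct and follows essentially the same route as the paper: fix $H$ via \cref{rodlwinkler}, bound $\ind_H(G\nat)$ in terms of $\ind_J(G)$ by a counting argument, and then invoke \cref{thm:niki} (noting that $\vep$-restriction is a property of $G\nat$). The only difference is bookkeeping: you carry an explicit $\abs H!$ factor for the isomorphisms onto each candidate image and absorb it into $\delta$, whereas the paper's count is stated without it and takes $\delta=(\delta')^{\abs H/\abs J}$.
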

	\begin{proof}
		Choose $H$ as in \cref{rodlwinkler}, and choose $\delta'$ such that setting $\delta=\delta'$ satisfies \cref{thm:niki}. 
		Let $h:=|H|$ and $j:=|J|$, and 
		let $\delta:=(\delta')^{h/j}$. We claim that $\delta$ satisfies the theorem. To show this, let 
		$G$ be an ordered graph
		with $\ind_J(G)\le (\delta\abs G)^{\abs J}$. We must show that
		there is an $\vep$-restricted subset $S\subseteq V(G)$ with size at least $\delta\abs G$.
		
		Since each copy of $J$ in $G$ extends to at most $|G|^{h-j}$ copies of $H$ in $G\nat$,
		and each copy of $H$ in $G\nat$ is an extension of some copy of $J$ in $G$ (because of the choice of $H$), there are at most 
		$$\ind_J(G)|G|^{h-j}\le (\delta\abs G)^j|G|^{h-j}= (\delta'\abs G)^h$$
		copies of $H$ in $G\nat$. But then the result follows from the choice of $\delta'$. This proves \cref{thm:orderedniki}.
	\end{proof}
	
	We observe:
	\begin{lemma}
		\label{lem:increasing}
		If $\mac F\subset\mac F'$ and $\mac F$ is viral, then so is $\mac F'$.
	\end{lemma}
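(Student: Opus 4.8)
The plan is to show that any viral exponent for $\mac F$ is already a viral exponent for $\mac F'$, so nothing new needs to be constructed. The only point to observe is that enlarging the family can only increase the quantity $\mu$: since $\mu_{\mac F}(x,G)=\max_{H\in\mac F}\mu_H(x,G)$ is a maximum over a set, and $\mac F\subseteq\mac F'$, we have $\mu_{\mac F}(x,G)\le\mu_{\mac F'}(x,G)$ for every graph $G$ and every $x>0$. Consequently the hypothesis $\mu_{\mac F'}(\vep^d,G)\le1$ implies $\mu_{\mac F}(\vep^d,G)\le1$.

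So first I would fix $d>0$ to be a viral exponent for $\mac F$, whose existence is guaranteed since $\mac F$ is viral. Then, given any $\vep\in(0,\tfrac12)$ and any graph $G$ with $\mu_{\mac F'}(\vep^d,G)\le1$, the inequality above gives $\mu_{\mac F}(\vep^d,G)\le1$; applying the definition of virality for $\mac F$ with this $\vep$ and $G$ yields an $\vep$-restricted $S\subseteq V(G)$ with $\abs S\ge\vep^d\abs G$, which is exactly what is required for $\mac F'$. Hence $d$ is a viral exponent for $\mac F'$, and $\mac F'$ is viral. (Here $\mac F'$ is understood to be a finite set of graphs, as in the definition.) There is no real obstacle in this argument; it is purely a matter of unwinding the definition and using monotonicity of a maximum under set inclusion.
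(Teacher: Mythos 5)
Your argument is correct and is exactly the paper's proof: the paper also deduces the lemma from the monotonicity $\mu_{\mac F}(x,G)\le\mu_{\mac F'}(x,G)$, so that any viral exponent for $\mac F$ serves as one for $\mac F'$. You have simply unwound the same one-line observation in full detail.
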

	\begin{proof}
		This follows from the fact that $\mu_{\mac F}(x,G)\le\mu_{\mac F'}(x,G)$ for every $x>0$ and every graph (or ordered graph) $G$.
	\end{proof}
	
	It will also be useful to have an analogue of the Alon-Pach-Solymosi theorem (\cref{thm:aps}) for viral families of graphs.  This was proved for single graphs in~\cite{polyP4}; here is an extension to families: 
	\begin{lemma}
		\label{lem:sub}
		Let $\mac F$ be a finite set of graphs, and let $H_1,H_2$ be graphs such that $\mac F\cup\{H_1\}$ and $\mac F\cup\{H_2\}$ are viral.
		Let $H$ be obtained by substituting $H_2$ for a vertex $v$ of $H_1$.
		Then $\mac F\cup\{H\}$ is viral.
		The same is true for ordered graphs in place of graphs.
	\end{lemma}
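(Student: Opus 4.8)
The statement to prove is Lemma~\ref{lem:sub}: if $\mac F\cup\{H_1\}$ and $\mac F\cup\{H_2\}$ are viral, and $H$ is obtained by substituting $H_2$ for a vertex $v$ of $H_1$, then $\mac F\cup\{H\}$ is viral (same for ordered graphs). The plan is to adapt the single-graph argument of~\cite{polyP4} to the family setting. Let $d$ be a common viral exponent for $\mac F\cup\{H_1\}$ and $\mac F\cup\{H_2\}$, and set $h_1=|H_1|$, $h_2=|H_2|$, $h=|H|=h_1+h_2-1$. Given $\vep\in(0,\tfrac12)$ and a graph $G$ with $\mu_{\mac F\cup\{H\}}(\vep^D,G)\le 1$ for a suitably large exponent $D$ to be chosen, we must produce a large $\vep$-restricted subset. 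The key dichotomy is whether $G$ contains many copies of $H_1$ or not.

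First I would handle the easy case: if $\ind_{H_1}(G)$ is small — say $\mu_{H_1}(\vep^{d},G)\le 1$ — then, since also $\mu_{\mac F}(\vep^{D},G)\le\mu_{\mac F}(\vep^{d},G)\le 1$ provided $D\ge d$, we have $\mu_{\mac F\cup\{H_1\}}(\vep^{d},G)\le 1$, so viral-ness of $\mac F\cup\{H_1\}$ directly gives an $\vep$-restricted $S$ with $|S|\ge\vep^{d}|G|\ge\vep^{D}|G|$. The substantive case is when $G$ has \emph{many} copies of $H_1$, say $\ind_{H_1}(G)>(\vep^{d}|G|)^{h_1}$. Here is where substitution structure is used: a copy of $H$ in $G$ is, roughly, a copy of $H_1$ in which the single vertex $v$ has been ``blown up'' into a copy of $H_2$ sitting inside a set of vertices all having the same attachment to the rest of the $H_1$-copy. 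Since $G$ has few copies of $H$ but many copies of $H_1$, an averaging argument produces a single copy $X$ of $H_1\setminus\{v\}$ such that the set $A$ of vertices of $G$ completing $X$ to a copy of $H_1$ is large — of size $\poly(\vep)\cdot|G|$ — yet $G[A]$ contains few copies of $H_2$ (otherwise, combining such copies with $X$ would yield too many copies of $H$). Quantitatively: $\ind_{H}(G)\ge\sum_X \ind_{H_2}(G[A_X])$ over all copies $X$, and the number of copies $X$ is at most $|G|^{h_1-1}$, so if every $G[A_X]$ had $\ge(\vep^{d}|A_X|)^{h_2}$ copies of $H_2$ we would get $\ind_H(G)$ too large once $A_X$ is large for many $X$; choosing the parameters so that $\sum_X |A_X|^{h_2}$ is controlled from below by $\ind_{H_1}(G)$ via power-mean/Jensen gives the existence of the desired $X$.

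Having found such $X$ with $|A|\ge\vep^{D'}|G|$ (some $D'$ polynomial in $d,h$) and $\mu_{H_2}(\vep^{d},G[A])\le 1$, I then note that $\mu_{\mac F}(\vep^{d},G[A])\le 1$ as well: copies of any $H'\in\mac F$ in $G[A]$ are copies in $G$, so $\ind_{H'}(G[A])\le\ind_{H'}(G)\le(\vep^{D}|G|)^{|H'|}\le(\vep^{d}|A|)^{|H'|}$ as long as $D$ is large enough relative to $D'$ and $d$ (this is the place where we burn a chunk of the exponent budget: we need $\vep^{D}|G|\le\vep^{d}|A|$, i.e.\ $\vep^{D-d}|G|\le|A|$, which holds since $|A|\ge\vep^{D'}|G|$ provided $D\ge D'+d$). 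Hence $\mu_{\mac F\cup\{H_2\}}(\vep^{d},G[A])\le 1$, and viral-ness of $\mac F\cup\{H_2\}$ yields an $\vep$-restricted $S\subseteq A\subseteq V(G)$ with $|S|\ge\vep^{d}|A|\ge\vep^{d+D'}|G|$. Taking $D:=\max(d, D'+d)$ (or the larger of the two constants arising in the two cases) makes $S$ both $\vep$-restricted and of size $\ge\vep^{D}|G|$, so $D$ is a viral exponent for $\mac F\cup\{H\}$. For ordered graphs the argument is identical word for word: copies respect the order, the substituted block $H_2$ occupies a contiguous position in the order consistent with $v$'s position, so the same averaging produces an ordered copy $X$ and an order-consistent completion set $A$, and the ordered viral hypotheses finish it.

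\textbf{Main obstacle.} The delicate point is the averaging/counting step that extracts a single copy $X$ of $H_1\setminus\{v\}$ whose completion set $A$ is \emph{simultaneously} large and $H_2$-sparse — one must track the polynomial loss in $\vep$ carefully so that the final exponent $D$ depends only on $d,h_1,h_2$ and not on $\vep$, and one must check that the ``many copies of $H_1$'' threshold and the ``$G[A]$ has few copies of $H_2$'' threshold are compatible (the product $|A|^{h_2}\cdot(\text{number of }X)$ must genuinely exceed $\ind_H(G)$'s upper bound). Handling the case where the natural averaging gives many small sets $A_X$ rather than one large one requires a dyadic/weighted pigeonhole on $|A_X|$, and getting the book-keeping right there — rather than any conceptual difficulty — is the crux; everything else is a routine transfer of the $\vep$-restricted-set machinery through an induced subgraph.
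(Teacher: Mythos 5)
Your proposal is correct and is essentially the paper's own argument: the same decomposition of a copy of $H$ into a copy of $H_1\setminus\{v\}$ plus a copy of $H_2$ inside the common completion set $A_X$, the same double counting against $\ind_{H_1}(G)$ and $\ind_H(G)$, and the same application of the two viral hypotheses (to $G$ for $\mac F\cup\{H_1\}$ and to $G[A_X]$ for $\mac F\cup\{H_2\}$), with exponent bookkeeping matching the paper's $d_1h_1+d_2+1$. The only difference is organizational: you argue directly by cases (few vs.\ many copies of $H_1$, then locating one good $X$), whereas the paper runs the identical counting as a single contradiction assuming no large $\vep$-restricted set exists; no dyadic pigeonhole is actually needed, since the simple threshold argument on $|A_X|$ suffices, as you essentially note.
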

	\begin{proof}
		The following proof works for both graphs and ordered graphs.
		If $H_1$ or $H_2$ is in $\mac F$ then $\mac F$ is viral, and so $\mac F\cup\{H\}$ is viral by \cref{lem:increasing}.
		Thus we may assume $H_1,H_2\nin\mac F$.
		For $i\in\{1,2\}$, let $h_i:=\abs{H_i}$, and let $d_i>0$ be a viral exponent for $\mac F_i:=\mac F\cup\{H_i\}$.
		We claim that $d:=d_1h_1+d_2+1$ is a viral exponent for $\mac F':=\mac F\cup\{H\}$.
		To see this, we may assume $h_1\ge2$.
		Now, let $\vep\in(0,1/2)$ and let $G$ be a graph (or ordered graph) with $\mu_{\mac F'}(\vep^d,G)\le 1$;
		and suppose for a contradiction that there is no $\vep$-restricted $S\subseteq V(G)$ with $\abs S\ge \vep^d\abs G$.
		Then $\mu_{\mac F_1}(\vep^{d_1},G)>1$ by the choice of $d_1$.
		Thus, since $\mu_{\mac F}(\vep^{d_1},G)\le \mu_{\mac F}(\vep^{d},G)\le \mu_{\mac F'}(\vep^{d},G)\le 1$,
		we deduce that $\mu_{H_1}(\vep^{d_1},G)>1$.
		It follows that
		\[\ind_{H_1}(G)>(\vep^{d_1}\abs G)^{h_1}
		=\vep^{d_1h_1}\abs G^{h_1}.\]
		For every copy $\varphi$ of $H_1\setminus v$ in $G$, let $I_{\varphi}$ be the set of copies $\varphi'$ of $H_1$ with $\varphi'\vert_{V(H_1\setminus v)}=\varphi$.
		Let $T$ be the set of copies $\varphi$ of $H_1\setminus v$ in $G$ with $\abs{I_{\varphi}}\ge\vep^{d_1h_1+1}\abs G$; then
		\[\sum_{\varphi\in T}\abs{I_{\varphi}}
		\ge \ind_{H_1}(G)-\abs G^{h_1-1}\cdot\vep^{d_1h_1+1}\abs G
		>\vep^{d_1h_1}\abs G^{h_1}
		-\vep^{d_1h_1+1}\abs G^{h_1}
		\ge \vep^{d_1h_1+1}\abs G^{h_1}.\]
		Thus $\abs T>\vep^{d_1h_1+1}\abs G^{h_1-1}\ge (\vep^d\abs G)^{h_1-1}$ since $h_1\ge2$ and $\abs{I_{\varphi}}\le\abs G$ for all $\varphi\in T$.
		\begin{claim}
			\label{claim:sub}
			For every $\varphi\in T$, there are at least $(\vep^d\abs G)^{h_2}$ copies $\varphi''$ of $H$ in $G$ with $\varphi''\vert_{V(H_1\setminus v)}=\varphi$.
		\end{claim}
		\begin{subproof}
			Let $A:=\{\varphi'(v):\varphi'\in I_{\varphi}\}$; then $\abs A\ge \vep^{d_1h_1+1}\abs G$.
			Thus, since $G$ includes no $\vep$-restricted $S\subseteq V(G)$ with $\abs S\ge\vep^d\abs G$,
			$G[A]$ contains no $\vep$-restricted $S\subseteq A$ with $\abs S\ge\vep^{d_2}\abs A\ge \vep^d\abs G$.
			The choice of $d_2$ then implies that
			$\mu_{\mac F_2}(\vep^{d_2},G[A])>1$.
			Hence, because
			\[\mu_{\mac F}(\vep^{d_2},G[A])
			\le\mu_{\mac F}(\vep^{d_1h_1+d_2+1},G)
			= \mu_{\mac F}(\vep^d,G)\le 1,\]
			we obtain $\mu_{H_2}(\vep^{d_2},G[A])>1$,		and so $\ind_{H_2}(G[A])>(\vep^{d_2}\abs A)^{h_2}\ge(\vep^d\abs G)^{h_2}$.
			Since each copy of $H_2$ in $G[A]$ together with $\varphi$ forms a copy $\varphi''$ of $H$ in $G$ with $\varphi''\vert_{V(H_1\setminus v)}=\varphi$ and these copies are distinct,
			the proof of \cref{claim:sub} is complete.
		\end{subproof}
		Now, \cref{claim:sub} yields
		\[\ind_H(G)\ge \abs T(\vep^d\abs G)^{h_2}
		>(\vep^d\abs G)^{h_1-1}(\vep^d\abs G)^{h_2}
		=(\vep^d\abs G)^{\abs H}\]
		and so $\mu_{\mac F'}(\vep^d,G)\ge \mu_H(\vep^d,G)>1$, a contradiction.
		This proves \cref{lem:sub}.
	\end{proof}
	
	\cref{lem:sub} allows us to focus our attention on prime ordered graphs.
	
	
	\section{Being divisive and being viral}
	
	For the next three sections, we will focus on ordered graphs and proving \cref{thm:mainorderedpair}. At the end of the paper, we look at its corollaries, for 
	unordered graphs, for tournaments, and for excluding one graph instead of two.  The goal of this section is to show that a finite set of graphs or ordered graphs is viral if and only if it has the property of being ``divisive''.  This will turn out to be easier to work with.
	
	Let $\mac F$ be a finite set of graphs or ordered graphs. We say that $\mac F$ is {\em weakly viral} if 
	there exists $d>0$ such that for every $\vep\in(0,\frac12)$,
	and for every graph (or ordered graph, appropiately) $G$ with $\mu_{\mac F}(\vep^d,G)\le 1$, there is a subset $S\subseteq V(G)$ with
	$\abs S\ge \vep^d\abs G$ such that  
	one of $G[S],\overline{G}[S]$ has at most $\vare\binom{|S|}{2}$ edges. (So we are not restricting the maximum degree
	in one of $G[S],\overline{G}[S]$, just the average degree.) 
	We call such a number $d$ a {\em weak viral exponent} for $\mac F$. It does not really 
	matter which definition we use, becuase of the following.
	
	\begin{lemma}\label{lem:viraleqnt}
		Let $\mac F$ be a finite set of graphs or ordered graphs.
		If $d$ is a viral exponent for $\mac F$ then it is a weak viral exponent for $\mac F$. Conversely, if $d$ is a 
		weak viral exponent for $\mac F$, then $3d$ is a viral exponent for $\mac F$. In particular, $\mac F$ is viral if and only if it is weakly viral.
	\end{lemma}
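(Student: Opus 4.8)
The plan is to prove the two implications separately, the first being essentially trivial and the second requiring a standard degree‑truncation argument.

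\medskip

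\noindent\textbf{The easy direction.} Suppose $d$ is a viral exponent for $\mac F$. Given $\vep\in(0,\tfrac12)$ and a graph (or ordered graph) $G$ with $\mu_{\mac F}(\vep^d,G)\le 1$, virality hands us an $\vep$-restricted set $S\subseteq V(G)$ with $|S|\ge\vep^d|G|$. By definition of $\vep$-restricted, one of $G[S],\overline{G}[S]$ has maximum degree at most $\vep|S|$, hence at most $\vep\binom{|S|}{2}\cdot\tfrac{2}{|S|-1}\cdot\tfrac{|S|-1}{2}$— more simply, at most $\vep|S|\cdot|S|/2\le\vep\binom{|S|}{2}\cdot\frac{|S|}{|S|-1}$. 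To avoid the off‑by‑one nuisance I would just note that a graph on $|S|$ vertices with maximum degree $\le\vep|S|$ has at most $\vep|S|^2/2$ edges, and $\vep|S|^2/2\le\vep\binom{|S|}{2}$ fails only by the factor $|S|/(|S|-1)$; one absorbs this by observing the statement is trivial for $|S|$ bounded and for large $|S|$ the bound $\le\vep\binom{|S|}{2}$ holds after discarding one vertex, or simply by using maximum degree $\le\vep|S|$ directly implies at most $\vep\binom{|S|+1}{2}$ edges which suffices up to adjusting constants inside the same $d$. In any case $S$ witnesses weak virality with the same exponent $d$.

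\medskip

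\noindent\textbf{The main direction.} Now suppose $d$ is a weak viral exponent for $\mac F$; I claim $3d$ is a viral exponent. Fix $\vep\in(0,\tfrac12)$ and $G$ with $\mu_{\mac F}(\vep^{3d},G)\le1$. Since $\vep^{3d}\le\vep^d$ we have $\mu_{\mac F}(\vep^d,G)\le1$ as well; but I want to apply weak virality with a \emph{smaller} parameter, say $\vep'$, so that the resulting low‑average‑degree set can be cleaned up to a low‑\emph{maximum}‑degree set of size still at least $\vep^{3d}|G|$. The natural choice is $\vep'=\vep^{?}$ chosen so that $(\vep')^d$ and the cleaning loss together give $\vep^{3d}$; taking $\vep'$ a suitable power of $\vep$ (of the form $\vep^{a}$ with $a$ a small absolute constant, which is why $3d$ rather than $d$ appears) makes this work. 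Apply weak virality at $\vep'$: since $\mu_{\mac F}((\vep')^d,G)\le\mu_{\mac F}(\vep^{3d},G)\le 1$ (for the appropriate choice of $a$), we get $S_0\subseteq V(G)$ with $|S_0|\ge(\vep')^d|G|$ and, say, $G[S_0]$ having at most $\vep'\binom{|S_0|}{2}\le\tfrac{\vep'}{2}|S_0|^2$ edges (replace $G$ by $\overline G$ in the other case). Hence the number of vertices of $S_0$ with degree exceeding $\vep|S_0|$ in $G[S_0]$ is at most $\vep'|S_0|^2/(\vep|S_0|)=(\vep'/\vep)|S_0|$; deleting them leaves $S\subseteq S_0$ with $|S|\ge(1-\vep'/\vep)|S_0|\ge\tfrac12|S_0|$ (since $\vep'\le\vep/2$, which holds as $\vep'$ is a higher power of $\vep$ and $\vep<\tfrac12$), and every vertex of $S$ has at most $\vep|S_0|$ neighbours in $S_0$, a fortiori at most $\vep|S_0|\le 2\vep|S|$ neighbours in $S$. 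After one more routine reindexing of $\vep$ (replace $\vep$ by $\vep/2$ at the outset, or absorb the factor $2$ into the exponent) we get $G[S]$ with maximum degree at most $\vep|S|$, i.e.\ $S$ is $\vep$-restricted, and $|S|\ge\tfrac12(\vep')^d|G|\ge\vep^{3d}|G|$ for the appropriate constant $a$ in $\vep'=\vep^a$. This gives the $\vep$-restricted set required for virality with exponent $3d$, and the final ``in particular'' follows immediately.

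\medskip

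\noindent\textbf{Main obstacle.} There is no deep obstacle; the only thing to be careful about is bookkeeping with the exponents — namely verifying that a single universal power $\vep'=\vep^{a}$ (independent of $\mac F$ and $d$, which is what forces the clean statement ``$3d$'') simultaneously satisfies $(\vep')^d\ge\vep^{3d}\cdot 2$ (to survive the factor‑$\tfrac12$ loss from cleaning) and $\vep'/\vep\le\tfrac12$ (to make the cleaning discard at most half the set) and $2\vep\le$ the target maximum‑degree bound. One checks $a=3$ works: $(\vep^3)^d=\vep^{3d}$, and since $|S|\ge\tfrac12\vep^{3d}|G|$ one can absorb the $\tfrac12$ and the degree factor $2$ by noting $\vep<\tfrac12$ so $\vep^{3d}\le\vep^{2d}/2$ for $d\ge1$, etc.; I would present this as a short explicit inequality chain rather than optimizing. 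The proof is then complete.
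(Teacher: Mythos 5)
Your argument is the same as the paper's: apply the weak viral hypothesis at a reduced density parameter and then delete the few vertices of high degree to upgrade an average-degree bound to a maximum-degree bound. In substance this is correct, but your constant bookkeeping does not quite deliver the stated conclusion as written, and one of your proposed repairs is the wrong knob. With $\vep'=\vep^3$ and deletion threshold $\vep|S_0|$ you obtain only a $2\vep$-restricted set of size at least $\tfrac12\vep^{3d}|G|$. The fix ``replace $\vep$ by $\vep/2$ at the outset'' fails: you would then need to invoke weak virality at parameter $(\vep/2)^3$, which requires $\mu_{\mac F}((\vep/2)^{3d},G)\le 1$, and this does \emph{not} follow from $\mu_{\mac F}(\vep^{3d},G)\le 1$, since $(\vep/2)^{3d}<\vep^{3d}$ and $\mu_{\mac F}(x,G)$ increases as $x$ decreases. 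The correct (and cheap) repair is to lower the deletion threshold instead: since $\vep'/\vep=\vep^2\le\tfrac14$, deleting the vertices of degree more than $\vep|S_0|/2$ removes at most $2\vep^2|S_0|\le\tfrac12|S_0|$ vertices, and the survivors have degree at most $\vep|S_0|/2\le\vep|S|$, so $S$ is genuinely $\vep$-restricted. This is exactly the paper's choice of constants: it applies weak virality at $\vep/4$ (legitimate because $\vep/4\ge\vep^3$) and thresholds at $\vep|T|/2$, so no factor $2$ ever enters the degree bound. The residual factor $\tfrac12$ in the size bound ($|S|\ge\tfrac12\vep^{3d}|G|$ rather than $\vep^{3d}|G|$) is slack that the paper's own write-up also leaves implicit; it is harmless for the ``viral if and only if weakly viral'' conclusion (one can absorb it by a unit increase of the exponent), but if you want the literal ``$3d$'' statement you should say explicitly how you dispose of it rather than gesturing at ``absorbing'' it.
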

	\begin{proof}
		The first assertion is clear. For the second, let $d$ be a weak viral exponent for $\mac F$, let $\vep\in(0,\frac12)$, 
		and let $G$ be a graph or ordered graph with 
		$\mu_{\mac H}(\vep^{3d},G)\le 1$. Since $\vare/4\ge \vare^3$ it follows that 
		$\mu_{\mac H}((\vep/4)^{d},G)\le 1$. Since $d$ is a 
		weak viral exponent for $\mac F$, there is a subset $T\subseteq V(G)$ with
		$\abs T\ge (\vep/4)^d\abs G$ such that
		one of $G[T],\overline{G}[T]$ has at most $(\vare/4)\binom{|T|}{2}$ edges, and we may assume the first.
		Consequently, at most $|T|/2$ vertices in $T$ have degree in $G[T]$ more than $\vare|T|/2$;
		and so there exists $S\subseteq T$ with $|S|\ge |T|/2$ such that every vertex in $S$ has degree at most $\vare|T|/2\le \vare|S|$
		in $G[T]$ and hence in $G[S]$. Thus $S$ is $\vare$-restricted. This proves \cref{lem:viraleqnt}.
	\end{proof}

	We say that a finite set $\mac F$ of graphs is ``divisive'', if every graph $G$ either contains  many 
	copies of some member of $\mac F$, or admits a blockade that is both long and wide, and either sparse or dense. More exactly,
	$\mac F$ is {\em divisive}
	if there exist $b,c>0$ such that for every $x\in(0,c)$ and every graph $G$ with $\mu_{\mac F}(x^b,G)\le1$,
	there is an $x$-sparse or $(1-x)$-dense $(k,\floor{\abs G/k^b})$-blockade in $G$ where $k\in[2,1/x]$.
	(This property is a variant of the so-called {\em quasi-\erh{} property} employed in~\cite{tomon2020,MR4563865,MR4273057}.)
	Similarly, a finite set $\mac F$ of ordered graphs is {\em divisive}
	if there exist $b,c>0$ such that for every $x\in(0,c)$ and every ordered graph $G$ with $\mu_{\mac F}(x^b,G)\le1$,
	there is an $x$-sparse or $(1-x)$-dense $(k,\floor{\abs G/k^b})$-blockade in $G$ where $k\in[2,1/x]$.
	We call $(b,c)$ a pair of {\em divisive sidekicks} for $\mac F$.
	
	In this section we show that all finite divisive sets are viral; this is a key result, and will be crucial in the 
	proof of~\cref{thm:viral}. We will need the following:
	\begin{theorem}\label{transfer}
		Let $G$ be a graph, and let $\vare\in(0,\frac12)$ and $d\ge 1$. Let $x=\vare^{12d}$.
		Suppose that for every induced subgraph $F$ of $G$ with $|F|\ge \vare^{4d}|G|$, there is an
		$x$-sparse or $(1-x)$-dense blockade in $F$ of length $k\in[2,1/x]$ and width at least
		$\abs F/k^d$. Then
		there exists $S\subseteq V(G)$ with $\abs S\ge x^{d+1}\abs G$ such that one of $G[S],\overline{G}[S]$
		has at most $\vare\binom{|S|}{2}$ edges.
	\end{theorem}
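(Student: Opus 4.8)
The plan is to show it suffices to produce, inside $G$, a single $x$-sparse or $(1-x)$-dense blockade with at least $\lceil 2/\vare\rceil$ cells and width at least $x^{d+1}\abs G$. Indeed, truncating all cells to the common width $w$ and letting $T$ be their union, a short computation gives (in the sparse case, say) $\edge(G[T])\le k\binom w2 + x\binom k2 w^2 \le (1/k + 2x)\binom{|T|}{2} \le \vare\binom{|T|}{2}$, since $k\ge 2/\vare$ and $x=\vare^{12d}\le\vare/4$; and $|T|=kw\ge kx^{d+1}\abs G\ge x^{d+1}\abs G$. The dense case is the same with $\overline G$. So from now on the goal is to build such a blockade.

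\textbf{Building the blockade by iterative sparsification.}
First I would observe that if at any stage the blockade supplied by the hypothesis (for some large $F$) already has length $k\ge \lceil2/\vare\rceil$, we are done directly (its width is $\ge |F|/k^d\ge |F|x^d$, which is $\ge x^{d+1}\abs G$ once $|F|\ge\vare^{4d}\abs G$, using $d\ge1$ and $\vare<\tfrac12$). So assume every blockade met has length $k<1/\vare$. Now I would run a recursion $\Phi(F,r)$ with a \emph{residual target} $r\in[1,\lceil 2/\vare\rceil]$: apply the hypothesis to $F$ (legitimate whenever $|F|\ge\vare^{4d}\abs G$) to get a blockade $(B_1,\dots,B_k)$ of length $k\in[2,1/x]$ and width $\ge|F|/k^d$; if $k\ge r$ return this blockade; otherwise recurse on each $G[B_i]$ with residual target $\lceil r/k\rceil$ and reassemble the returned blockades by concatenation (in the order of the $B_i$). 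The key point, which makes the size–length tradeoff telescope, is that along any root-to-leaf path the product of the $k$'s first reaches $r_0:=\lceil2/\vare\rceil$ exactly at a leaf and stays below $r_0/\vare<4/\vare^2$; hence every \emph{internal} node has $|F|>\abs G/r_0^d\ge (\vare/4)^d\abs G\ge \vare^{4d}\abs G$ (so the hypothesis always applies), every leaf cell has size $>(\vare^2/4)^d\abs G\ge x^{d+1}\abs G$, and the reassembled blockade has $\ge r_0$ cells of that width. Crucially, concatenating across levels only weakens $x$-sparsity between two leaf-cells lying in different subtrees to $x\cdot(\text{their common ancestor's cell size})/(\text{leaf cell size})\le x\cdot\abs G/((\vare^2/4)^d\abs G)\le\vare^{10d}\le\vare^2$; so the final blockade is $\vare^2$-sparse between cells, which by the reduction above suffices.

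\textbf{Handling the sparse/dense dichotomy.}
The one genuinely delicate issue — and what I expect to be the main obstacle — is that a recursive call may return a blockade of the \emph{opposite} type to the one the parent blockade has, and such pieces cannot be concatenated. I would fix a ``mode'' (sparse or dense) at the top, using symmetry under $G\mapsto\overline G$, and handle a wrong-type return as follows: a $(1-x)$-dense blockade with $k''\ge 2$ cells on a set $D$ has $\edge(G[D])\ge(1-1/k''-2x)\binom{|D|}2$. If $k''\ge\lceil2/\vare\rceil$, this $D$ is itself a legitimate final answer (its complement is $\vare$-sparse and $|D|$ is large enough, exactly as in the direct case), so output it; if $k''<\lceil2/\vare\rceil$, restart the construction inside $D$ in the opposite mode, but with residual target $\lceil r_0/k''\rceil$ rather than $r_0$, so that the restart inherits the $k''$ cells already found. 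Since each restart at least halves the residual target and passes to a strictly smaller subgraph, one has to bound the total number of restarts and check that, through all of them interleaved with the recursion, subgraph sizes stay above $\vare^{4d}\abs G$ while the final blockade width stays above $x^{d+1}\abs G$ — this is where the generous gaps among $\vare^{4d}$, $\vare^{12d}$ and $\vare^{12d(d+1)}$ are used. Getting this bookkeeping to close (in particular, reconciling the fact that count-progress is lost at a mode switch with the limited size budget) is the part of the argument that will require the most care, and is the natural place to tune the constants in the exponents.
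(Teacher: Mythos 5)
Your skeleton --- iterate the divisiveness hypothesis, let the width guarantee $\abs F/k^d$ telescope along a branch, note that concatenating blockades across levels only degrades sparsity by the ratio of an ancestor cell to a leaf cell, and finish by extracting an $\vare$-sparse or $\vare$-dense set from one long blockade --- is the same strategy as the paper's, but two steps do not work as written. The smaller problem is your opening reduction: you truncate all cells to the common width $w$ and claim the cross-cell contribution is at most $x\binom k2 w^2$. Sparsity in this paper is a per-vertex bound relative to the \emph{original} cell sizes (each vertex of $B_j$ has at most $x\abs{B_i}$ neighbours in $B_i$), so after deterministic truncation a pair of truncated cells can be complete bipartite; this bites exactly in your ``done directly'' case, where $k$ may be as large as $1/x$ and a cell can exceed its guaranteed width by a factor $k^d$, so that $x\abs{B_i}\gg w$. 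The paper avoids this by choosing the $w$-subsets uniformly at random (the sampling step after \cref{claim:transfer2}), using that the \emph{total} number of edges between $B_i$ and $B_j$ is at most $x\abs{B_i}\cdot\abs{B_j}$; your computation needs that fix.

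The serious gap is the one you flag yourself: the sparse/dense interleaving. In your scheme a mode is fixed at the top and, as you say, ``count-progress is lost at a mode switch''. With that rule there is no termination argument at all, and no tuning of exponents supplies one: the hypothesis may alternate, returning a dense blockade of length $2$ whenever you are in sparse mode and a sparse blockade of length $2$ whenever you are in dense mode; each switch resets the residual target to about $r_0/2=\Theta(1/\vare)$ while shrinking the host set by a factor at least $2^d$, so after $O(\log(1/\vare))$ oscillations the polynomial size budget $\vare^{O(d)}\abs G$ is exhausted with the target unmet. What is needed is to retain \emph{both} kinds of progress simultaneously along every branch, and this is precisely the device in the paper's proof: it grows a layout $(J,(A_j:j\in V(J)))$ with $J$ a cograph, substituting a stable set or a clique for a vertex of $J$ according to the type of the blockade returned, maintains the global invariant that wrong pairs number at most $x$ times decided pairs together with the potential $\sum_j\abs{A_j}^{1/d}\ge\abs G^{1/d}$ (which keeps the largest cell above $\vare^{4d}\abs G$), and only at the end extracts a homogeneous-type family, using that a cograph on more than $4\vare^{-2}$ vertices has a clique or stable set of size $2/\vare$. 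Equivalently, your recursion could be repaired by carrying a sparse residual and a dense residual along each branch (each call to the hypothesis divides one of them by $k\ge2$, so branches have length $O(\log(1/\vare))$ and all host sets stay above $\vare^{4d}\abs G$), together with the asymmetric assembly rule that a dense success anywhere is a global success while a sparse success must be delivered by every child of a sparse step; supplying that mechanism, not tuning constants, is what is missing from your sketch.
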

	\begin{proof}
		We may assume that $|G|>x^{-d-1}$, since otherwise we may take $|S|\le 1$
		to satisfy the theorem.
		A {\em cograph} is a $P_4$-free graph.
		Let $J$ be a cograph on vertex set $\{1,2,\ldots,\abs J\}$, and for each $j\in V(J)$ let $A_j\subseteq V(G)$, pairwise disjoint. We call $\mathcal{L}=(J,(A_j:j\in V(J)))$
		a {\em layout}. A pair $\{u,v\}$ of distinct vertices of $G$ is {\em undecided} for a layout $(J,(A_j:j\in V(J)))$
		if there exists $j\in V(J)$ with $u,v\in A_j$; and {\em decided} otherwise. Thus, all pairs $\{u,v\}$ with $u\notin \bigcup_{j\in V(J)}A_j$ are decided.
		A decided pair $\{u,v\}$ is {\em wrong} for $(J,(A_j:j\in V(J)))$ if there are distinct
		$i,j\in V(J)$ such that $u\in A_i$, $v\in A_j$, and either
		\begin{itemize}
			\item $u,v$ are adjacent in $G$ and $i,j$ are nonadjacent in $J$; or
			\item $u,v$ are nonadjacent in $G$, and $i,j$ are adjacent in $J$.
		\end{itemize}
		We are interested in layouts in which the number of wrong pairs is only a small fraction of the number of decided pairs.
		Choose a layout $\mathcal{L}=(J,(A_j:j\in V(J)))$ satisfying the following:
		\begin{itemize}
			\item $|A_j|\ge \vare^{6d}|G|$ for each $j\in V(J)$;
			\item $\sum_{j\in V(J)}|A_j|^{1/d}\ge |G|^{1/d}$;
			\item the number of wrong pairs is at most $x$ times the number of decided pairs; and
			\item subject to these three conditions, $|J|$ is maximum.
		\end{itemize}
		(This is possible since we may take $\abs J=1$ and $A_1=G$ to satisfy the first three conditions.)
		\begin{claim}\label{claim:transfer1}
			\em We may assume that $|J|\le 4\vare^{-2}$.
		\end{claim}
		\begin{subproof}
			Suppose that $|J|\ge 4\vare^{-2}$. Since $J$ is a cograph, it has a clique or stable set $I$ of size at least $|J|^{1/2}\ge 2/\vare$,
			and by taking complements if necessary, we may assume that $I$ is a stable set. For each $i\in I$, choose
			$B_i\subseteq A_i$ with size $\lceil\vare^{6d}|G|\rceil$,
			and let $S=\bigcup_{i\in I}B_i$. Thus $|S|\ge (2\vare^{-1})\vare^{6d}|G|$.
			We claim that $G[S]$ has edge-density at most $\vare$. There are at most $|I|^{-1}\binom{|S|}{2}$ edges $uv$ of $G[S]$ such that
			$u,v\in B_i$
			for some $i\in I$; and the number of edges $uv$ of $G[S]$
			such that $u\in B_i$ and $v\in B_j$ for some distinct $i,j\in I$ is at most the number of wrong pairs of
			$\mathcal{L}$, and hence at most
			$$x\binom{G}{2}\le x|G|^2/2\le x(\vare^{1-6d}|S|/2)^2/2=x\vare^{2-12d}|S|^2/8\le x\vare^{2-12d}\binom{|S|}{2}/2.$$
			Hence the number of edges of $G[S]$ is at most
			$(|I|^{-1}+x\vare^{2-12d}/2)\binom{|S|}{2}\le \vare \binom{|S|}{2}$ since $|I|^{-1}\le \vare/2$ and $x\vare^{2-12d}/2\le \vare/2$.
			Moreover,
			$$|S|\ge \vare^{6d}|G|\ge x^{d+1}|G|,$$
			and so the theorem is satisfied.
			This proves \cref{claim:transfer1}.
		\end{subproof}

		We may assume that $|A_1|\ge |A_j|$ for all $j\in V(J)$. Since $\sum_{j\in V(J)}|A_j|^{1/d}\ge |G|^{1/d}$,
		and $|J|\le 4\vare^{-2}$ by \cref{claim:transfer1}, it follows that
		$|A_1|^{1/d}\ge (\vare^2/4)|G|^{1/d}$, that is,
		$$|A_1|\ge \vare^{2d}2^{-2d}|G|\ge \vare^{4d}|G|.$$

		By applying the hypothesis to
		$G[A_1]$,
		we deduce that
		there is an $x$-sparse or $(1-x)$-dense blockade $(B_1\LL B_k)$ in $G[A_1]$ where $k\in[2,1/x]$, with width at least
		$|A_1|/k^d$.
		By taking complements, we may assume that $(B_1\LL B_k)$ is $x$-sparse.
		\begin{claim}\label{claim:transfer2}
			$k\ge 2/\vare$.
		\end{claim}
		\begin{subproof}
			Suppose that $k\le 2/\vare\le \vare^{-2}$. Then each of the sets $B_1\LL B_k$ has size at least
			$|A_1|/k^d\ge \vare^{2d} |A_1|$.
			By substituting a $k$-vertex stable set for
			the vertex $1$ in $J$, and replacing $A_1$ by $B_1\LL B_k$, we obtain a new layout $\mathcal{L}'=(J', (A_j':j\in V(J')))$ say,
			where $|J'|>|J|$. We claim that this violates the choice of $\mathcal{L}$; and so we must verify that $\mathcal{L}'$ satisfies the
			first three bullets in the definition of $\mathcal{L}$.
			Each $B_j$ satisfies
			$$|B_j|\ge \vare^{2d} |A_1|\ge \vare^{6d} |G|,$$
			and so the first bullet is satisfied.
			For the second bullet, since $B_1\LL B_k$ all have size at least $|A_1|/k^d$, it follows that
			$$|B_1|^{1/d}+\cdots+|B_k|^{1/d}\ge |A_1|^{1/d},$$
			and so $\sum_{j\in V(J)}|A_j'|^{1/d}\ge |G|^{1/d}$.
			For the third bullet, let
			$P$ be the set of all decided pairs for $\mathcal{L}$, and $Q\subseteq P$ the set of wrong pairs for $\mathcal{L}$,
			and define $P',Q'$ similarly for $\mathcal{L}'$. Then $|Q|\le x|P|$. Let $R$ be the
			set of all pairs $\{u,v\}$ with $u,v\in A_1$ such that $u,v$ belong to different blocks
			of $(B_1\LL B_k)$. Then $R\subseteq  P'\setminus P$; and $Q'\setminus Q\subseteq R$; and $|Q'\setminus Q|\le x|R|$
			since $(B_1\LL B_k)$ is $x$-sparse. Hence
			$|Q'\setminus Q|\le x|P'\setminus P|$, and so
			$$|Q'|\le |Q|+|Q'\setminus Q|\le x|P|+ x|P'\setminus P|= x|P'|$$
			since $P\subseteq P'$.
			This contradicts the choice of $\mathcal{L}$, and so proves \cref{claim:transfer2}.
		\end{subproof}

		Let $n=\lceil 2/\vare\rceil$. For $1\le i\le n$, choose $C_i\subseteq B_i$ with size $w:=\ceil{|A_1|/k^d}$, uniformly at random.
		The probability that an edge between $B_i,B_j$ has ends in $C_i$ and $C_j$ is $\frac{w^2}{|B_i|\cdot|B_j|}$, and since there are
		at most $x|B_i|\cdot|B_j|$ edges between $B_i, B_j$, the expected number
		of edges between $C_i,C_j$ is at most $xw^2$. Markov's inequality then implies that the probability that there are more than $xn^2w^2/2$ such
		edges is less than $2/n^2$. It follows that the probability that for all distinct $i,j\in \{1\LL n\}$, there are at most
		$xn^2w^2/2$ edges between $C_i,C_j$ is positive, and so there is a choice of $C_1\LL C_n$ such that for all distinct
		$i,j$ there are at most
		$xn^2w^2/2$ edges between $C_i,C_j$. Let $S=C_1\cupcup C_n$. The number of edges of $G[S]$
		with ends in the same $C_i$ is at most $(1/n)\binom{|S|}{2}$; and the number of edges of $G[S]$ with ends in distinct blocks $C_i,C_j$
		is at most $(xn^2w^2/2)(n^2/2)=xn^2|S|^2/4\le xn^2\binom{|S|}{2} $. Consequently $G[S]$ has at most
		$(1/n+xn^2)\binom{|S|}{2}\le \vare \binom{|S|}{2}$ edges, since
		$1/n\le \vare/2$ and
		$xn^2\le x(4/\vare)^2\le \vare/2$.
		Moreover,
		$$|S|\ge w\ge |A_1|/k^d\ge \vare^{4d}|G|/k^d\ge \vare^{4d}x^d|G|\ge x^{d+1}|G|,$$
		and hence $S$ satisfies the theorem.
		This proves \cref{transfer}.
	\end{proof}
	
	
	This is used to prove 
	the following:
	\begin{theorem}
		\label{thm:divisive}
		If $\mac F$ is a finite set of graphs or ordered graphs, then it is divisive if and only if it is viral.
	\end{theorem}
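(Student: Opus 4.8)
The plan is to prove both implications. The forward direction, that divisive implies viral, is the substantial one, and it is exactly what \cref{transfer} was built for. Suppose $\mac F$ is divisive with divisive sidekicks $(b,c)$. Given $\vep\in(0,\tfrac12)$ and a graph (or ordered graph) $G$ with $\mu_{\mac F}(\vep^{d},G)\le 1$ for a suitably large exponent $d$ to be chosen, I want to produce an $\vep$-restricted set of size at least $\vep^{d}\abs G$. By \cref{lem:viraleqnt} it suffices to find $S\subseteq V(G)$ with $\abs S\ge \vep^{d}\abs G$ such that one of $G[S],\overline G[S]$ has at most $\vep\binom{\abs S}{2}$ edges (i.e.\ to prove the weak viral property); this is precisely the conclusion of \cref{transfer}. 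So I would set $x=\vep^{12d'}$ for an appropriate intermediate parameter $d'$, and check the hypothesis of \cref{transfer}: for every induced subgraph $F$ of $G$ with $\abs F\ge \vep^{4d'}\abs G$, I need an $x$-sparse or $(1-x)$-dense blockade in $F$ of length $k\in[2,1/x]$ and width at least $\abs F/k^{d'}$. This is where divisiveness enters: provided $x<c$ (which holds once $\vep$ is small, and the finitely many large $\vep$ can be absorbed by adjusting the exponent or handled trivially since then a single vertex suffices), and provided $\mu_{\mac F}(x^{b},F)\le 1$, divisiveness yields exactly such a blockade with $d'=b$. The point is that $\mu_{\mac F}(x^{b},F)\le \mu_{\mac F}(x^{b},G)$ — monotonicity of $\ind_{\mac F}$ under taking induced subgraphs, together with $\abs F\le \abs G$ in the denominator — and $x^{b}=\vep^{12bd'}$; so I choose $d$ large enough (relative to $b$ and the $\vep$-versus-$\vep/4$ slack used in \cref{lem:viraleqnt}) that $\mu_{\mac F}(\vep^{d},G)\le 1$ forces $\mu_{\mac F}(x^{b},F)\le 1$. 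Then \cref{transfer} hands back $S$ with $\abs S\ge x^{b+1}\abs G$, which is $\vep^{12b(b+1)}\abs G$, polynomial in $\vep$ as required. Tracking these exponents carefully and invoking \cref{lem:viraleqnt} to pass from the weak conclusion back to an honest $\vep$-restricted set completes this direction.

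For the converse, that viral implies divisive, I would argue more directly. Suppose $\mac F$ is viral with viral exponent $d$. Given $x\in(0,c)$ for a small constant $c$ to be named, and a graph $G$ with $\mu_{\mac F}(x^{b},G)\le 1$ for a suitable $b$, I want an $x$-sparse or $(1-x)$-dense $(k,\lfloor\abs G/k^{b}\rfloor)$-blockade with $k\in[2,1/x]$. Apply virality with $\vep=x$ (after checking $x<\tfrac12$): since $\mu_{\mac F}(x^{d},G)\le \mu_{\mac F}(x^{b},G)\le 1$ when $b\le d$, we get an $x$-restricted $S\subseteq V(G)$ with $\abs S\ge x^{d}\abs G$. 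Without loss of generality $G[S]$ has maximum degree at most $x\abs S$. Now I simply split $S$ into $k=2$ consecutive blocks $B_1,B_2$ (in the ordered case, using the linear order; in the unordered case, arbitrarily) of size $\lfloor\abs S/2\rfloor$ each. Every vertex of $B_2$ has at most $x\abs S\le 2x\abs{B_1}$ neighbours in $B_1$; so after discarding at most half the vertices (the standard outlier-removal argument, or just reabsorbing the factor), $(B_1,B_2)$ is, say, $(2x)$-sparse — and one rescales $x$ by a constant, permitted by the freedom in choosing $c$ and $b$, to make it $x$-sparse. Its width is at least $\lfloor\abs S/4\rfloor\ge \lfloor x^{d}\abs G/4\rfloor\ge \lfloor\abs G/2^{b}\rfloor$ once $b$ is chosen large enough relative to $d$ and $c$ small enough, and $k=2\in[2,1/x]$. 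This yields the required blockade, so $\mac F$ is divisive.

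The main obstacle is entirely in the forward direction: it is the bookkeeping of exponents so that the single parameter $d$ in the definition of ``viral'' is large enough to simultaneously feed \cref{transfer} (which needs $x=\vep^{12d'}$ and delivers size $x^{d'+1}\abs G$), absorb the constant-factor losses in \cref{lem:viraleqnt} (the $\vep/4$ versus $\vep$ slack and the factor-$2$ degree loss), and cope with the range $\vep\in(0,\tfrac12)$ rather than $\vep$ small — the last handled by observing that for $\vep$ bounded below, $\vep^{d}\abs G\le 1$ once $d$ is large, so a one-vertex set vacuously works. None of these steps is deep, but they must be arranged consistently; I would introduce the intermediate exponent $d'=b$ (the divisive sidekick) explicitly and define $d$ as a fixed polynomial in $b$ at the outset to keep the chain of inequalities transparent.
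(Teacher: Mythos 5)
Your forward direction (divisive $\Rightarrow$ viral) follows the paper's route: reduce to the weak viral property via \cref{lem:viraleqnt} and verify the hypothesis of \cref{transfer} by applying divisiveness inside large induced subgraphs. One step there is mis-stated, though: the inequality $\mu_{\mac F}(x^{b},F)\le\mu_{\mac F}(x^{b},G)$, justified by ``$\abs F\le\abs G$ in the denominator'', is false as written --- a smaller denominator makes $\mu$ \emph{larger}, not smaller (take $F$ a small induced subgraph containing a copy of some $H\in\mac F$). The correct comparison must use the lower bound $\abs F\ge\vep^{4d'}\abs G$, i.e.\ one needs $x^{b}\abs F\ge\vep^{d}\abs G$, which is exactly how the paper argues in \cref{claim:divisive}; your later sentence about choosing $d$ large enough that $\mu_{\mac F}(\vep^{d},G)\le1$ forces $\mu_{\mac F}(x^{b},F)\le1$ is the right target, so this is repairable, but the stated monotonicity is not the reason it holds. (You also need a small cushion for the floor in $\floor{\abs F/k^{b}}$, which is why the paper passes from $d$ to $d'=d+1$.)

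The genuine gap is in your converse (viral $\Rightarrow$ divisive) direction: taking $k=2$ cannot work. The definition of divisive demands width at least $\floor{\abs G/k^{b}}$, so with $k=2$ you must produce two blocks each of size at least $\floor{\abs G/2^{b}}$, a \emph{constant} fraction of $\abs G$ independent of $x$. But virality only hands you a restricted set of size about $x^{d}\abs G$, and your claimed chain $\floor{x^{d}\abs G/4}\ge\floor{\abs G/2^{b}}$ ``once $b$ is chosen large enough'' fails: $b$ and $d$ are fixed before $x$, and $x\in(0,c)$ can be arbitrarily small, so $x^{d}\abs G/4\ll\abs G/2^{b}$. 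The point of the definition is that the width requirement shrinks with $k$, so you must take $k$ growing as $x\to0$; the paper takes $k=\ceil{x^{-1/2}}$, applies virality with parameter $x^{2}$ (so the restricted set has maximum degree at most $x^{2}\abs S$), and cuts it into $k$ blocks of size $\floor{2x\abs S}$, which makes each block automatically $x$-sparse (or $(1-x)$-dense) to the others and meets the width bound $\floor{\abs G/k^{4d+2}}$. Your ``rescale $2x$ to $x$'' step is a second, smaller, instance of the same issue (the sparsity parameter, the exponent and the range $k\in[2,1/x]$ all involve the same $x$), and is cured by the same device of applying virality at a strictly smaller parameter such as $x^{2}$ or $x/2$ rather than at $x$ itself.
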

	\begin{proof}
		We will only need the ``only if'' direction, but the ``if'' direction of this theorem is simple. To see this,
		we assume that  $\mac F$ is a viral set of graphs, or of ordered graphs.
		Let $d$ be a viral exponent for $\mac F$. Let $x>0$ with $x\le \min(1/16,1/d)$;
		and let $G$ be a graph or ordered graph with $\mu_{\mac F}(x^{2d},G)\le 1$. There exists
		an $x^2$-restricted $S\subseteq V(G)$ with $\abs S\ge x^{2d}\abs G$.
		Let $k:=\ceil{x^{-1/2}}\in[2,1/x]$, and
		choose a sequence $(B_1,\ldots,B_k)$ of disjoint subsets of $S$, all  of size $\floor{2x\abs S}$ (such subsets exist since
		$k\floor{2x\abs S}\le 4x^{1/2}\abs S\le\abs S $).
		Then $(B_1,\ldots,B_k)$ is an $x$-sparse or $(1-x)$-dense $(k,\floor{\abs G/k^{4d+2}})$-blockade in $G$ 
		(since $\floor{x^2\abs S}\le x\floor{2x\abs S}$). This proves the ``if'' direction.
		
		For the ``only if'' direction, we assume that $\mac F$ is a set of graphs or ordered graphs.
		Let $(b_0,c_0)$ be a pair of divisive sidekicks for $\mac F$, and let $d=\max(b_0,1/c_0)$ (so $(d,1/d)$ is also a pair 
		of divisive sidekicks for $\mac F$).
		Let $c:=12(d+1)(d+2)$. We claim that $c$ is a weak viral exponent for $\mac F$.
		
		To show this,
		let $\vare\in(0,\frac12)$
		and let $G$ be a graph (or ordered graph, if $\mac F$ is a set of ordered graphs) with
		$\mu_{\mac F}(\vare^c,G)\le1$. We must show that there exists $S\subseteq V(G)$ with $\abs S\ge \vare^c\abs G$ such that
		one of $G[S],\overline{G}[S]$ has at most $\vare\binom{|S|}{2}$ edges.
		We may assume that $|G|>\vare^{-c}$, since otherwise we may take $|S|\le 1$.
		Let $d'=d+1$, and $x=\vare^{12d'}$. We claim that:
		\begin{claim}
			\label{claim:divisive}
			For every induced subgraph $F$ of $G$ (or of $G\nat$, if $G$ is ordered) with $|F|\ge \vare^{4d'}|G|$, there is an
			$x$-sparse or $(1-x)$-dense blockade in $F$ of length $k\in[2,1/x]$ and width at least
			$\abs F/k^{d'}$.
		\end{claim}
		\begin{subproof}
			We observe that $x^d\vare^{4d'}=\vare^{12dd'+4d'}\ge \vare^c$, and so $x^d|F|\ge \vare^c|G|$.
			It follows that
			$$\mu_{\mac F}(x^d,F)\le \mu_{\mac F}(\vare^c,G)\le1.$$
			Since $d$ is a sidekick for the divisiveness of $\mac F$,
			there exists
			$k\in[2,1/x]$ such that there is an $x$-sparse or $(1-x)$-dense blockade in $F$ of length at least $k$ and width at least
			$\floor{\abs F/k^d}$.
			But
			$$|F|/k^d\ge x^d|F|\ge \vare^c|G|> 1,$$
			and so
			$$\floor{\abs F/k^d}\ge \abs F/(2k^d)\ge \abs F/k^{d+1}=|F|/k^{d'}.$$
			This proves \cref{claim:divisive}.
		\end{subproof}
		From \cref{transfer}, with $d$ replaced by $d'$, we deduce that
		there exists $S\subseteq V(G)$ with $\abs S\ge x^{d'+1}\abs G=\vare^c|G|$ such that one of $G[S],\overline{G}[S]$
		has at most $\vare\binom{|S|}{2}$ edges. 
		This proves that $\mac F$ is weakly viral, and hence viral by \cref{lem:viraleqnt}, and so proves \cref{thm:divisive}.
	\end{proof}

	\section{Using the leaf}

	In this section, we are given an ordered graph $H$ with a vertex $v$ of degree one that is an end 
	vertex of $H$, and a sparse ``host'' ordered graph $G$,
	and we would like to show that either:
	\begin{itemize}
		\item $G$ contains many copies of $H$; or
		
		\item we can locate a subset of $V(G)$ of decent size that induces an ordered subgraph containing not too many copies of $H\setminus \{v\}$; or
		
		\item there is a sparse blockade in $G$ with length and width polynomially related to the sparsity parameter.
	\end{itemize}
	To do this, we shall first prove this with the third outcome replaced by
	\begin{itemize}
		\item there are disjoint subsets  $A,B$ of $V(G)$ where $B$ is sparse to $A$, and $A$ has decent size, and $|B|$ contains almost all vertices of $G$.
	\end{itemize}
	Then, by iterating the procedure,  we will convert the two subsets of this last outcome into a sparse blockade of the desired length and width.
	
	We obtain sparse pairs by means of the following lemma:
	\begin{lemma}
		\label{lem:sparse1}
		Let $H$ be an ordered graph with an end vertex $v$ of degree one. Let $h:=\abs H\ge3$, and let $H':=H\setminus \{v\}$.
		Let $x,y>0$ with $x\le y\le\frac1{2h}$,
		and let $G$ be an ordered graph with maximum degree at most $y\abs G$.
		Then for every $a\ge2$, one of the following outcomes hold:
		\begin{itemize}
			\item $\ind_{H}(G)> x^{2a+h}\abs G^{h}$;
			
			\item there exists $S\subseteq V(G)$ with $|S|\ge y|G|$ such that $\ind_{H'}(G[S])\le y^{a-2}\abs S^{h-1}$; or
			
			\item there are disjoint $A,B\subseteq V(G)$ such that $\abs A\ge {y^a\abs G}$, $\abs B\ge(1-hy)\abs G$, and $B$ is $x$-sparse to $A$.
		\end{itemize}
	\end{lemma}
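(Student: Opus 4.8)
\medskip
\noindent\textbf{Proof plan for \cref{lem:sparse1}.}
The plan is to carry out the argument outlined in \cref{sec:sketch}: use the degree-one end vertex $v$ to convert a surplus of copies of $H'=H\setminus\{v\}$ inside an initial segment of $G$ either into a surplus of copies of $H$ in $G$, or into a sparse pair. Reversing the linear orders of $G$ and $H$ simultaneously leaves the hypotheses and all three conclusions unchanged (the relation ``$B$ is $x$-sparse to $A$'' does not mention the order), so I may assume $v$ is the \emph{last} vertex of $H$. I may also assume $\abs G\ge 2/y$: otherwise $y^a\abs G<2y^{a-1}\le 2y\le\tfrac1h<1$ (using $a\ge 2$ and $y\le\tfrac1{2h}$), so $\lfloor y^a\abs G\rfloor=0$ and the third outcome holds with $A=\emptyset$ and $B=V(G)$.

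Let $S$ be the set of the first $\lceil y\abs G\rceil$ vertices of $G$, so $y\abs G\le\abs S\le y\abs G+1$. If $\ind_{H'}(G[S])\le y^{a-2}\abs S^{h-1}$ the second outcome holds, so assume $\ind_{H'}(G[S])>y^{a-2}\abs S^{h-1}$. Let $u$ be the unique neighbour of $v$ in $H$, and put $H'':=H\setminus\{u,v\}$, so $\abs{H''}=h-2$ and $H'$ is $H''$ together with $u$. Each copy of $H'$ in $G[S]$ restricts to a copy of $H''$ in $G[S]$; for a copy $X$ of $H''$ in $G[S]$, let $A(X)$ be the set of $w\in S$ such that $X$ extended by $w$ in the role of $u$ is a copy of $H'$ in $G[S]$, so that $\ind_{H'}(G[S])=\sum_X\abs{A(X)}$, a sum of at most $\abs S^{h-2}$ terms. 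Hence the set $T:=\{X:\abs{A(X)}\ge\tfrac12 y^{a-2}\abs S\}$ satisfies
\[\sum_{X\in T}\abs{A(X)}\ >\ y^{a-2}\abs S^{h-1}-\abs S^{h-2}\cdot\tfrac12 y^{a-2}\abs S\ =\ \tfrac12 y^{a-2}\abs S^{h-1},\]
and every $X\in T$ has $\abs{A(X)}\ge\tfrac12 y^{a-2}\abs S\ge\tfrac12 y^{a-1}\abs G\ge y^a\abs G\ge\lfloor y^a\abs G\rfloor$ (using $y\le\tfrac12$).

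Now fix $X\in T$, and let $B_0(X)$ be the set of vertices of $V(G)\setminus S$ with no neighbour in $V(X)$; since $G$ has maximum degree at most $y\abs G$ and $\abs{V(X)}=h-2$,
\[\abs{B_0(X)}\ \ge\ \abs G-\abs S-(h-2)y\abs G\ \ge\ (1-(h-1)y)\abs G-1.\]
The role of the leaf is that \emph{every edge of $G$ between $A(X)$ and $B_0(X)$ extends $X$ to a copy of $H$ in $G$}: the end in $A(X)$ plays $u$, and the end $w'\in B_0(X)$ plays $v$, since $w'$ is adjacent to the $u$-vertex, non-adjacent to $V(X)$ (matching $\deg_H v=1$), and lies after $V(X)$ and the $u$-vertex in $\le_G$ (those lie in the initial segment $S$, while $w'\notin S$ and $v$ is the last vertex of $H$). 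Distinct edges, and distinct members of $T$, give distinct copies of $H$ (a copy of $H$ determines its restriction $X$ to $V(H'')$ and the images of $u$ and $v$), so if $e(X)$ denotes the number of edges of $G$ with one end in $A(X)$ and the other in $B_0(X)$, then $\ind_H(G)\ge\sum_{X\in T}e(X)$. Suppose now that the first and third outcomes both fail. For $X\in T$, delete from $B_0(X)$ the (fewer than $e(X)/(x\abs{A(X)})$) vertices with more than $x\abs{A(X)}$ neighbours in $A(X)$; what remains is $x$-sparse to $A(X)$, and as $\abs{A(X)}\ge\lfloor y^a\abs G\rfloor$ and the third outcome fails, what remains has fewer than $(1-hy)\abs G$ vertices, so
\[\frac{e(X)}{x\abs{A(X)}}\ >\ \abs{B_0(X)}-(1-hy)\abs G\ \ge\ y\abs G-1\ \ge\ \tfrac12 y\abs G,\]
the last step using $\abs G\ge 2/y$. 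Hence $e(X)>\tfrac12 xy\abs G\cdot\abs{A(X)}$ for all $X\in T$, so
\[\ind_H(G)\ \ge\ \sum_{X\in T}e(X)\ >\ \tfrac12 xy\abs G\sum_{X\in T}\abs{A(X)}\ >\ \tfrac14 xy^{a-1}\abs G\cdot\abs S^{h-1}\ \ge\ \tfrac14 xy^{a+h-2}\abs G^{h},\]
using $\abs S\ge y\abs G$. Since $x\le y\le\tfrac14$ and $a\ge 2$, we have $x^{2a+h-1}\le y^{2a+h-1}=y^{a+h-2}\cdot y^{a+1}<\tfrac14 y^{a+h-2}$, so $\tfrac14 xy^{a+h-2}>x\cdot x^{2a+h-1}=x^{2a+h}$, giving $\ind_H(G)>x^{2a+h}\abs G^{h}$ and contradicting the failure of the first outcome. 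Therefore one of the three outcomes always holds.

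The only real difficulty is the accounting: lining up the powers of $x$ and $y$ through the averaging step, the outlier deletion, and the final inequality, while staying honest about the rounding in $\lceil y\abs G\rceil$ and $\lfloor y^a\abs G\rfloor$. The threshold $\tfrac12 y^{a-2}\abs S$ defining $T$ is chosen precisely so that each $\abs{A(X)}$ with $X\in T$ is large enough for the third outcome, while $\sum_{X\in T}\abs{A(X)}$ is still large enough for the final copy-count.
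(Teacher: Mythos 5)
Your proof is correct and follows essentially the same route as the paper's: take the initial segment $S$ of length $\lceil y|G|\rceil$, count copies of $H\setminus\{u,v\}$ in $G[S]$ with many extensions to $H'$, and use the failure of the third outcome to turn edges from each extension-set $A(X)$ to the non-neighbours of $X$ outside $S$ into many copies of $H$ (exploiting that $v$ is a last vertex of degree one). The only differences are bookkeeping choices (your threshold $\tfrac12 y^{a-2}|S|$ for $T$ and summing $\sum_{X\in T}|A(X)|$ rather than truncating each $A(X)$ and using $|T|$ times a per-copy bound), and these do not change the argument.
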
	
	\begin{proof}
		We assume that the last two outcomes do not hold. If $y\abs G\le 1$ then the second outcome holds with $S$ being a singleton (since $\abs{H'}=h-1\ge 2$); and if $y^a\abs G\le 1\le y\abs G$ then the third outcome holds by letting $A=\{w\}$ for $w\in V(G)$ and $B$ be the set of nonneighbours of $w$ in $G$ (then $\abs B\ge \abs G-y\abs G-1\ge (1-hy)\abs G$).
		Thus $y^a\abs G\ge1$.
		From the symmetry, we may assume that $v$ is the last vertex of $H$.
		Let $u$ be the neighbour of $v$ in $H$, and let $J:=H\setminus \{u,v\}$.
		Let $S$ be the set of the first $\lceil y|G|\rceil$ vertices of $G$; that is, $S$ is the subset of $V(G)$
		with $|S|=\lceil y|G|\rceil$ such that $p\le_G q$ for all $p\in S$ and $q\in V(G)\setminus S$.
		For every copy $\varphi$ of $J$ in $G[S]$,
		let $I_{\varphi}$ be the set of copies $\varphi'$ of $H'$ in $G[S]$ with $\varphi'\vert_{V(J)}=\varphi$.
		Let $T$ be the set of copies $\varphi$ of $J$ in $G[S]$ with $\abs{I_{\varphi}}\ge y^a\abs G$.
		Since $y^a\abs G\le y^{a-1}\abs S$, since there are at most $\abs S^{h-2}$ copies of $J$ in $G[S]$, and since the second outcome of the lemma does not hold, we have (note that $y\in(0,\frac12)$)
		\[\sum_{\varphi\in T}\abs{I_{\varphi}}
		\ge \ind_{H'}(G[S])-\abs S^{h-2}\cdot y^a\abs G
		> y^{a-2}\abs S^{h-1}-y^{a-1}\abs S^{h-1}
		\ge y^{a-1}\abs S^{h-1},\]
		and so $\abs T> y^{a-1}\abs S^{h-2}$, since $\abs{I_{\varphi}}\le \abs S$ for all $\varphi\in T$.
		
		\begin{claim}
			\label{claim:sparse1}
			For every $\varphi\in T$ there are at least $x^{a+3}\abs G^2$ copies $\varphi''$ of $H$ in $G$ with $\varphi''\vert_{V(J)}=\varphi$.
		\end{claim}
		\begin{subproof}
			Let $P:=\varphi(V(J))$;
			then $\abs P= h-2$.
			Let $A':=\{\varphi'(v):\varphi'\in I_{\varphi}\}$;
			then $A'\subseteq S$ and $\abs{A'}=\abs{I_{\varphi}}\ge y^a\abs G$.
			Let $A\subseteq A'$ with $\abs A=\ceil{y^a\abs G}$.
			Now $\abs{V(G)\setminus S}= |G|-\lceil y|G|\rceil\ge |G|(1-y)-1$; and at most $y(h-2)|G|$ 
			vertices in $V(G)\setminus S$ have a neighbour in $P$.
			Let $B$ be the set of vertices in $V(G)\setminus S$ with no neighbours in $P$;~then 
			\[\abs B\ge (1-(h-1)y)\abs G-1.\]
			Since the third outcome does not hold, there are at most $(1-hy)\abs G$ vertices in $B$ that have fewer than $x\abs A$ neighbours in $A$.
			Thus the number of vertices in $B$ with at least $x\abs A$ neighbours in $A$ is at least
			\[\abs B-(1-hy)\abs G\ge y\abs G-1\ge 2y^2\abs G-1\ge y^2\abs G\]
			(note that $y\abs G\ge 2y^2\abs G\ge 2y^a\abs G\ge 2$).
			Consequently there are at least $xy^2\abs A\cdot\abs G\ge x^{a+3}\abs G^2$ edges between $A,B$.
			Since the endpoints of each such edge together with $P$ form a copy $\varphi'$ of $H$ in $G$ with $\varphi'\vert_{V(J)}=\varphi$,
			there are at least $x^{a+3}\abs G^2$ copies $\varphi'$ of $H$ in $G$ with $\varphi'\vert_{V(J)}=\varphi$, as claimed.
		\end{subproof}
		
		Therefore, \cref{claim:sparse1} implies that
		\[\ind_{H}(G)
		\ge \abs T\cdot x^{a+3}\abs G^2
		> y^{a-1}\abs S^{h-2} \cdot x^{a+3}\abs G^2
		\ge y^{a+h-3}x^{a+3}|G|^h
		\ge x^{2a+h}\abs G^{h}\]
		which is the first outcome.
		This proves \cref{lem:sparse1}.
	\end{proof}

	Now we turn sparse pairs into sparse, long, wide blockades.

	\begin{lemma}
		\label{lem:sparse2}
		Let $H$ be an ordered graph with $h:=\abs H\ge3$, and let $v\in V(H)$ be an end vertex of $H$, and have degree one.
		Let $H':=H\setminus \{v\}$.
		Let $x,y>0$ with $x\le y\le 4^{-h}$,
		and let $G$ be an ordered graph with maximum degree at most $y^2\abs G$.
		Then for every $a\ge2$, one of the following holds:
		\begin{itemize}
			\item $\ind_H(G)>x^{2a+2h}\abs G^{h}$;
			
			\item there exists $S\subset V(G)$ with $\abs S\ge y^2\abs G$ such that $\ind_{H'}(G[S])\le y^{a-2}\abs S^{h-1}$; or
			
			\item there is an $x$-sparse $(y^{-1},{y^{a+1}\abs G})$-blockade in $G$.
		\end{itemize}
	\end{lemma}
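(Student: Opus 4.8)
The plan is to derive \cref{lem:sparse2} by iterating \cref{lem:sparse1}. Assuming the first two outcomes of \cref{lem:sparse2} fail, we peel off sparse pairs $(A_i,B_i)$ one at a time, each extracted from inside the previous residue set $B_{i-1}$, and concatenate the sets $A_i$ into a long $x$-sparse blockade, with the residue $B_i$ always retaining almost all of $B_{i-1}$. Set $B_0:=V(G)$ and $k:=\lceil 1/y\rceil$; if $\lfloor y^{a+1}\abs G\rfloor=0$ we may take $k$ empty sets to get the third outcome trivially, so assume $y^{a+1}\abs G\ge1$. The core claim, proved by induction on $i\in\{1,\dots,k\}$, is that there are pairwise disjoint $A_1,\dots,A_i,B_i\subseteq V(G)$ with $A_j,B_j\subseteq B_{j-1}$ and $B_j$ $x$-sparse to $A_j$ for each $j\le i$, with $\abs{A_j}\ge\lfloor y^a\abs{B_{j-1}}\rfloor$, and with $\abs{B_i}\ge(1-hy)^i\abs G$.

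For the inductive step I would apply \cref{lem:sparse1} to the ordered graph $G[B_{i-1}]$, with the same $x$, the same $y$, and the same integer $a\ge2$. Its hypotheses hold since $x\le y\le 4^{-h}\le 1/(2h)$ and, crucially, $G[B_{i-1}]$ has maximum degree at most $y^2\abs G\le y\abs{B_{i-1}}$, the last inequality using the size bound $\abs{B_{i-1}}\ge y\abs G$ discussed below. The three outcomes of \cref{lem:sparse1} then translate directly. Its first outcome gives $\ind_H(G)\ge\ind_H(G[B_{i-1}])>x^{2a+h}\abs{B_{i-1}}^h\ge x^{2a+h}(y\abs G)^h\ge x^{2a+2h}\abs G^h$ (using $y\ge x$), which is the first outcome of \cref{lem:sparse2}, contrary to assumption. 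Its second outcome gives $S\subseteq B_{i-1}$ with $\abs S\ge y\abs{B_{i-1}}\ge y^2\abs G$ and $\ind_{H'}(G[S])\le y^{a-2}\abs S^{h-1}$, which is the second outcome of \cref{lem:sparse2}, again contrary to assumption. Hence its third outcome must hold, supplying $A_i,B_i\subseteq B_{i-1}$ with $\abs{A_i}\ge\lfloor y^a\abs{B_{i-1}}\rfloor$, $\abs{B_i}\ge(1-hy)\abs{B_{i-1}}$, and $B_i$ $x$-sparse to $A_i$, which completes the step. After all $k$ steps: because $B_{j-1}\subseteq B_i$ for $i<j$ and being $x$-sparse to a fixed set is inherited by subsets, each $A_j$ is $x$-sparse to each $A_i$ with $i<j$, so $(A_1,\dots,A_k)$ is $x$-sparse; its length is $k\ge 1/y$, and its width is $\min_i\abs{A_i}\ge\min_i\lfloor y^a\abs{B_{i-1}}\rfloor\ge\lfloor y^{a+1}\abs G\rfloor$ by the size bound, which is exactly the third outcome of \cref{lem:sparse2}.

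So the whole argument reduces to the size bound $\abs{B_{i-1}}\ge y\abs G$ for $i\le k$; since $\abs{B_{i-1}}\ge(1-hy)^{i-1}\abs G$ and $1-hy<1$ while $i-1\le 1/y$, it suffices to prove $(1-hy)^{1/y}\ge y$. This is precisely where the hypothesis $y\le 4^{-h}$ is needed, rather than the weaker $y\le 1/(2h)$ permitted by \cref{lem:sparse1}: with $y$ near $1/(2h)$ one has $(1-hy)^{1/y}\approx 4^{-h}$, which can lie far below $1/(2h)$. I would prove the inequality by writing $(1-hy)^{1/y}=\exp\bigl(-h\,g(hy)\bigr)$, where $g(t):=-\ln(1-t)/t=\sum_{n\ge1}t^{n-1}/n$ is increasing on $(0,1)$; since $hy\le h4^{-h}\le\tfrac18$ (as $h=\abs H\ge2$), we get $g(hy)\le g(\tfrac18)=8\ln\tfrac87<\ln4$, hence $(1-hy)^{1/y}>e^{-h\ln4}=4^{-h}\ge y$. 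This elementary estimate is the only real obstacle; the rest is bookkeeping: translating outcomes and propagating sparsity to subsets.
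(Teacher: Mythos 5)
Your proposal is correct and follows essentially the same route as the paper: both iterate \cref{lem:sparse1} inside the shrinking residue sets, translate its first two outcomes into the first two outcomes of \cref{lem:sparse2}, and assemble the $A$-sets into the $x$-sparse blockade, with the hypothesis $y\le 4^{-h}$ used exactly to guarantee $(1-hy)^{1/y}\ge 4^{-h}\ge y$ so the residue stays of size at least $y\abs G$ (the paper phrases the iteration as a maximal-blockade contradiction and uses the inequality $1-t\ge 4^{-t}$ on $[0,\tfrac12]$, but these are only cosmetic differences).
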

	\begin{proof}
		Suppose that none of the outcomes holds.
		If $y^2\abs G\le 1$ then the second outcome holds with $\abs S=1$ (since $\abs{H'}=h-1\ge2$);
		and if $y^{a+1}\abs G\le 1\le y^2\abs G$ then the third outcome holds since $G$ has a stable set of size at least $\frac{\abs G}{1+y^2\abs G}\ge \frac12y^{-2}\ge y^{-1}$.
		Hence $y^{a+1}\abs G\ge1$.
		Let $k\ge0$ be maximal such that there is an $x$-sparse blockade $(B_0,B_1,\ldots,B_{k})$ in $G$ 
		such that $\abs{B_{i-1}}\ge y^{a+1}\abs G$ for all $i\in[k]$, and $\abs{B_{ k}}\ge (1-hy)^{k}\abs G$. (For $k=0$ one can take $B_0:=V(G)$.)
		Since the third outcome does not hold, we have $ k<y^{-1}$, which implies
		(note that $1-t\ge 4^{-t}$ for all $t\in[0,\frac12]$ and $hy\le h\cdot 4^{-h}\le\frac12$) 
		\[\abs{B_{ k}}\ge(1-hy)^{ k}\abs G
		\ge 4^{-hy k}\abs G
		>4^{-h}\abs G\ge y\abs G\ge x\abs G.\]
		It follows that $G[B_{ k}]$ has maximum degree at most $y^2\abs G\le y\abs{B_{ k}}$.
		Since the first two outcomes do not hold, we have
		\begin{itemize}
			\item $\ind_{H}(G[B_{ k}])
			\le\ind_{H}(G)
			\le x^{2a+2h}\abs G^{h}
			\le x^{2a+h}\abs {B_{ k}}^{h}$; and
			
			\item $\ind_{H'}(G[B_{ k}])> y^{a-2}\abs {B_{ k}}^{h-1}$.
		\end{itemize}
		By \cref{lem:sparse1},
		there are disjoint $A,B\subseteq B_{ k}$ with
		\[\abs A\ge {y^a\abs {B_{ k}}}\ge {y^{a+1}\abs G}
		\quad
		\text{and}
		\quad
		\abs B\ge(1-hy)\abs{B_{ k}}
		\ge (1-hy)^{ k+1}\abs G\]
		such that $B$ is $x$-sparse to $A$.
		Therefore $(B_0,\ldots,B_{ k-1},A,B)$ is an $x$-sparse blockade, and this contradicts the maximality of $ k$.
		This proves \cref{lem:sparse2}.
	\end{proof}
	
	
	\section{Iterative sparsification}
	
	To finish the proof of \cref{thm:mainorderedpair}, we need to write out the argument sketched in \cref{sec:sketch}.
	\begin{theorem}\label{axioms}
		Let $\mac F$ be a finite set of ordered graphs, and let $H,\overline{J}\in \mathcal{F}$.
		Let $v$ be an end vertex of $H$ with degree one, and let $w$ be an end vertex of $J$ with degree one.
		Let $H'=H\setminus \{v\}$ and $J'=J\setminus \{w\}$. If $\{H'\}\cup (\mac F\setminus \{H\})$ and 
		$\{\overline{J'}\}\cup (\mac F\setminus \{\overline{J}\})$ are both viral then 
		$\mac F$ is viral.
	\end{theorem}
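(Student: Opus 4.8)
The plan is to show that $\mac F$ is \emph{divisive} and then invoke \cref{thm:divisive} to conclude it is viral. So I fix a large constant $d$ (to be determined), take $x\in(0,c)$ for a suitable small $c$, and let $G$ be an ordered graph with $\mu_{\mac F}(x^d,G)\le 1$; assuming no $x$-sparse or $(1-x)$-dense $(k,\lfloor|G|/k^d\rfloor)$-blockade exists in $G$ with $k\in[2,1/x]$, I must derive a contradiction by producing a large $x$-restricted set (or, via \cref{lem:viraleqnt}, just a sparse/dense set of decent size), which contradicts the failure of the blockade condition once $d$ is chosen large enough relative to the viral exponent $d_0$ of $\{H'\}\cup(\mac F\setminus\{H\})$ and $\{\overline{J'}\}\cup(\mac F\setminus\{\overline J\})$. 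Actually, it is cleaner to prove directly that $\mac F$ is weakly viral: I will build the nested sequence $V(G)=S_0\supseteq S_1\supseteq\cdots$ described in the sketch.

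The first step is to produce $S_1$: since $\mu_{\mac F}(x^d,G)\le 1$ and $\overline J\in\mac F$ (or $H\in\mac F$), applying \cref{thm:orderedniki} (the ordered Nikiforov theorem) with $\vep=2^{-4h}$ gives a $2^{-4h}$-restricted subset $S_1$ of linear size; passing to the complement if needed, I may assume $G[S_1]$ has small maximum degree, so from here on I am in the ``sparse'' regime and will use that $\{H',\overline J\}\subseteq\{H'\}\cup(\mac F\setminus\{H\})$ is viral. The general step $S_i\to S_{i+1}$ (for $i\ge 1$) is where \cref{lem:sparse2} does the work: setting $y=2^{-2hd^{i-1}}$ so that $S_i$ is $y^2$-restricted, I apply \cref{lem:sparse2} to $G[S_i]$ with an appropriate $a$. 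If the first outcome holds, $G$ contains more than $x^{2a+2h}|S_i|^h\ge(x^d|G|)^h$ copies of $H$ — contradicting $\mu_{\mac F}(x^d,G)\le 1$ once $d$ dominates the exponents and $|S_i|$ is polynomially large. If the third outcome holds, I get an $x$-sparse $(y^{-1},\lfloor y^{a+1}|G|\rfloor)$-blockade inside $G$, which (since $y^{-1}\ge 2$, $y^{-1}\le 1/x$, and $y^{a+1}|G|\ge|G|/(y^{-1})^d$ for suitable $a,d$) is a forbidden blockade — contradiction. So the second outcome must hold: there is $S\subseteq S_i$ with $|S|\ge y^2|S_i|$ and $\ind_{H'}(G[S])\le y^{a-2}|S|^{h-1}$. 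Since $G$ — and hence $G[S]$ — also has few copies of $\overline J$ (and of every other member of $\mac F$), I get $\mu_{\{H',\overline J\}\cup(\mac F\setminus\{H,\overline J\})}(\text{small power},G[S])\le 1$; now apply the viral exponent $d_0$ of $\{H'\}\cup(\mac F\setminus\{H\})$ to $G[S]$ to extract a $y'$-restricted subset $S_{i+1}\subseteq S$ with $|S_{i+1}|\ge 2^{-6hd^i}|S_i|$, maintaining the invariant that $G_{i+1}$ is $2^{-4hd^i}$-restricted.

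Running this for $i=1,2,\ldots,m$ with $m$ of order $\log(1/x)/\log d$ or so: either some step produces one of the ``bad'' outcomes (a contradiction), or after $m$ steps I have $S_m$ with $|S_m|\ge 2^{-6hd^{m}}|G|$ on which $G$ is $2^{-4hd^{m-1}}$-restricted; choosing $m$ so that $2^{-4hd^{m-1}}\le\vep$ while $2^{-6hd^m}$ is still at least $\vep^{O(d^{\,})}$, this $S_m$ is the $\vep$-restricted set of size $\ge\vep^d|G|$ needed to witness that $\mac F$ is viral with exponent $d$ — so $\mac F$ is viral, contradicting nothing this time, which is the point: we directly establish virality. (The role of the contradiction framing above is just that along the way we must rule out the bad outcomes, and that is exactly where the blockade-non-existence and the copy-count bound $\mu_{\mac F}(x^d,G)\le1$ are spent.) Throughout one tracks that the number of copies of any fixed $F\in\mac F$ in an induced subgraph $G[S]$ with $|S|\ge\rho|G|$ is at most $\rho^{-|F|}$ times the bound in $G$, so the hypothesis $\mu_{\mac F}(x^d,G)\le1$ degrades controllably; this is the routine bookkeeping that makes the polynomial trade-off between size and sparsity survive each iteration.

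The main obstacle is the accounting: making the exponents line up so that (i) the ``many copies of $H$'' outcome of \cref{lem:sparse2} genuinely contradicts $\mu_{\mac F}(x^d,G)\le 1$, (ii) the ``$x$-sparse blockade'' outcome genuinely lands in the forbidden range $k\in[2,1/x]$, width $\ge\lfloor|G|/k^d\rfloor$, and (iii) the second outcome feeds into the inductive virality with a loss that compounds to only $2^{-O(hd^{i})}$ per step, so that after $\Theta(\log_d(1/\vep))$ steps the surviving set is still of size $\ge\vep^d|G|$ with the prescribed sparsity $\le\vep$. Choosing $d$ large relative to $h$ and $d_0$, and choosing the auxiliary parameter $a$ in \cref{lem:sparse2} as roughly a constant times $d^{i}$, makes all three hold simultaneously, but verifying the chain of inequalities — especially that $|S_i|$ stays polynomially large in $|G|$ so the counting bounds remain meaningful — is the delicate part. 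Once that is set up, the proof is just \cref{thm:orderedniki} to start, \cref{lem:sparse2} to iterate, and the inductive virality hypotheses to refine each $S_i$, with \cref{lem:viraleqnt} smoothing over the average-degree versus maximum-degree distinction.
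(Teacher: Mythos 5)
Your overall strategy is the same as the paper's: prove divisiveness/weak virality by iterative sparsification, using \cref{thm:orderedniki} to start, \cref{lem:sparse2} at each step, the inductive viral hypotheses to refine, and \cref{thm:divisive} together with \cref{lem:viraleqnt} to finish. But there is a genuine gap in the step where you say that, after complementing once, you are ``from here on in the sparse regime'' and will only use the virality of $\{H'\}\cup(\mac F\setminus\{H\})$. The $\vep$-restricted set that the inductive viral hypothesis returns at step $i$ is only guaranteed to have small maximum degree in one of $G[S_{i+1}]$, $\overline G[S_{i+1}]$, and which one can flip from iteration to iteration: since $|S_{i+1}|$ may be as small as $y^{2d^2+2}|S_i|$, which is far below the degree bound $y^2|S_i|$ of $G[S_i]$, a very sparse graph (e.g.\ a disjoint union of cliques of size about $y^2|S_i|$) can perfectly well hand you a \emph{dense} restricted subset. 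At such a step, \cref{lem:sparse2} applied with the leaf $v$ of $H$ is unavailable (it requires $G[S_{i+1}]$ itself to have small maximum degree); you must instead run the argument in the complement with the leaf $w$ of $J$, which consumes the virality of $\{\overline{J'}\}\cup(\mac F\setminus\{\overline J\})$. This is exactly why both hypotheses appear in the statement, and it is how the paper proceeds: its key intermediate claim (\cref{claim:res}) is a per-step dichotomy concluding only that $\min\bigl(\mu_{H'}(y^{2d^2},G[S]),\mu_{\overline{J'}}(y^{2d^2},G[S])\bigr)\le1$, after which one invokes whichever of the two viral families applies at that step. (The paper's own sketch in Section 2 uses the same loose ``complement once'' phrasing, but its formal proof does not rely on it.) As written, your iteration breaks at the first step where the regime flips; the fix is the per-step case distinction, not a one-time complementation.

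Two smaller points on the accounting. The auxiliary parameter $a$ in \cref{lem:sparse2} must be a fixed constant (the paper takes $a=2d^2h$, so that $y^{a-2}\le(y^{2d^2})^{h-1}$ feeds the inductive viral exponent, and $b=a+6d+1$ absorbs both the blockade width and the copy-count comparison); taking $a$ of order $d^{\,i}$, as you suggest, makes the blockade width $y^{a+1}|S_i|$ decay like $c^{\Theta(d^{2i-1})}|G|$, which no fixed divisive exponent $b$ can dominate, so the third outcome would no longer contradict the assumed absence of $(k,\floor{|G|/k^b})$-blockades. Also, the endgame should be phrased as establishing divisiveness: $S_m$ is $x^2$-restricted of size $x^{O(d^2)}|G|$, and chopping it into $\ceil{x^{-1/2}}$ equal pieces produces exactly the forbidden blockade, giving the contradiction; producing $S_m$ does not by itself witness virality with a uniform exponent for arbitrary $\vep$, which is why the passage through divisiveness and \cref{thm:divisive} is needed.
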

	\begin{proof}
		We may assume that $\abs H,\abs J\ge 3$. Define $h:=\max(\abs H,\abs J,4)$, and $c:=4^{-h}$. By \cref{thm:orderedniki}, there exists $c'>0$ such 
		that for every ordered graph $G$ with $\mu_{\mac F}(c',G)\le 1$, there is a $c^2$-restricted $S\subseteq V(G)$ with $\abs S\ge c'\abs G$.
		Since $\mac F_1:=\{H'\}\cup (\mac F\setminus \{H\})$ and $\mac F_2:=\{\overline{J'}\}\cup (\mac F\setminus \{\overline{J}\})$ are viral,
		there exists $d>0$ that is a viral exponent for them both; and
		we may increase $d$ so that $d\ge\max(4,\log_c(c'))$. 
		Let $a:=2d^2h$ and $b:=a+6d+1$.
		We need the following claim.
		\begin{claim}
			\label{claim:res}
			Let $x,y>0$ with $x\le y\le c$, and let $G$ be a $y^2$-restricted ordered graph.
			Then either
			\begin{itemize}
				\item $\mu_{\mac F}(x^{b-4d},G)>1$; or
				
				\item $\min(\mu_{H'}(y^{2d^2},G[S]),\mu_{\overline {J'}}(y^{2d^2},G[S]))\le 1$ for some $S\subseteq V(G)$ with $\abs S\ge y^2\abs G$; or
				
				\item there is an $x$-sparse or $(1-x)$-dense $(y^{-1},{y^{a+1}\abs G})$-blockade in $G$.
			\end{itemize}
		\end{claim}
		\begin{subproof}
			If $G$ has maximum degree at most $y^2\abs G$, then \cref{lem:sparse2} implies that either:
			\begin{itemize}
				\item $\ind_H(G)>x^{2a+2\abs H}\abs G^{\abs H}\ge (x^{b-4d}\abs G)^{\abs H}$, where the second 
				inequality is by the choice of $b$ (and so $\mu_{\mac F}(x^{b-4d},G)>1$); or
				
				\item $\ind_{H'}(G[S])\le y^{a-2}\abs S^{\abs{H'}}\le (y^{2d^2}\abs S)^{\abs {H'}}$ for some 
				$S\subseteq V(G)$ with $\abs S\ge y^2\abs G$, where the second inequality is because $a-2=2d^2h-2\ge 2d^2(h-1)$ by the choice of $a$ (and so $\mu_{H'}(y^{2d^2},G[S])\le 1$); or
				
				\item there is an $x$-sparse $(y^{-1},{y^{a+1}\abs G})$-blockade in $G$.
			\end{itemize}
			
			If $\overline G$ has maximum degree at most $y^2\abs G$, then similarly, also by \cref{lem:sparse2}, either:
			\begin{itemize}
				\item $\ind_{\overline J}(G)=\ind_J(\overline G)>x^{2a+2\abs J}\abs G^{\abs J}\ge (x^{b-4d}\abs G)^{\abs J}$; or
				
				\item $\ind_{\overline{J'}}(G[S])=\ind_{J'}(\overline G[S])\le y^{a-2}\abs S^{\abs {J'}}\le (y^{2d^2}\abs S)^{\abs{J'}}$ for some $S\subseteq V(G)$ with $\abs S\ge y^2\abs G$; or
				
				\item there is a $(1-x)$-dense $(y^{-1},{y^{a+1}\abs G})$-blockade in $G$.
			\end{itemize}
			
			This proves \cref{claim:res}.
		\end{subproof}
		
		We claim that
		$(b,c)$ is a pair of divisive sidekicks for $\mac F$; and hence $\mac F$ is divisive, and consequently viral by \cref{thm:divisive}.
		Thus, let $x\in(0,c)$, and let $G$ be a graph with $\mu_{\mac F}(x^b,G)\le1$.
		Suppose for a contradiction that there is no $x$-sparse or $(1-x)$-dense $(k,\floor{\abs G/k^b})$-blockade in $G$ with 
		$k\in[2,1/x]$; then $\abs G\ge x^{-b/2}$, because otherwise the blockade with $\lfloor 1/x\rfloor\ge x^{-1/2}$ empty blocks contradicts our supposition.
		Let $m\ge2$ be the least integer with $c^{d^{m-1}}\le x$;
		then $c^{d^{m-2}}\ge x$.
		\begin{claim}
			\label{claim:sequence}
			There is a nested sequence $V(G)=S_0\supseteq S_1\supseteq\cdots\supseteq S_m$
			such that
			$\abs{S_i}\ge c^{3d^{i}}\abs{S_{i-1}}$ and $S_i$ is $c^{2d^{i-1}}$-restricted in $G$
			for all $i\in[m]$.
		\end{claim}
		\begin{subproof}		
			Since $\mu_{\mac F}(c',G)\le\mu_{\mac F}(c^d,G)\le\mu_{\mac F}(x^b,G)\le1$ by the choice of $b,d,x$,
			there exists a $c^2$-restricted $S_1\subseteq V(G)$ with $\abs{S_1}\ge c'\abs G\ge c^d\abs G$.
			This proves the base case.
			
			Now, for $i\in[m]$ with $i<m$,
			assuming that $S_0,S_1,\ldots,S_i$ have been constructed, we shall construct $S_{i+1}$.
			Let $y:=c^{d^{i-1}}\ge c^{d^{m-2}}\ge x$.
			Since $d^{i-1}+d^{i-2}+\cdots+1=\frac{d^i-1}{d-1}<\frac13d^i$ (as $d\ge4$), and consequently
			$3d^{i}+3d^{i-1}+\cdots+3d+3<4d^i$, we have
			\[\begin{aligned}
				\abs {S_i}\ge c^{3d^i}\abs{S_{i-1}}
				\ge c^{3d^i+3d^{i-1}}\abs{S_{i-2}}
				\ge\cdots
				&\ge c^{3d^i+3d^{i-1}+\cdots+3d}\abs {S_0}
				\ge c^{4d^{i}}\abs G 
				= y^{4d}\abs G
				\ge x^{4d}\abs G.
			\end{aligned}\]
			It follows that
			\[\mu_{\mac F}(x^{b-4d},G[S_i])
			\le \mu_{\mac F}(x^b,G)\le 1.\]
			Thus, since $S_i$ is $y^2$-restricted in $G$ and $0<x\le y\le c$,
			\cref{claim:res} implies that either
			\begin{itemize}
				\item $\min(\mu_{H'}(y^{2d^2},G[S]),\mu_{\overline {J'}}(y^{2d^2},G[S]))\le 1$ for some $S\subseteq S_i$ with $\abs S\ge y^2\abs {S_i}$; or
				
				\item $G[S_i]$ contains an $x$-sparse or $(1-x)$-dense $(y^{-1},{y^{a+1}\abs {S_i}})$-blockade.
			\end{itemize}
			If the second bullet holds, then since $y^{a+1}\abs{S_i}\ge y^{a+6d+1}\abs G\ge y^b\abs G$ by the choice of $b$,
			$G[S_i]$ (and thus $G$) contains an $x$-sparse or $(1-x)$-dense $(k,\floor{\abs G/k^b})$-blockade where $k=1/y\in[2,1/x]$,
			contradicting our supposition.
			Thus the first bullet holds.
			Let $G':=G[S]$.
			Because $a=2d^2h$ and $h=4$, we have $b=a+6d+1\ge 2d^2+5d$.
			Thus, since $\abs S\ge y^2\abs{S_i}\ge y^{4d+2}\abs G\ge y^{5d}\abs G$, we obtain
			\[\mu_H(y^{2d^2},G')
			\le \mu_H(y^{2d^2+5d},G)
			\le \mu_H(x^{b},G)
			\le 1,\]
			and similarly
			$\mu_{\overline J}(y^{2d^2},G')\le 1$.
			Therefore, recalling that $\mac F_1=\{H',\overline J\}$ and $\mac F_2=\{H,\overline{J'}\}$, 
			we deduce that $\mu_{\mac F_i}(y^{2d^2},G')\le 1$ for some $i\in\{1,2\}$.
			Hence, the choice of $d$ implies that
			$G'$ (and so $G$) contains a $y^{2d}$-restricted $S_{i+1}\subseteq S$ with 
			$\abs {S_{i+1}}\ge y^{2d^2}\abs S\ge y^{2d^2+2}\abs{S_i}
			\ge c^{3d^{i+1}}\abs{S_i}$.
			Since $y^{2d}=c^{2d^i}$, this completes the induction step and proves \cref{claim:sequence}.
		\end{subproof}
		Now, we have $x^{2d}\le c^{2d^{m-1}}\le x^2$, which implies that $S_m$ is $x^2$-restricted in $G$. Furthermore, because $b=a+6d+1=2d^2h+6d+1\ge 8d^2+1$, we see that
		\[\begin{aligned}
			\abs{S_m}\ge c^{3d^m}\abs{S_{m-1}}
			\ge\ldots &\ge c^{3d^m+3d^{m-1}+\cdots+3d}\abs{S_0}\\
			&\ge c^{4d^m}\abs{G}
			\ge x^{4d^2}\abs G
			\ge x^{4d^2-b/2}\ge x^{-1}.
		\end{aligned}\]
		Let $k:=\ceil{x^{-1/2}}$;
		then $k\ge x^{-1/2}\ge2$.
		Because $k\floor{2x\abs{S_m}}\le 2x^{-1/2}\cdot 2x\abs{S_m}=4x^{1/2}\abs{S_m}
		\le\abs{S_m}$,
		there are disjoint subsets $B_1,\ldots,B_k$ of $S_m$ with $\abs{B_i}=\floor{2x\abs{S_m}}$ for all $i\in[k]$.
		Since \[\floor{2x\abs{S_m}}\ge x\abs{S_m}\ge x^{4d^2+1}\abs G\ge \abs G/k^{8d^2+2}\ge \abs G/k^b\]
		by the choice of $b$,
		and since $S_m$ is $x^2$-restricted in $G$,
		$(B_1,\ldots,B_k)$ is an $x$-sparse or $(1-x)$-dense $(k,\abs G/k^b)$-blockade in $G$, a contradiction.
		
		This proves our claim that $(b,c)$ is a pair of divisive sidekicks for $\mac F$.
		Consequently $\mac F$ is divisive, and therefore viral by \cref{thm:divisive}.
		This proves \cref{axioms}. 
	\end{proof}
	
	Finally, we have:
	
	\begin{proof}
		[Proof of \cref{thm:mainorderedpair}]
		We proceed by induction on $\abs H+\abs J$.
		If $\min(\abs H,\abs J)\le 2$ then we are done by \cref{lem:increasing}, since every ordered graph on at most two vertices is viral.
		Let $\abs H,\abs J\ge 3$; we assume that the theorem is true for every pair $\{H',\overline{J'}\}$ with $\abs{H'}+\abs{J'}<\abs H+\abs J$, and
		we shall prove it for $\mac F:=\{H,\overline J\}$.
		
		If one of $H,J$ is not prime, then $\{H,\overline J\}$ is viral by \cref{lem:sub} and the induction hypothesis.
		Thus we may assume they are both prime; and so there is a vertex $v$ of $H$ with degree one, and $v$ is either the first or 
		last vertex of $H$. Choose $u\in V(J)$ similarly, and let
		$H':=H\setminus \{v\}$ and $J':=J\setminus \{u\}$.
		From the inductive hypothesis, 
		$\mac F_1:=\{H',\overline J\}$ and $\mac F_2:=\{H,\overline{J'}\}$ are~viral; and so \cref{axioms} implies that
		$\{H,\overline{J}\}$ is viral. This proves 
		\cref{thm:mainorderedpair}.
	\end{proof}

	
	\section{Corollaries}\label{sec:cor}

	There are several corollaries of these results. First, let us say a {\em monotone path} is an ordered graph
	$H$ such that $H\nat$ is a path with vertex set  $\{v_1,\ldots, v_k\}$, where $v_i,v_{i+1}$ are adjacent for $1\le i<k$, and $v_i\le_H v_j$ if $i<j$.
	As we mentioned earlier, Pach and Tomon~\cite{MR4273057} proved:
	\begin{theorem}\label{pachtomon}
		If $H_1,H_2$ are both monotone paths, then there exists $c>0$ such that if $G$ is an ordered graph that is both $H_1$-free and $\overline{H_2}$-free then $G$ has a clique or stable set of size at least $|G|^c$.
	\end{theorem}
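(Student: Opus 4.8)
The plan is to obtain \cref{pachtomon} as a direct instance of \cref{thm:buildord} (which is itself a corollary of the viral statement \cref{thm:mainorderedpair}), so that the only thing left to check is a membership in the class $\mac K$. First I would unwind the hypotheses. \cref{thm:buildord} says that if $A$ and $\overline B$ both lie in $\mac K$, then every ordered graph $G$ that is $A$-free and $B$-free has a clique or stable set of size at least $|G|^{c}$. Applying this with $A:=H_1$ and $B:=\overline{H_2}$, the hypothesis $\overline B\in\mac K$ becomes $H_2\in\mac K$, and the conclusion becomes exactly the assertion that every $H_1$-free, $\overline{H_2}$-free ordered graph has a polynomial-sized clique or stable set. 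Hence \cref{pachtomon} reduces to the single statement that every monotone path lies in $\mac K$.

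Second, I would verify that monotone paths belong to $\mac K$, which is immediate from the constructive definition of $\mac K$. If $P$ is the monotone path on $v_1\le_P\cdots\le_P v_k$ with $v_i$ adjacent to $v_{i+1}$ for $1\le i<k$, then $P$ is built from the one-vertex ordered graph on $v_1$ by successively adjoining $v_2,v_3,\ldots,v_k$; at the step that adds $v_j$, the current ordered graph is the monotone path on $\{v_1,\ldots,v_{j-1}\}$, the new vertex $v_j$ has degree one (its only neighbour being $v_{j-1}$), and $v_j$ sits at the top end of the order, so this is precisely the operation ``add a vertex of degree at most one at one end of the linear order''. Thus $P\in\mac K$. (Alternatively one could verify the characterisation of $\mac K$ recalled in \cref{sec:sketch}: any induced ordered subgraph of $P$ is a disjoint union of monotone sub-paths, hence either disconnected — and then not prime, since a component of size at least $2$, or a pair of isolated vertices, forms a homogeneous set — or else a single monotone path, whose two degree-$\le 1$ ends are also its first and last vertices.) Combining the two steps, \cref{thm:buildord} then yields \cref{pachtomon}.

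I do not expect any genuine obstacle here: all of the analytic weight sits inside \cref{thm:mainorderedpair} and ultimately \cref{axioms}, and once that machinery is available \cref{pachtomon} — like \cref{thm:pairsEH,thm:buildtour} — falls out simply by choosing the right pair of forbidden ordered graphs and noting that the graph in question, a path with its natural ordering, is about the simplest possible member of $\mac K$. The only subtlety worth flagging is bookkeeping with complements: it is $H_1$ and $H_2$ themselves (not $\overline{H_1}$ or $\overline{H_2}$) that must lie in $\mac K$, which is exactly what holds. If one preferred to invoke \cref{thm:mainorderedpair} directly instead of \cref{thm:buildord}, the one extra ingredient would be the routine passage from ``viral'' to ``polynomial-sized clique or stable set'': an $\vep$-restricted set $S$ with $|S|\ge\vep^{d}|G|$ has, in $G$ or in $\overline G$, maximum degree at most $\vep|S|$, and hence a clique or stable set of size at least $|S|/(\vep|S|+1)$, so taking $\vep$ to be a suitable small fixed power of $|G|$ gives a clique or stable set of size at least $|G|^{c}$.
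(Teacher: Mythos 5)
Your proposal is correct and follows essentially the same route as the paper, which derives \cref{pachtomon} by observing that monotone paths lie in $\mac K$ (built by repeatedly adding a degree-one vertex at the top of the order) and then quoting \cref{thm:mainorderedpair} (equivalently \cref{thm:buildord}); your complement bookkeeping, taking the forbidden pair to be $H_1$ and $\overline{H_2}$ with $H_1,H_2\in\mac K$, matches the paper, and the viral-to-clique/stable-set step you sketch is the standard routine deduction the paper leaves implicit.
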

	Since monotone paths belong to $\mathcal{K}$, this theorem is a special case of \cref{thm:mainorderedpair}. Indeed, it follows that if $H_1,H_2$ are monotone paths then $\{H_1,\overline{H_2}\}$ is viral.
	
	Second, what happens if we insist that $H_1=\overline{H_2}$ in \cref{thm:mainorderedpair}? That gives us a class of
	ordered graphs with the \erh{} property, as follows.
	Let $\mathcal{L}$ be the class of ordered graphs $G$ minimal such that:
	\begin{itemize}
		\item the one-vertex ordered graph is in $\mathcal{L}$;
		\item if $H_1,H_2\in \mathcal{L}$ and $H$ is obtained from $H_2$ by substituting $H_1$ for one of its vertices, then $H\in \mathcal{L}$;
		\item if $H\in \mathcal{L}$,
		and the first vertex $a$ of $H$ is adjacent to only the last vertex of $H$, then we can add a new last vertex adjacent to 
		all vertices in $V(H)\setminus \{a\}$, and this enlarged ordered graph is also in $\mathcal{L}$;
		\item also three variants of the bullet above, exchanging ``first'' with ``last'', and/or exchanging ``adjacent'' with ``nonadjacent'', which we do not write out explicitly.
	\end{itemize}
	
	If $H\in \mac L$, then evidently $H,\overline H\in\mac K$ (in fact $H\in\mac L$ if and only if $H,\overline H\in\mac K$; 
	this is proved like \cref{lem:char} below). We deduce from \cref{thm:mainorderedpair} that:
	\begin{theorem}\label{thm:orderedsingle}
		All ordered graphs in $\mathcal{L}$ are viral.
	\end{theorem}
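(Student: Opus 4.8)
The plan is to derive \cref{thm:orderedsingle} as an immediate corollary of \cref{thm:mainorderedpair} by showing that membership in $\mac L$ forces both $H\in\mac K$ and $\overline H\in\mac K$, and then applying \cref{thm:mainorderedpair} with $J=\overline H$, so that $\{H,\overline{\overline H}\}=\{H,\overline H\}$ is viral (a singleton is viral iff the graph is viral). So the real content is the structural claim relating $\mac L$ to $\mac K$.

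First I would prove by induction on $|H|$ that if $H\in\mac L$ then $H,\overline H\in\mac K$. The base case is the one-vertex ordered graph, which lies in $\mac K$ trivially (it is not prime—or, if one prefers, has a vertex of degree zero at an end). For the inductive step there are two kinds of construction rules for $\mac L$. If $H$ is obtained from $H_2\in\mac L$ by substituting $H_1\in\mac L$ for a vertex, then by induction $H_1,\overline{H_1},H_2,\overline{H_2}\in\mac K$; since $\mac K$ is closed under vertex-substitution (it is built using a substitution rule), $H\in\mac K$, and since $\overline H$ is obtained from $\overline{H_2}$ by substituting $\overline{H_1}$, also $\overline H\in\mac K$. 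If instead $H$ is obtained from $H'\in\mac L$ by one of the four ``add an end vertex of degree one or $|H|-2$'' rules, then $H',\overline{H'}\in\mac K$ by induction. In the degree-one case we have literally added an end vertex of degree one, so $H$ is produced by the first $\mac K$-building operation and $H\in\mac K$; and $\overline H$ is obtained from $\overline{H'}$ by adding the same end vertex with degree $|H|-2$, i.e.\ an end vertex of degree one in the complement, so $\overline H\in\mac K$ as well. The remaining three variants are symmetric under reversing the order and/or complementing. This gives $H,\overline H\in\mac K$.

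Then \cref{thm:mainorderedpair}, applied with this $H$ and with $J:=\overline H$ (noting $\overline J=H\in\mac K$, so both $H,J\in\mac K$ as required), yields that $\{H,\overline J\}=\{H,H\}=\{H\}$ is viral; equivalently $H$ is viral. Since this holds for every $H\in\mac L$, we conclude that all ordered graphs in $\mac L$ are viral, proving \cref{thm:orderedsingle}.

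The only point requiring care—the ``main obstacle,'' such as it is—is matching up the four variant rules in the definition of $\mac L$ with the single end-vertex operation in the definition of $\mac K$ under the symmetries of order-reversal and complementation, and checking that the first-vertex adjacency condition in the $\mac L$ rule (``the first vertex $a$ is adjacent to only the last vertex'') is not actually needed for the $\mac K$ membership argument: adding an end vertex of degree one is permitted by $\mac K$ regardless. (That adjacency condition is what keeps the enlarged graph prime, which matters for producing genuinely new prime examples, but is irrelevant to virality.) This is exactly the kind of routine verification the paper defers with the phrase ``this is proved like \cref{lem:char} below,'' so in the write-up I would either cite that lemma or spell out the short induction above.
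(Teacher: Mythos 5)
Your overall route is the same as the paper's: establish that $H\in\mac L$ implies $H,\overline H\in\mac K$, then apply \cref{thm:mainorderedpair} with $J:=\overline H$, so that $\{H,\overline J\}=\{H\}$ is viral. The reduction step is fine. The gap is in your induction for the structural claim, specifically in the end-vertex case. In each of the four $\mac L$-rules, exactly one of $H,\overline H$ receives a new \emph{degree-one} end vertex, and for that graph your appeal to the first $\mac K$-building operation is correct. But for the other graph the new vertex has degree $|H|-2$, and your justification --- ``an end vertex of degree one in the complement, so membership in $\mac K$ as well'' --- is not a valid inference: the $\mac K$-operations require degree at most one \emph{in the graph being built}, and $\mac K$ is not closed under complementation. (For instance, the monotone path on four vertices is in $\mac K$, while its complement with the same order is prime and both of its end vertices have degree two, so it is not in $\mac K$.) So that half of each inductive step is unproved as written.

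Worse, the fix is exactly the hypothesis you declare irrelevant. The condition that the first vertex $a$ of $H'$ is adjacent (or nonadjacent, in the complemented variants) to only the last vertex is what makes the ``hard'' graph lie in $\mac K$: after the new last vertex $z$ is added, $a$ becomes an end vertex of degree one in that graph (it is adjacent only to the old last vertex $b$, and not to $z$), so one builds that graph by first forming its restriction to $V(H)\setminus\{a\}$ --- which is $H'\setminus\{a\}$ (or its complement) with a dominating last vertex, i.e.\ a substitution of $H'\setminus\{a\}\in\mac K$ into a two-vertex ordered graph, using closure of $\mac K$ under induced ordered subgraphs --- and then adding $a$ as a first vertex of degree one. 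Without the condition on $a$ there is no candidate low-degree end vertex to add last, and the claim ``$H,\overline H\in\mac K$'' is not justified (the condition is not merely there to preserve primeness). Your write-up needs this argument (it is the analogue of the proof of \cref{lem:char}), together with the observation that $\mac K$ is closed under induced ordered subgraphs, to be complete.
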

	
	Let us say that an ordered graph $H$ has the {\em \erh{} property} if there exists $c>0$ such that if $G$ is an $H$-free
	ordered graph, then there is a clique or stable set in $G$ with size at least $|G|^c$. And as with unordered graphs,
	if an ordered graph $H$ is obtained by vertex-substitution from two smaller ordered graphs with the \erh{} property, then $H$ has the \erh{} property. So we would like to find prime ordered graphs that have the \erh{} property.
	It is striking that, until now, there were none known with more than three vertices. In \cref{fig:fourprime} we show all the 
	prime ordered graphs on four vertices (up to taking complements and reversing the order); as we said, none of them were previously known to 
	have the \erh{} property. Our result shows that the first, the fifth and the seventh have the property, and indeed are viral,
	because they belong to $\mathcal{L}$.
	\begin{figure}[h]
		\centering
		
		\tikzstyle{every node}=[inner sep=1.5pt, fill=black,circle,draw]
		\begin{tikzpicture}[scale=0.8,auto=left]
			\node (a1) at (0,7) {};
			\node (a2) at (2,7) {};
			\node (a3) at (4,7) {};
			\node (a4) at (6,7) {};
			\node (b1) at (0,6) {};
			\node (b2) at (2,6) {};
			\node (b3) at (4,6) {};
			\node (b4) at (6,6) {};
			\node (c1) at (0,5) {};
			\node (c2) at (2,5) {};
			\node (c3) at (4,5) {};
			\node (c4) at (6,5) {};
			\node (d1) at (0,4) {};
			\node (d2) at (2,4) {};
			\node (d3) at (4,4) {};
			\node (d4) at (6,4) {};
			\node (e1) at (0,3) {};
			\node (e2) at (2,3) {};
			\node (e3) at (4,3) {};
			\node (e4) at (6,3) {};
			\node (f1) at (0,2) {};
			\node (f2) at (2,2) {};
			\node (f3) at (4,2) {};
			\node (f4) at (6,2) {};
			\node (g1) at (0,1) {};
			\node (g2) at (2,1) {};
			\node (g3) at (4,1) {};
			\node (g4) at (6,1) {};
			
			\foreach \from/\to in {a1/a2,b1/b2,d1/d2,d2/d3,d3/d4,e1/e2,e3/e4,f1/f2,f3/f4,g1/g2,g2/g3}
			\draw [-] (\from) -- (\to);
			\foreach \from/\to in {b2/b4, c1/c3,c2/c4,e1/e3}
			\draw [-] (\from) [bend left=15] to (\to);
			\foreach \from/\to in {a1/a4,g1/g4,f1/f4}
			\draw [-] (\from) [bend left=20] to (\to);

		\end{tikzpicture}
		\caption{
			The four-vertex prime ordered graphs (up to complements and reversal).
		} \label{fig:fourprime}
	\end{figure}
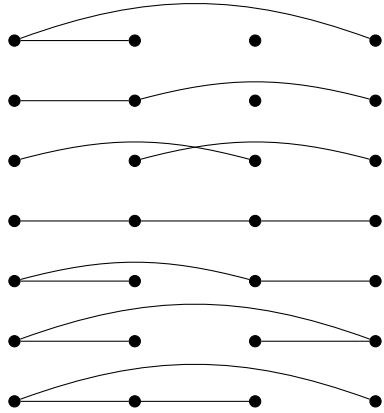
	
	Next, let us look at corollaries of \cref{thm:mainorderedpair} for unordered graphs.
	We will use a nice correspondence between the classes $\mac L, \mac K$ of ordered graphs and the classes $\mac H, \mac J$
	of (unordered) graphs:
	\begin{lemma}\label{ordering}
		If $G\in \mathcal{K}$ then $G\nat\in \mathcal{J}$; and if $F\in \mathcal{J}$, there is a linear order $\le $ of $V(F)$
		such that $(F,\le)\in \mathcal{K}$. Similarly, if $G\in \mathcal{L}$ then $G\nat\in \mathcal{H}$; and if $F\in \mathcal{H}$, there is a linear order $\le $ of $V(F)$
		such that $(F,\le)\in \mathcal{L}$.
	\end{lemma}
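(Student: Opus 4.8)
The statement is a pair of equivalences between membership in the ordered classes $\mac K,\mac L$ and the existence of a good ordering of a graph in the unordered classes $\mac J,\mac H$. The natural strategy is induction on the number of vertices, peeling off the last operation used in the construction. There are two directions to each half. For the forward direction (``$G\in\mac K \Rightarrow G\nat\in\mac J$''), I would argue that each of the two operations defining $\mac K$ projects onto an operation defining $\mac J$: adding a vertex of degree at most one at an end of the linear order becomes ``add a vertex of degree at most one'' (forgetting the order), and ordered vertex-substitution becomes ordinary vertex-substitution on the underlying graphs (this is immediate from the definition of ordered substitution, whose underlying graph is by construction the unordered substitution). So a construction sequence for $G$ in $\mac K$ maps term-by-term to a construction sequence for $G\nat$ in $\mac J$; nothing is lost. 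The same bullet-by-bullet projection handles ``$G\in\mac L\Rightarrow G\nat\in\mac H$'': the ``first vertex adjacent only to the last vertex, add a new last vertex adjacent to everything else'' operation of $\mac L$ projects to ``choose a vertex of degree at most one, add a vertex adjacent to all but that vertex'', which is exactly the second bullet defining $\mac H$ (and the three symmetric variants cover the first bullet and its complement); substitution again projects to substitution.

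For the reverse direction (``$F\in\mac J\Rightarrow$ some ordering lies in $\mac K$''), I would induct on $|F|$, examining the last operation in a $\mac J$-construction of $F$. If $F$ is a one-vertex graph, any order works. If $F$ was obtained from $F_1$ by adding a vertex $u$ of degree at most one, then by induction there is an order $\le_1$ on $V(F_1)$ with $(F_1,\le_1)\in\mac K$; extend $\le_1$ by placing $u$ at whichever end is permitted — here one must note that $\mac K$ allows adding a degree-$\le 1$ vertex at \emph{either} end, so $u$ can simply be appended as a new last vertex, and the resulting ordered graph is $(F_1,\le_1)$ with an end vertex of degree $\le1$ added, hence in $\mac K$. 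If $F$ was obtained by substituting $F_2$ for a vertex $v$ of $F_1$, then by induction pick orders making $(F_1,\le_1),(F_2,\le_2)\in\mac K$, and form the ordered substitution of $(F_2,\le_2)$ for $v$ in $(F_1,\le_1)$: its underlying graph is exactly $F$, and it lies in $\mac K$ because $\mac K$ is closed under ordered substitution. This gives the required order. The reverse direction for $\mac H\to\mac L$ is completely parallel: the only subtlety is matching up which of the four symmetric $\mac L$-operations to invoke. Concretely, if $F\in\mac H$ was built by the first $\mac H$-bullet (add a vertex adjacent only to $v$, where $v$ had degree $\ge|F'|-1$ in $F'$), then in the inductively obtained $(F',\le')\in\mac L$ the vertex $v$ has degree $\ge|F'|-1$, i.e.\ in the complement $v$ is isolated or nearly so; one checks that $v$ can be moved to (or already is at) an end, and then applies the appropriate variant of the $\mac L$-operation. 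The analogous bookkeeping handles the second $\mac H$-bullet and substitution.

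The step I expect to be the main obstacle is precisely this last point: verifying in the reverse direction that the vertex to which we attach (the ``$v$'' of the $\mac H$/$\mac J$ bullets) can always be taken to sit at an end of the inductively constructed linear order, since the $\mac K$/$\mac L$ operations are only permitted at the ends. For $\mac J\to\mac K$ this is painless — we add a degree-$\le1$ vertex, which we simply make the new last vertex, and we never need to constrain where its neighbour sits, because ``$\mac K$ allows adding a degree-$\le1$ vertex at an end'' does not require the neighbour to be anywhere particular. For $\mac H\to\mac L$ it is slightly more delicate: the $\mac L$-operation ``the first vertex $a$ is adjacent only to the last vertex; add a new last vertex adjacent to $V(H)\setminus\{a\}$'' does constrain the configuration, so one must check that after substitutions and earlier operations the relevant low-degree (or high-degree) vertex genuinely ends up at the correct end with the correct single (non)neighbour. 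I would handle this by strengthening the inductive hypothesis slightly — carrying along the information of \emph{which} end-vertex plays the distinguished role at each stage — or, more cleanly, by first establishing the structural characterisations ``$F\in\mac J$ iff every induced subgraph has a vertex of degree $\le1$ or is imprime'' and ``$F\in\mac H$ iff every nontrivial prime induced subgraph has a degree-one vertex and a degree-$(|F'|-2)$ vertex'' (both stated in the excerpt), and then reasoning directly about where such vertices can be placed, rather than tracking a construction sequence. Either route reduces the lemma to routine but careful verification; I would present the cleaner characterisation-based argument, citing \cref{lem:char} for the $\mac K,\mac L$ side as the excerpt suggests.
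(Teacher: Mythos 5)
Your handling of three of the four implications is essentially right: forgetting the order sends each $\mac K$- or $\mac L$-operation to an operation available for $\mac J$ or $\mac H$ (one small repair: the two ``complemented'' variants of the $\mac L$-operation project to adding a vertex adjacent only to a vertex of degree $|H|-2$, which is \emph{not} the first bullet of $\mac H$; you need the closure of $\mac H$ under complementation, which the paper records, or the characterisation of $\mac H$), and for $\mac J\to\mac K$ appending the new degree-$\le 1$ vertex as a new last vertex works because the $\mac K$-operation puts no constraint on where its neighbour sits. The paper omits its proof altogether, so there is no route to compare against; judged on its own, however, your treatment of the fourth implication, $\mac H\to\mac L$, has a genuine gap, and it is exactly at the point you flag and then defer.

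The strengthening you propose (``carry along which end-vertex plays the distinguished role'') amounts, in its natural form, to: for every $F\in\mac H$ and every vertex $v$ of degree at most one with neighbour $w$, some ordering of $F$ lies in $\mac L$ with $v$ first and $w$ last --- this is precisely what is needed to simulate the second $\mac H$-bullet applied at $v$ by the corresponding $\mac L$-operation. That statement is false. Take $F$ to be the left graph of \cref{fig:smallest}: a path $a_1a_2a_3a_4$, a vertex $a_5$ adjacent to $a_1,a_2,a_3$, and a pendant $a_6$ at $a_5$; it is prime and belongs to $\mac H$. Since $F$ is prime, the last effective step of any $\mac L$-construction of an ordering of $F$ is one of the four end-vertex operations, and checking the candidates for the last-added vertex ($a_4$ is impossible, because its neighbour $a_3$ has two non-neighbours in $F\setminus a_4$; the remaining cases add $a_5$ at an end or add $a_6$ with $a_5$ forced to the opposite end) shows that $a_5$ is an end vertex of \emph{every} $\mac L$-ordering of $F$. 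Hence no $\mac L$-ordering has $a_4$ first and $a_3$ last, even though $a_4$ has degree one with neighbour $a_3$, and even though the $\mac H$-construction is perfectly entitled to apply its second bullet to $F$ at $v=a_4$ (the resulting $7$-vertex graph is in $\mac H$ but not prime --- the new vertex and $a_5$ are twins --- so that step must instead be rerouted through the substitution case). So ``follow the given $\mac H$-construction and extend the order'' cannot work as stated: the correct inductive statement must either pass to the modular decomposition whenever the step creates a homogeneous pair and prove a prescribed-ends claim only under hypotheses reflecting primality of the extended graph, or abandon the given construction sequence and choose which vertex to peel (together with the right information about its neighbour or non-neighbour). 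Formulating and proving that strengthened statement is the real content of this direction, and neither of your two suggested routes supplies it; the second (``reason directly from the characterisations about where such vertices can be placed'') is a restatement of the problem rather than a solution.
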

	The proofs are straightforward arguments by induction and we omit them.
	We will also need:
	\begin{lemma}\label{loseorder}
		Let $\mathcal{F}$ be a finite set of ordered graphs, and let $\mathcal{F}\nat$ be the set $\{F\nat:F\in \mathcal{F}\}$. If $\mathcal{F}$
		is viral then $\mathcal{F}\nat$ is viral.
	\end{lemma}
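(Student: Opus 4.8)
The plan is to show that any viral exponent for $\mathcal{F}$ (as a set of ordered graphs) works, up to a constant factor, as a viral exponent for $\mathcal{F}\nat$. The key point is to transfer a bound on $\mu_{\mathcal{F}\nat}$ to a bound on $\mu_{\mathcal{F}}$ after passing to a suitable induced subgraph, and then to feed that subgraph into the ordered-graph virality of $\mathcal{F}$. First I would fix a viral exponent $d$ for $\mathcal{F}$, set (say) $d':=d+1$, and take an arbitrary graph $G$ with $\mu_{\mathcal{F}\nat}(\vep^{d'},G)\le 1$; the goal is to produce an $\vep$-restricted $S\subseteq V(G)$ with $|S|\ge\vep^{d'}|G|$. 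The natural move is to impose an \emph{arbitrary} linear order on $V(G)$, obtaining an ordered graph $G^\circ$ with $(G^\circ)\nat=G$. The obstacle is that a random or arbitrary order will not make $\ind_F(G^\circ)$ small for $F\in\mathcal{F}$: a copy of $F\nat$ need not respect the order. So the real content is an \textbf{ordering lemma}: among all linear orders of $V(G)$, some order $\le$ satisfies, for each ordered graph $F\in\mathcal F$ with $f:=|F|$,
\[
\ind_F\big((G,\le)\big)\ \le\ \frac{\ind_{F\nat}(G)}{f!}\ \cdot\ (\text{something harmless}),
\]
or more simply, there is an order in which at least a $1/f!$ fraction of the copies of $F\nat$ are ``correctly oriented'' --- wait, that is the wrong direction. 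What we actually want is an order minimising the number of correctly-oriented copies, and the averaging goes the other way: a \emph{uniformly random} order makes each unordered copy of $F\nat$ into a copy of the ordered $F$ with probability exactly $1/|\mathrm{Aut}|$-ish, in any case at most $1$ and at least $1/f!$ only in expectation, so \emph{some} order has $\ind_F((G,\le))\le \ind_{F\nat}(G)$ trivially (it's a sub-count in expectation, so in particular there is an order achieving at most the expectation $\ind_{F\nat}(G)/(\text{orbit size})\le \ind_{F\nat}(G)$). That crude bound $\ind_F((G,\le))\le \ind_{F\nat}(G)$ for \emph{every} order is in fact immediate, since a copy of an ordered graph $F$ in $(G,\le)$ is in particular a copy of $F\nat$ in $G$ — so no ordering lemma is needed at all.

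Given that observation, the proof is short. Fix a viral exponent $d$ for $\mathcal F$ and claim $d$ is also a viral exponent for $\mathcal F\nat$. Let $\vep\in(0,\tfrac12)$ and let $G$ be a graph with $\mu_{\mathcal F\nat}(\vep^d,G)\le 1$; impose any linear order on $V(G)$ to get an ordered graph $G^\circ$. For each $F\in\mathcal F$, every copy of $F$ in $G^\circ$ is a copy of $F\nat$ in $G$, so $\ind_F(G^\circ)\le \ind_{F\nat}(G)\le (\vep^d|G|)^{|F\nat|}=(\vep^d|G^\circ|)^{|F|}$; hence $\mu_{\mathcal F}(\vep^d,G^\circ)\le 1$. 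Since $d$ is a viral exponent for the ordered family $\mathcal F$, there is an $\vep$-restricted $S\subseteq V(G^\circ)$ with $|S|\ge\vep^d|G^\circ|$. But ``$\vep$-restricted'' for an ordered graph means $\vep$-restricted for its underlying graph, i.e.\ $G^\circ[S]\nat=G[S]$ is $\vep$-restricted; and $|G^\circ|=|G|$. So $S\subseteq V(G)$ is an $\vep$-restricted set of size at least $\vep^d|G|$, as required. This shows $\mathcal F\nat$ is viral.

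The only step requiring a moment's care is the inequality $\ind_F(G^\circ)\le\ind_{F\nat}(G)$ together with the size identity $|F|=|F\nat|$, which lets the exponents match exactly so that no loss in the exponent $d$ is incurred; everything else is bookkeeping, and in particular there is no genuine obstacle here — the statement is essentially a triviality once one unwinds that a copy of an ordered graph is a more restrictive notion than a copy of its shadow. I would present it in exactly the three sentences above, noting explicitly that this is why \emph{imposing} an order only makes the copy-count go down, whereas \emph{forgetting} an order (which is the harder direction, handled separately) can make it go up.
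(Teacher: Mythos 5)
Your final argument is correct and is essentially the paper's own proof: impose an arbitrary linear order, note that every copy of an ordered $F$ is in particular a copy of $F\nat$ so that $\mu_{\mathcal F}(\vep^d,\cdot)\le\mu_{\mathcal F\nat}(\vep^d,G)\le 1$, and then apply the virality of $\mathcal F$ directly, with the same exponent $d$. The preliminary detour about an ``ordering lemma'' is unnecessary, as you yourself conclude, and could simply be deleted.
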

	\begin{proof}
		Let $d$ be a viral exponent for $\mathcal{F}$; we claim $d$ is also a viral exponent for $\mathcal{F}\nat$.
		Let $\vep\in(0,\frac12)$, and let $G$ be a graph with $\mu_{\mac F\nat}(\vep^d,G)\le 1$.
		Choose a linear order of $V(G)$, to obtain an ordered graph $H$ with $H\nat=G$.
		Thus $\mu_{\mac F}(\vep^d,G)\le \mu_{\mac F\nat}(\vep^d,G)\le 1$, and so
		there is an $\vep$-restricted $S\subseteq V(G)$ with $\abs S\ge \vep^d\abs G$. This proves \cref{loseorder}.
	\end{proof}
	
	From \cref{ordering} and \cref{loseorder}, we see that \cref{thm:mainorderedpair} implies:
	
	\begin{theorem}\label{thm:mainunordered}
		If $H_1,H_2\in \mathcal{J}$, then $\{H_1,\overline{H_2}\}$ is viral.
	\end{theorem}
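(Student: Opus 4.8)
The plan is to read off \cref{thm:mainunordered} directly from the ordered version \cref{thm:mainorderedpair}, using the dictionary between $\mac J$ and $\mac K$ recorded in \cref{ordering} together with the order-forgetting principle \cref{loseorder}. No new ideas are needed beyond these three ingredients: essentially all the content lives in the ordered statement, and the remaining steps are bookkeeping.

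Concretely, suppose $H_1,H_2\in\mac J$. First I would apply \cref{ordering} to each of $H_1$ and $H_2$ separately, obtaining linear orders of their vertex sets that yield ordered graphs $K_1,K_2\in\mac K$ with $K_1\nat=H_1$ and $K_2\nat=H_2$. Since $K_1,K_2\in\mac K$, \cref{thm:mainorderedpair}, applied with $H:=K_1$ and $J:=K_2$, tells us that the pair of ordered graphs $\{K_1,\overline{K_2}\}$ is viral. The point to keep in mind here is that we do \emph{not} need $\overline{K_2}$ itself to lie in $\mac K$ (indeed $\mac K$ is not closed under complementation): the complement is already built into the conclusion of \cref{thm:mainorderedpair}, so it suffices that $K_1$ and $K_2$ both lie in $\mac K$.

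Finally, I would pass back to unordered graphs. Forgetting the orders, the underlying graphs of $K_1$ and $\overline{K_2}$ are $K_1\nat=H_1$ and $(\overline{K_2})\nat=\overline{K_2\nat}=\overline{H_2}$, so $\{K_1,\overline{K_2}\}\nat=\{H_1,\overline{H_2}\}$. Since $\{K_1,\overline{K_2}\}$ is viral, \cref{loseorder} gives that $\{H_1,\overline{H_2}\}$ is viral, which is exactly the assertion of \cref{thm:mainunordered}. The only step carrying any real weight is the invocation of \cref{thm:mainorderedpair} (and, behind the scenes, the inductive verification of \cref{ordering} that every graph in $\mac J$ admits an ordering landing in $\mac K$); the rest of the argument is a short deduction, whose sole subtlety is the remark above about complementation and $\mac K$.
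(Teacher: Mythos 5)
Your proposal is correct and matches the paper's own derivation: the paper likewise obtains \cref{thm:mainunordered} by ordering $H_1,H_2$ into $\mac K$ via \cref{ordering}, applying \cref{thm:mainorderedpair}, and forgetting the order with \cref{loseorder}. Your remark that $\overline{K_2}$ need not itself lie in $\mac K$ is accurate and consistent with how the paper uses these lemmas.
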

	This implies \cref{thm:pairsEH}; and by
	taking $H_1=H_2$, we deduce from \cref{lem:char} below (mentioned following \cref{thm:pairsEH}) that \cref{thm:mainunordered} implies
	\cref{thm:viral}.

	\begin{lemma}
		\label{lem:char}
		$H\in\mac H$ if and only if $H,\overline H\in\mac J$.
	\end{lemma}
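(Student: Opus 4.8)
The statement to prove is \cref{lem:char}: $H\in\mac H$ if and only if $H,\overline H\in\mac J$. Recall that $\mac H$ is characterized (fourth bullet after the definition) by: $H\in\mac H$ iff every non-trivial prime induced subgraph $H'$ of $H$ has a vertex of degree one \emph{and} a vertex of degree $|H'|-2$; and $\mac J$ is characterized (the ``equivalently'' after its definition) by: $J\in\mac J$ iff every induced subgraph of $J$ is either non-prime or has a vertex of degree at most one. Since the only induced subgraphs that are prime and for which ``degree at most one'' is a genuine restriction are the non-trivial ones (graphs on at most two vertices always have such a vertex trivially), the $\mac J$ condition reads: every non-trivial prime induced subgraph has a vertex of degree one. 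So the plan is simply to combine these two characterizations: $H,\overline H\in\mac J$ says that every non-trivial prime induced subgraph of $H$ has a vertex of degree one, and every non-trivial prime induced subgraph of $\overline H$ has a vertex of degree one.

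The key observation that makes everything line up is that $H'$ is a prime induced subgraph of $H$ if and only if $\overline{H'}$ is a prime induced subgraph of $\overline H$ — primeness is self-complementary (the defining condition ``no proper $S$ with $|S|\ge 2$ all having the same neighbourhood outside $S$'' is unchanged under complementation), and of course induced subgraphs of $H$ correspond to induced subgraphs of $\overline H$. Moreover a vertex has degree one in $\overline{H'}$ exactly when it has degree $|H'|-2$ in $H'$. Hence: ``every non-trivial prime induced subgraph of $\overline H$ has a vertex of degree one'' is equivalent to ``every non-trivial prime induced subgraph of $H$ has a vertex of degree $|H'|-2$''. Combining, $H,\overline H\in\mac J$ is equivalent to the statement that every non-trivial prime induced subgraph $H'$ of $H$ has both a vertex of degree one and a vertex of degree $|H'|-2$, which is exactly the characterization of $H\in\mac H$.

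So the proof is really just: (i) record the characterization of $\mac J$ in terms of non-trivial prime induced subgraphs having a leaf (a one-line unwinding, using that graphs on $\le 2$ vertices are non-prime or trivially have a low-degree vertex — actually they \emph{are} prime but trivially satisfy the degree condition, so they cause no trouble); (ii) record the self-complementarity of primeness and the degree-one/degree-$(|H'|-2)$ duality under complement; (iii) assemble. One should also double-check the edge cases: the empty graph and one/two-vertex graphs all lie in $\mac H$, $\mac J$, and their complements, consistently.

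\textbf{Main obstacle.} There is no real obstacle; the only thing requiring a little care is making sure the characterization of $\mac J$ quoted in the paper (``every induced subgraph is non-prime or has a vertex of degree $\le 1$'') is faithfully reduced to the ``non-trivial prime induced subgraph'' form without losing or gaining cases — in particular that the two-vertex prime graphs ($K_2$ and its complement) do not create a discrepancy between the $\mac J$-condition and the $\mac H$-condition. Since both a two-vertex graph and its complement trivially have a vertex of degree $\le 1$, and likewise trivially satisfy the $\mac H$ degree requirements vacuously (we only require the condition for $|H'|\ge 3$), these cases are consistent, and the equivalence goes through cleanly. The write-up will therefore be short: state both characterizations, invoke complementation-invariance of primeness, and conclude.
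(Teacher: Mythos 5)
Your argument is correct, but it is a genuinely different route from the paper's. You derive \cref{lem:char} purely from two facts the paper states but never proves: the bullet asserting that $H\in\mac H$ if and only if every non-trivial prime induced subgraph $H'$ has a vertex of degree one and one of degree $|H'|-2$, and the ``equivalently'' reformulation of $\mac J$ as the graphs in which every prime induced subgraph has a vertex of degree at most one. Granting those, your proof is just complementation-invariance of primeness plus the degree-one/degree-$(|H'|-2)$ duality, with one small point you gloss over: to pass from ``degree at most one'' to ``degree exactly one'' (and, in the complement, to exactly $|H'|-2$) in the backward direction, you should note that a non-trivial prime graph has no isolated and no universal vertex, since otherwise the remaining $|H'|-1\ge 2$ vertices all have the same neighbourhood in the singleton and primeness fails; this is a one-line fix. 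The paper instead proves \cref{lem:char} directly from the constructive definitions of $\mac H$ and $\mac J$, by induction on $|H|$ in each direction: in the non-prime case it splits $H$ as a substitution and applies the inductive hypothesis to the two factors, and in the prime case it locates a degree-one vertex of $H$ and of $\overline H$ and peels it off, checking the two subcases according to whether these two vertices are adjacent. The trade-off is that your version is shorter and more conceptual, but the inductive work has merely been displaced into the unproved ``easy to see'' characterizations (whose proofs are of exactly the same flavour as the paper's argument), whereas the paper's proof is self-contained given only the generating operations; if you wanted your write-up to stand alone you would still owe an induction establishing those two characterizations.
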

	\begin{proof}
		We first prove the ``only if'' direction by induction.
		Let $H\in\mac H$.
		If $\abs H\le 3$ then obviously $H,\overline H\in \mac J$,
		so we may assume $\abs H\ge 4$,
		and every induced subgraph $J$ of $H$ with $\abs J<\abs H$ satisfies $J,\overline J\in \mac J$.
		If $H$ is not prime then $H,\overline H\in\mac J$ by definition,
		so we assume it is prime. From the definition of $\mac H$, we may assume (replacing $H$ by $\overline{H}$ if necessary)
		that there is a vertex $v$ of $H$ with degree one, such that its neighbour $u$ is adjacent to all except one vertex of $H\setminus \{v\}$.
		So $H\in \mac J$; and also $\overline{H}\in \mac J$, since $u$ has degree one in $\overline H$.
		This completes the inductive step, and so prove the ``only if'' part.
		
		Now, we prove the ``if'' direction, again by induction.
		Let $H$ be such that $H,\overline H\in\mac J$.
		If $\abs H\le 3$ then $H\in\mac H$ and we are done;
		so we may assume $\abs H\ge 4$.
		Thus, since $H\in\mac J$,
		either $H$ contains a vertex of degree one or $H$ is not prime.
		If $H$ is not prime, then it can be obtained by substituting some graph $H_2$ for a vertex of some graph $H_1$,
		where $H_1,H_2$ are induced subgraphs of $H$ with $\abs{H_1},\abs{H_2}<\abs{H}$.
		Hence $H_i,\overline{H_i}\in\mac J$ for all $i\in\{1,2\}$; and so the induction hypothesis implies that $H_1,H_2\in\mac H$, which yields $H\in\mac H$.
		Hence, we may assume that $H$ has a vertex $u$ of degree one; and similarly, we may assume that $\overline H$ has a vertex $v$ of degree one.
		Since $\abs H\ge 4$, $u\ne v$.
		If $uv\in E(H)$ then $v$ is the unique neighbour of $u$ in $H$, and so $H\in\mac H$ since $H\setminus u\in\mac H$ by induction.
		If $uv\nin E(H)$ then $u$ is the unique neighbour of $v$ in $\overline H$, and so $\overline H\in\mac H$ since $\overline H\setminus v=\overline{H\setminus v}\in \mac H$ by induction.
		Therefore $H\in\mac H$, and this proves \cref{lem:char}.
	\end{proof}
	Now we prove that every prime graph in $\mac H$ is a split graph, which was the last bullet prior to \cref{thm:eh}. 
	\begin{lemma}
		\label{lem:prime}
		Every prime graph $H\in\mac H$ is split.
	\end{lemma}
	\begin{proof}
		The proof is by induction on $\abs H$. For $z\in V(H)$ and $Z\subset V(H)\setminus\{z\}$, we say that $z$ is {\em complete to} $Z$ in $H$ if it is adjacent in $H$ to every vertex in $Z$, and {\em pure to} $Z$ if it has no neighbour in $Z$ or no nonneighbour in $Z$.
		For $S\subset V(H)\setminus Z$, we say that $S$ is {\em complete to} $Z$ in $H$ if $z$ is complete to $Z$ in $H$ for all $z\in S$, and that $Z$ is {\em homogeneous} in $H$ if every vertex in $V(H)\setminus Z$ is pure to $Z$.
		
		Recall that $H$ has a vertex $u$ of degree $\abs H-2$ and a vertex $v$ of degree one in $H$; and we may assume $u,v$ are adjacent in $H$ and $\abs H\ge 5$. Let $w$ be the unique nonneighbour of $u$ in $H$.	
		Let $X\subset V(H)\setminus\{u,v\}$ be maximal such that $X$ is a clique and is complete to $V(H)\setminus
		(X\cup \{v\})$ in $H$. Then $X$ is homogeneous in $H$; and so $\abs X\le 1$ since $H$ is prime. Let $J:=H\setminus X$; then $\abs J\ge 4$ and $u,w\in V(J)$. Thus every homogeneous set of $J$ not containing $v$ is also homogeneous in $H$.
		
		\begin{claim}
			\label{claim:prime1}
			We may assume that $J\setminus v$ is not prime.
		\end{claim}
		\begin{subproof}
			Suppose that $J\setminus v$ is prime.
			Then by induction, there is a partition $V(J)\setminus\{v\}=S\cup K$ where $S$ is a stable set in $J\setminus v$ and $K$ is a clique in $J\setminus v$.
			Since $\abs{J\setminus v}\ge \abs H-2\ge 3$, every vertex in $S$ has a neighbour in $K$.
			If $u\in S$, then either $w\in S$ which implies $S=\{u,w\}$ and $u$ is complete to $K$, or $w\in K$ which yields $S=\{u\}$ and $\abs K\ge 2$. In either case there would be some $x\in K$ complete to $S$ and hence to $V(J)\setminus\{v,x\}$, contrary to the primeness of $J\setminus v$ since $\abs{J\setminus\{v,x\}}\ge 2$. Thus $u\in K$; and so $V(H)=(S\cup\{v\})\cup (K\cup X)$ is a desired partition of $V(H)$.
			This proves \cref{claim:prime1}.
		\end{subproof}
		
		Now, by \cref{claim:prime1}, there is a homogeneous set $Z$ in $J\setminus v$ with $1<\abs Z<\abs J$; we may assume that $\abs Z$ is maximal. If $u\nin Z$ then $Z$ would be homogeneous in $J$ (and so in $H$), contrary to the primeness of $H$; hence $u\in Z$. We claim that:
		\begin{claim}
			\label{claim:prime4}
			$w\nin Z$.
		\end{claim}
		\begin{subproof}
			Suppose that $w\in Z$. Then $u$ is complete to $V(J\setminus v)\setminus Z$; and so $V(J\setminus v)\setminus Z$ is complete to $Z$, since $Z$ contains $u$ and is homogeneous in $J\setminus v$. It follows that $V(J\setminus v)\setminus Z$ is a homogeneous set in $J$ and so in $H$. Then $\abs{(J\setminus v)\setminus Z}=1$ since $H$ is prime; but then $X\cup (V(J\setminus v)\setminus Z)$ contradicts the maximality of $X$. This proves \cref{claim:prime4}.
		\end{subproof}
		
		Now, let $F:=J\setminus\{u,v\}$; then $\abs F\ge 2$. We prove that $F$ is prime as follows.
		\begin{claim}
			\label{claim:prime2}
			$F$ is prime; in particular $\abs F\ge 4$.
		\end{claim}
		\begin{subproof}
			Suppose not; then $F$ has a homogeneous set $L$ with $1<\abs L<\abs F=\abs J-2$. If $w\nin L$ then $u$ is complete to $L$ in $H$; and so $L$ would be homogeneous in $H$, a contradiction. Hence $w\in L$.

			By \cref{claim:prime4}, $u$ is complete to $Z\setminus \{u\}$ in $H$; and so $\abs Z=2$ since $H$ is prime.
			Let $Z=\{u,y\}$.
			If $y\nin L$, then $L$ contains a neighbour of $u$ since $\abs L>1$, which implies that $y$ is complete to $L$ in $H$; and so $w$ has a neighbour (which is $y$) and a nonneighbour (which is $u$) in $Z$, a contradiction.
			Thus $y\in L\cap Z$; and so 
			every vertex in $V(J\setminus v)\setminus(L\cup Z)$ (if exists) is pure to $L\cup Z$ via $y$, which implies that $L\cup Z$ is homogeneous in $J\setminus v$. Since $w\in L\setminus Z$, we have $\abs{L\cup Z}>\abs Z$; and so $V(J\setminus v)=L\cup Z$ by the maximality of $Z$. But then
			$\abs J=\abs{L\cup Z\cup\{v\}}\le\abs L+\abs Z<\abs J$, a contradiction.
			This proves \cref{claim:prime2}.
		\end{subproof}
		By \cref{claim:prime2} and induction, there is a partition $V(F)=S\cup K$ where $S$ is a stable set in $F$ and $K$ is a clique in $F$. Because $F$ is prime and $\abs F\ge 4$, $S$ is nonempty and every vertex in $S$ has a neighbour in $K$; and so $\abs K\ge2$. We claim that:
		\begin{claim}
			\label{claim:prime3}
			$w\in S$.
		\end{claim}
		\begin{subproof}
			Suppose that $w\in K$. Since $w\nin Z$ by \cref{claim:prime4} and $u\in Z$, $w$ is anticomplete to $Z$. Thus $K,Z$ are disjoint; and so $Z$ intersects $S$ because $\abs Z\ge 2$.
			If there exists $z'\in S\setminus Z$ then $z'$ is adjacent to $u$ and anticomplete to $S\cap Z$, a contradiction since $Z$ is homogeneous in $J\setminus v$.
			Hence $S\subset Z$; and so $Z=S\cup\{u\}=V(J\setminus v)\setminus K$ is complete to $K\setminus\{w\}$. But then $X\cup (K\setminus\{w\})$ violates the maximality of $X$ since $\abs K\ge 2$.
			%
			%
			This proves \cref{claim:prime3}.
		\end{subproof}
		Now, \cref{claim:prime3} implies that $V(H)=(S\cup\{v\})\cup(K\cup X\cup\{u\})$ is a desired partition of $V(H)$.
		This proves \cref{lem:prime}.
	\end{proof}
	In view of \cref{lem:prime} and~\cite[Theorem 3.3]{MR3416129}, it would be interesting to know if every split graph has the \erh{} property, which would unify \cref{thm:eh} and~\cite[Theorem 1.2]{density6}. 
	
	There is also a version of \cref{axioms} for unordered graphs:
	\begin{theorem}\label{unorderedaxioms}
		Let $\mac F$ be a finite set of graphs, and let $H_1,\overline{H_2}\in \mathcal{F}$.
		For $ i = 1,2$, let $v_i$ be a vertex of $H_i$ with degree one, 
		and let $H_i'=H_i\setminus \{v_i\}$. If $\mac F_1:=\{H_1'\}\cup (\mac F\setminus \{H_1\})$ 
		and $\mac F_2:=\{\overline{H_2'}\}\cup (\mac F\setminus \{\overline{H_2}\})$ are both viral then
		$\mac F$ is viral.
	\end{theorem}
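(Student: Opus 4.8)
The plan is to deduce the unordered statement from the ordered one, \cref{axioms}, by choosing a good ordering. Given the unordered graphs $H_1$ and $\overline{H_2}$, with $v_1$ a degree-one vertex of $H_1$ and $v_2$ a degree-one vertex of $H_2$, the first step is to apply \cref{ordering} (or rather its underlying inductive construction) to fix orderings that put $v_1$ and $v_2$ in convenient positions. More precisely, for each ordered graph $F$ in $\mac F$ I would pick {\em some} linear order of $V(F\nat)$; call the resulting set of ordered graphs $\tit{\mac F}$, so that $\tit{\mac F}\nat=\mac F$. The key point is that we are free to choose the orders on the ordered graphs $\tit H_1$ and $\tit{\overline{H_2}}$ coming from $H_1$ and $\overline{H_2}$ so that $v_1$ is an end vertex of $\tit H_1$ and $v_2$ is an end vertex of $\tit{\overline{H_2}}$ (equivalently, of $\overline{\tit{H_2}}$): a single vertex of any graph can always be moved to one end of the order without changing the underlying graph, and this does not affect whether the other graphs are in $\mac K$ or anything else, since those orders are arbitrary anyway.

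The second step is to invoke \cref{axioms} with the ordered family $\tit{\mac F}$, the ordered graph $H:=\tit H_1$, and $J:=\tit{H_2}$ (so $\overline J=\tit{\overline{H_2}}\in\tit{\mac F}$). The hypotheses of \cref{axioms} are exactly that $\{H'\}\cup(\tit{\mac F}\setminus\{H\})$ and $\{\overline{J'}\}\cup(\tit{\mac F}\setminus\{\overline J\})$ are viral, where $H'=H\setminus\{v_1\}$ and $J'=J\setminus\{v_2\}$; these are the ordered versions of $\mac F_1$ and $\mac F_2$ in the statement. So I need to know that the ordered families corresponding to $\mac F_1$ and $\mac F_2$ are viral. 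But this is where \cref{loseorder} runs the wrong way — it turns viral ordered families into viral unordered ones, not conversely. The fix is that we do not actually need the converse: we are free to {\em choose} which ordered family we work with, and any ordering of an unordered viral family is {\em not} automatically viral.

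To get around this, I would instead argue as follows. By hypothesis $\mac F_1$ is viral; fix a viral exponent $d_1$ for it. We want: {\em some} choice of orderings makes the ordered family $\{H'\}\cup(\tit{\mac F}\setminus\{H\})$ viral. Here the right tool is not \cref{loseorder} but the observation that being viral is a statement about {\em counts of induced copies}, and for ordered graphs the count of copies is at most the count of copies of the underlying unordered graph; combined with \cref{thm:orderedniki}, which gives the ordered analogue of Nikiforov, one can run the entire argument of \cref{axioms} directly. In other words, the clean route is: {\em prove \cref{unorderedaxioms} by repeating the proof of \cref{axioms} verbatim}, using \cref{thm:niki} in place of \cref{thm:orderedniki}, ordinary blockades in place of ordered ones, and deleting every reference to the order (the only place the order was genuinely used in \cref{axioms} was through \cref{lem:sparse2}, which invoked the leaf being an {\em end} vertex; but in the unordered setting a degree-one vertex needs no positional hypothesis, so the unordered analogues of \cref{lem:sparse1,lem:sparse2} hold with just "$v$ has degree one", and the rest of the proof of \cref{axioms} goes through unchanged).

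So the real plan has two parts: (i) state and prove the unordered analogues of \cref{lem:sparse1,lem:sparse2} — these are strictly easier, since one drops the ordered hypothesis and the initial segment $S$ can be taken to be an arbitrary large subset, with the sparse pair produced exactly as before using that $v$ is a leaf; then (ii) copy the proof of \cref{axioms} word for word, replacing \cref{thm:orderedniki} by \cref{thm:niki} and the ordered \cref{lem:sparse2} by its unordered version, to conclude that $(b,c)$ is a pair of divisive sidekicks for $\mac F$, hence $\mac F$ is divisive, hence viral by \cref{thm:divisive}. I expect the main obstacle to be purely bookkeeping: making sure the unordered versions of \cref{lem:sparse1,lem:sparse2} are stated with hypotheses that match what \cref{claim:res} and \cref{claim:sequence} need, and checking that nowhere in the proof of \cref{axioms} was the "end vertex" property used for anything other than feeding \cref{lem:sparse2}. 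Once that audit is done, the proof is mechanical. (Alternatively, and perhaps more slickly, one can deduce \cref{unorderedaxioms} from \cref{axioms} by choosing orders with $v_1,v_2$ at the ends and observing that the ordered families appearing as hypotheses in \cref{axioms} can be {\em chosen} to be viral whenever their underlying unordered families are, because the only ordered ingredient needed to prove their virality via \cref{thm:divisive} is \cref{thm:orderedniki}, which holds for all ordered graphs; I would present whichever of these two framings makes the cross-references shortest.)
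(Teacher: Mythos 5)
Your main plan is correct, and in fact the paper explicitly acknowledges it: its proof of \cref{unorderedaxioms} opens by saying the result ``can be proved directly, in the same way that we proved \cref{axioms}'', which is exactly your route (i)+(ii). Your audit is also accurate: in the proof of \cref{axioms} the order enters only through \cref{thm:orderedniki} (replaceable by \cref{thm:niki}) and through \cref{lem:sparse1,lem:sparse2}, where the end-vertex hypothesis is used solely to guarantee that the image of the leaf, taken outside the initial segment $S$, sits in the correct order position; dropping the order, a degree-one vertex needs no such positional condition, $S$ can be any set of $\lceil y|G|\rceil$ vertices, and the counting is unchanged, so the unordered analogues go through (and are slightly easier), after which \cref{thm:divisive}, \cref{lem:viraleqnt} and the rest of the argument are already stated for unordered graphs. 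What the paper actually writes out, however, is a different and shorter derivation from \cref{axioms} itself: it takes $\mac P$ to be the set of \emph{all} orderings of graphs in $\mac F$ (and $\mac P_1,\mac P_2$ likewise), notes that virality of an unordered family does transfer to the family of all its orderings (bounding copies of every ordering bounds copies of the underlying graph — this is the direction that works, unlike a single ordering), observes $\mac P_i\subseteq(\mac P\setminus\mac Q_i)\cup\mac Q_i'$ where $\mac Q_i$ consists of the orderings with $v_i$ at an end, and then applies \cref{axioms} repeatedly to restore the members of $\mac Q_1,\mac Q_2$, finishing with \cref{loseorder}. That route buys brevity (no new lemmas, only set bookkeeping), while yours buys a self-contained argument at the cost of restating and reproving two lemmas; both are valid. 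One caveat: your final parenthetical ``slicker alternative'' — fixing a single ordering with $v_1,v_2$ at the ends and claiming the resulting ordered families ``can be chosen to be viral whenever their underlying unordered families are'' — does not work as stated, for the reason you yourself identified earlier: an ordered graph may have few copies of one particular ordering of $H$ while its underlying graph has many copies of $H$, so virality does not pass from an unordered family to a single chosen ordering. The correct repair is precisely the paper's device of taking the family of \emph{all} orderings; you should either adopt that or delete the parenthetical and keep your direct proof.
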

	\begin{proof}
		This can be proved directly, in the same way that we proved \cref{axioms}, but it can also be derived from \cref{axioms}, 
		as follows.
		Let $\mac P$ be the set of all ordered graphs $J$ such that $J\nat\in \mac F$, and define
		$\mac P_1$ and $\mac P_2$ similarly. Thus $\mac P_1$ and $\mac P_2$ are viral. 
		Let $\mac Q_1$ be the set of all $J$ in $\mac P$ such that $J\nat=H_1$ and $v_1$ is an end vertex of $J$; and let $\mac Q_1'$
		be the set of all ordered graphs $J\setminus \{v_1\}$ where $J\in \mac Q_1$. 
		Let $\mac Q_2$ be the set of all $J\in \mac P$ such that  $J\nat=\overline{H_2}$ and $v_2$ is an end vertex of $J$; 
		and let $\mac Q_2'$
		be the set of all ordered graphs $J\setminus \{v_2\}$ where $J\in \mac Q_2$. Thus 
		$\mac P_i\subseteq (\mac P\setminus Q_i)\cup Q_i'$ for $i = 1,2$.
		Since $\mac P_i$ is viral, it follows that $(\mac P\setminus Q_i)\cup Q_i'$ is viral, for $i = 1,2$. By repeated application of \cref{axioms},
		it follows that $\mac P$ is viral; and hence $\mac F$ is viral, by \cref{loseorder}.
		This proves \cref{unorderedaxioms}.
	\end{proof}

	Since $C_5$ is viral (see~\cite{bfp2024}), we can use \cref{unorderedaxioms} to obtain viral pairs of prime 
	graphs that are not given by \cref{thm:mainunordered}. For instance, say a graph $H$ is 
	{\em five-unicyclic} if all its cycles have length five, and every component has at most one cycle.
	Then \cref{unorderedaxioms}
	implies that if $H_1,H_2$ are both five-unicyclic, then $\{H_1,\overline{H_2}\}$ is viral. Similarly, if we assume that $P_5$ 
	is viral, we can use \cref{unorderedaxioms} to prove the viral property
	of
	the prime six-vertex graph obtained from $P_5$ by adding a new vertex adjacent to all the                                    
	old vertices except one end of the path.

	There are also implications for tournaments. A {\em tournament} is a digraph such that for all distinct $uv$, exactly one of 
	$uv,vu$ is an edge. 
	For tournaments $T,Q$, a {\em copy} of $Q$ in $T$ is an isomorphism from $Q$ to an subtournament of $T$, and $T$ is {\em $Q$-free} if there is no copy of $Q$ in $T$.
	Let $\ind_Q(T)$ denote the number of copies of $Q$ in $T$.
	We say that $Q$ has the {\em \erh{} property} if there exists $c>0$ such that every tournament $T$ admitting no copy of 
	$Q$ contains a transitive subtournament on at least $|T|^c$ vertices; 
	and Alon, Pach, and Solymosi~\cite{MR1832443} proved that the Erd\H{o}s-Hajnal conjecture is equivalent to the statement that every tournament has the \erh{} property.
	Let $\mac Q$ be the family of tournaments defined as follows:
	\begin{itemize}
		\item $\mac Q$ is closed under taking vertex-substitutions; and
		
		\item if $Q$ is a tournament, and $v\in V(Q)$ has indegree at most one or out-degree at most one, and 
		$Q\setminus \{v\}\in \mac Q$, then $Q\in \mac Q$.
	\end{itemize}
	Let $Q$ be a tournament, and take a numbering of its vertex set, say $\{v_1\LL v_n\}$.
	Let $G$ be the graph with vertex set $V(Q)$ in which for $1\le i<j\le n$, $v_i,v_j$ are adjacent in $G$ if and only if $v_i$ 
	is adjacent from $v_j$ in $Q$. We call $G$ the {\em backedge graph} of $Q$, and if we order its vertex set by $v_i\le v_j$ 
	if $i\le j$,
	we obtain an ordered graph called the {\em backedge ordered graph}.
	It is easy to prove by induction (again, we omit the details) that:
	\begin{lemma}\label{lem:QtoL}
		$Q\in \mathcal{Q}$ if and only if there is a numbering of its vertex set such that the resulting backedge ordered graph is in $\mathcal{K}$.
	\end{lemma}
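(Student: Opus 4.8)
The plan is to prove both directions by induction on $|Q|$, exploiting the parallel recursive structure of $\mac Q$ (closed under vertex-substitution; add a vertex of in- or out-degree $\le 1$) and of $\mac K$ (closed under vertex-substitution; add an end vertex of degree $\le 1$). The key observation linking the two is a dictionary between operations on a tournament $Q$ and operations on a backedge ordered graph: if the vertices of $Q$ are numbered $v_1\LL v_n$ and we add a new vertex $v_{n+1}$ \emph{last} in the numbering, then in the backedge ordered graph $v_{n+1}$ becomes the new \emph{last} vertex, and its neighbours there are exactly those $v_i$ ($i\le n$) with $v_i$ adjacent \emph{from} $v_{n+1}$ in $Q$ — i.e.\ the in-neighbours of $v_{n+1}$; so ``$v_{n+1}$ has in-degree $\le 1$ in $Q$'' corresponds precisely to ``$v_{n+1}$ has degree $\le 1$ in the backedge ordered graph''. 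Dually, numbering a new vertex \emph{first} makes it the first vertex of the backedge ordered graph, with neighbour set its out-neighbours in $Q$, so out-degree $\le 1$ corresponds to degree $\le 1$ at the first vertex. For vertex-substitution the correspondence is immediate: substituting $Q_2$ for $v_i$ in $Q$ and numbering the copy of $V(Q_2)$ in the block occupied by $v_i$ yields exactly the ordered-graph vertex-substitution of the backedge ordered graph of $Q_2$ into that of $Q_1$ (this is really just the definition of ordered vertex-substitution, where the substituted block inherits the position of $v_i$).

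For the ``only if'' direction, suppose $Q\in\mac Q$. If $|Q|\le 1$ it is trivial. If $Q$ is obtained by substituting $Q_2$ for a vertex of $Q_1$ with $Q_1,Q_2$ smaller, apply induction to get numberings of $Q_1,Q_2$ whose backedge ordered graphs lie in $\mac K$, and combine them as above to get a numbering of $Q$ whose backedge ordered graph is the corresponding ordered vertex-substitution, hence in $\mac K$. If instead $Q$ has a vertex $v$ with in-degree $\le 1$ (the out-degree case is symmetric via the ``first'' end) and $Q\setminus\{v\}\in\mac Q$, take by induction a numbering of $Q\setminus\{v\}$ with backedge ordered graph in $\mac K$, append $v$ at the end of the numbering; by the dictionary $v$ becomes a last vertex of degree $\le 1$ in the backedge ordered graph, so the result is in $\mac K$.

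For the ``if'' direction, suppose a numbering of $Q$ gives a backedge ordered graph $G\in\mac K$; we induct on $|Q|=|G|$. If $|G|\le 1$ this is trivial. Since $G\in\mac K$, either $G$ is not prime, or it has an end vertex of degree $\le 1$. If $G$ is obtained by ordered vertex-substitution of $G_2$ into $G_1$ with both smaller, then $G_1,G_2$ are themselves induced ordered subgraphs of $G$, hence in $\mac K$ (as $\mac K$ is closed under induced ordered subgraphs — this follows from its recursive definition), and each is the backedge ordered graph of the corresponding subtournament $Q_i$ of $Q$ under the induced numbering; by induction $Q_1,Q_2\in\mac Q$, and $Q$ is the vertex-substitution of $Q_2$ into $Q_1$, so $Q\in\mac Q$. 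Otherwise $G$ has an end vertex $v$ of degree $\le 1$: if $v$ is the last vertex, then $v=v_n$ in the numbering and has in-degree $\le 1$ in $Q$; the backedge ordered graph of $Q\setminus\{v_n\}$ (with the inherited numbering) is $G\setminus\{v\}\in\mac K$, so by induction $Q\setminus\{v_n\}\in\mac Q$, hence $Q\in\mac Q$. If $v$ is the first vertex, it has out-degree $\le 1$ in $Q$ and the same argument applies. This closes the induction.

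The routine but slightly fiddly point I would expect to spell out is the verification that $\mac K$ is closed under taking induced ordered subgraphs, and more precisely that a vertex-substitution decomposition $G=G_1$-with-$G_2$-substituted yields $G_1,G_2\in\mac K$: one should check this is forced by the minimality/recursive form of the definition of $\mac K$ (the non-prime case of membership in $\mac K$ already presupposes such a decomposition into members of $\mac K$, and distinct decompositions into modules can be reconciled via the modular-decomposition tree). The genuinely substantive step — and the one place a reader might want care — is the dictionary between end-vertex additions in $\mac K$ and low-degree vertex additions in $\mac Q$, i.e.\ correctly matching ``in-degree $\le 1$ at a vertex numbered last'' with ``degree $\le 1$ at the last vertex of the backedge ordered graph'' (and the first/out-degree dual), since an off-by-one in the direction convention of the backedge graph would break the whole correspondence. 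Everything else is bookkeeping, which is why the paper omits it.
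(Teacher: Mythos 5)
Your overall strategy --- a double induction mirroring the recursive definitions of $\mac Q$ and $\mac K$, matching tournament vertex-substitution with ordered vertex-substitution and low-degree vertex additions with end-vertex additions --- is exactly the induction the paper has in mind (the paper omits the details). However, your dictionary has the direction of the backedge graph backwards, which is precisely the off-by-one you warned against. By the paper's definition, for $i<j$ the pair $v_iv_j$ is an edge of the backedge graph if and only if $v_i$ is adjacent \emph{from} $v_j$, i.e.\ $Q$ has the edge $v_j\rightarrow v_i$. Hence the neighbours of the last-numbered vertex in the backedge ordered graph are its \emph{out}-neighbours in $Q$ (not its in-neighbours, as you state), so ``last vertex of degree at most one'' corresponds to out-degree at most one, and dually the backedge-degree of the first vertex is its in-degree. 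As literally written, your ``only if'' step appends a vertex $v$ of in-degree at most one at the end of the numbering; its degree in the backedge ordered graph is then its out-degree, which may be large, so the resulting ordered graph need not arise from the end-vertex operation of $\mac K$.

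The error does not sink the argument, because both classes are symmetric under the relevant swap: $\mac Q$ permits adding a vertex of in-degree at most one \emph{or} of out-degree at most one, and $\mac K$ permits a degree-at-most-one vertex at either end of the order. The repair is simply to place an in-degree-$\le 1$ vertex first and an out-degree-$\le 1$ vertex last in the ``only if'' direction, and in the ``if'' direction to read off out-degree at most one when the low-degree end vertex is last and in-degree at most one when it is first; in either case $Q\setminus\{v\}\in\mac Q$ gives $Q\in\mac Q$, since $\mac Q$ accepts both kinds of additions. With that correction, and with the substitution step verified as you indicate (an order-interval that is a module of $G\nat$ is a module of $Q$, because every outside vertex lies entirely before or entirely after the interval), your proof is complete and coincides with the paper's intended argument; note also that closure of $\mac K$ under induced ordered subgraphs, which you flag as the fiddly point, is immediate from the hereditary form of the definition of $\mac K$ given in Section~\ref{sec:sketch}, so no appeal to modular decomposition is needed.
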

	Then we can prove the tournament analogue of \cref{thm:eh}, which says:
	\begin{theorem}\label{tourEH}
		Every tournament in $\mac Q$ has the \erh{} property.
	\end{theorem}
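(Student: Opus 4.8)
The plan is to derive \cref{tourEH} from the ordered-graph result \cref{thm:orderedsingle} via the backedge ordered graph construction, exactly as \cref{thm:mainunordered} is derived from \cref{thm:mainorderedpair}. The key observation, which is \cref{lem:QtoL}, is that every $Q\in\mac Q$ has a numbering of its vertex set whose backedge ordered graph $B$ lies in $\mac K$; and since $\overline{B}$ is obtained by reversing the linear order and complementing (both of which preserve membership in $\mac K$ — this is the symmetry built into the definition), in fact $B,\overline B\in\mac K$, so $B\in\mac L$ and hence $B$ is viral by \cref{thm:orderedsingle}.

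First I would set up the correspondence between transitive subtournaments of $T$ and cliques or stable sets in an associated ordered graph. Given a tournament $T$, fix a numbering of $V(T)$ and let $G$ be its backedge ordered graph, so $G$ is an ordered graph on the same vertex set with $v_i\le_G v_j$ iff $i\le j$, and $v_iv_j\in E(G)$ (for $i<j$) iff $v_j\to v_i$ in $T$. A set $S\subseteq V(T)$ is transitive in $T$ if and only if, listing $S$ as $v_{i_1}<\cdots<v_{i_k}$, all backedges within $S$ point the same way — that is, $S$ induces either a clique or a stable set in $G$. (More precisely: the numbering restricted to a transitive set is, after reversal if needed, a topological order, so all backedges are present or all absent.) Thus a clique or stable set of size $m$ in $G$ yields a transitive subtournament of $T$ on $m$ vertices.

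Second, I would invoke virality of $B$ to get a polynomial-sized clique or stable set. Let $Q\in\mac Q$, fix the good numbering from \cref{lem:QtoL} so that its backedge ordered graph $B$ lies in $\mac L$ and is therefore viral; let $d$ be a viral exponent for $\{B\}$. Suppose $T$ is a tournament with no copy of $Q$. Build the backedge ordered graph $G$ of $T$ (using \emph{any} numbering of $V(T)$). A copy of $Q$ in $T$ would, under a suitable numbering, produce a copy of $B$ in $G$; but one must be careful, since the numbering of $V(T)$ is fixed in advance and need not match up with $Q$'s good numbering on each subset. The clean way around this is: if $G$ contains a copy of $B$, then $T$ contains a copy of $Q$ (because the backedge ordered graph of a subtournament, under the induced numbering, is the induced ordered subgraph of $G$, and $B$ determines $Q$). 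So $G$ is $B$-free, hence $\mu_{\{B\}}(\vep^d,G)\le 1$ trivially for all $\vep$. Applying virality with $\vep$ a small negative power of $|G|$ — precisely, choosing $\vep$ so that $\vep^d|G|\ge|G|^{c}$ for a suitable $c$ depending only on $d$ — yields an $\vep$-restricted set $S\subseteq V(G)$ of size at least $|G|^{c}$. An $\vep$-restricted ordered graph of order $n$ with $\vep n<1$, i.e.\ with maximum degree $0$ in $G$ or in $\overline G$, is a clique or a stable set; taking $n^{-\vep}$ appropriately small (another negative power of $|G|$, absorbed into $c$), we get that $S$ contains a clique or stable set of size $|G|^{c'}$ in $G$, hence a transitive subtournament of $T$ of that size. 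This is exactly the reduction by which $H$-free plus polynomial R\"odl implies Erd\H os--Hajnal, noted after \cref{conj:polyrodl}.

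The main obstacle, and the only genuinely delicate point, is verifying the dictionary between the tournament world and the ordered-graph world carefully — namely \cref{lem:QtoL} and the precise statement that copies of $Q$ in $T$ correspond to copies of $B$ in $G$ regardless of the fixed numbering of $V(T)$. The subtlety is that $\mac Q$ is closed under vertex-substitution and under adding a vertex of in- or out-degree $\le 1$, and one must check that each such operation, performed on $Q$, can be mirrored by the corresponding operation ($\mac K$'s vertex-substitution, or adding a degree-$\le 1$ vertex at an end of the order) on an appropriate backedge ordered graph — and conversely. This is a routine induction on $|Q|$, using that adding a source/sink-like vertex of degree $\le 1$ in the tournament corresponds to adding a degree-$\le 1$ vertex at the first or last position of the backedge order, and that the ``direction'' symmetries in the definitions of $\mac K$ and $\mac L$ match the four cases (in- vs out-degree $\le 1$, and which end). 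Once \cref{lem:QtoL} is in hand, everything else is the standard $\vep$-to-polynomial bookkeeping, and \cref{tourEH} follows. (The same argument, run with \cref{thm:mainorderedpair} in place of \cref{thm:orderedsingle}, gives the stronger \cref{thm:buildtour}: a buildable tournament $H$ has backedge ordered graph in $\mac K$, so $\{B,\overline B\}$ — equivalently the single ordered graph $B\in\mac L$ — is viral, and the $B$-free hypothesis on $G$ suffices.)
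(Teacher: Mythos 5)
There is a genuine gap at the very first step of your reduction. You claim that since the backedge ordered graph $B$ of the good numbering lies in $\mac K$, and since ``reversing the linear order and complementing'' preserve membership in $\mac K$, we get $B,\overline B\in\mac K$ and hence $B\in\mac L$, so that \cref{thm:orderedsingle} applies and $B$-freeness of $G$ alone suffices. This is false: $\mac K$ is \emph{not} closed under complementation --- its defining condition asks for an end vertex of degree at most one in $G\nat$, not in the complement --- and reversing-then-complementing does not produce $\overline B$ but rather the reversal of $\overline B$ (equivalently $\overline{B'}$, where $B'$ is the reversal of $B$), which is a different ordered graph and in general also fails to lie in $\mac K$. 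A concrete counterexample: the tournament $Q$ on five vertices with $v_{i+1}\rightarrow v_i$ and $v_i\rightarrow v_j$ for $j\ge i+2$ is in $\mac Q$, and its backedge ordered graph $B$ under this numbering is the monotone path on five vertices, which is in $\mac K$; but $\overline B$ has underlying graph $\overline{P_5}$, which is prime and has minimum degree two, so it is not in $\mac K$ under any order. Hence $B\notin\mac L$, \cref{thm:orderedsingle} does not apply, and your single-graph argument (as well as the parenthetical remark at the end, which repeats the same claim for \cref{thm:buildtour}) breaks down for such $Q$.

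The repair is exactly the paper's argument, and you already have the needed ingredient in hand: a copy of \emph{any} backedge ordered graph of $Q$ in the backedge ordered graph $P$ of $T$ yields a copy of $Q$ in $T$, so $P$ avoids \emph{all} backedge ordered graphs of $Q$, not just $B$. The paper takes $H=B$ and $H'=$ the reversal of $B$ (both in $\mac K$, since $\mac K$ is closed under reversal, though not under complementation), notes that $\overline{H'}$ is the backedge ordered graph of $Q$ under the reversed numbering, and applies the pair theorem \cref{thm:mainorderedpair} to $\{H,\overline{H'}\}$; since $P$ is both $H$-free and $\overline{H'}$-free, it has a polynomial clique or stable set, which is a transitive set of $T$. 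Your remaining steps are essentially fine, with one small caveat: your ``if and only if'' between transitive sets of $T$ and cliques/stable sets of $P$ is not correct for an arbitrary numbering (a transitive set need not induce a clique or stable set in the backedge graph), but only the direction you actually use --- a clique or stable set in $P$ is a transitive set of $T$ --- is needed, and that direction is correct.
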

	\begin{proof} Let $Q\in \mac Q$, and take a numbering of $V(Q)$ as in \cref{lem:QtoL}. Let $H\in \mathcal{K}$ be the 
		resulting backedge ordered graph. Let $H'$ be the ordered graph obtained from $H$ by reversing the linear order. 
		It follows that $H'\in \mathcal{K}$, since $\mathcal{K}$ is closed under taking reversals. Moreover, $\overline{H'}$ is also 
		a backedge ordered graph of $Q$, obtained from the reversed ordering.
		
		By \cref{thm:mainorderedpair}, there exists $c>0$
		such that if an ordered graph $G$ is both $H$-free  and $\overline{H'}$-free, then there is a clique or stable set of size at least $|G|^c$ in $G$.
		Let $T$ be a $Q$-free tournament, take a numbering of its vertex set, and 
		let $P$ be the resulting backedge ordered graph. Then $P$ is $J$-free for every backedge ordered graph of $Q$, because $T$ is $Q$-free;
		and in particular,
		$P$ is both $H$-free and $\overline{H'}$-free. Hence there is a clique or stable set $S$ in $P$ of size at least $|Q|^c$. But then 
		$S$ is a transitive set of $T$. This proves \cref{tourEH}.
	\end{proof}
	\section*{Acknowledgements}
	We are grateful to the anonymous referees for helpful comments and suggestions.
	
	\bibliographystyle{abbrv}

\end{document}